\documentclass[11pt]{amsart}

\usepackage{subcaption}
\usepackage{pdfsync}
\usepackage{amsthm}
\usepackage{amsmath}
\usepackage{amssymb}
\usepackage{mathtools}
\usepackage{array}
\usepackage{tabularx}
\usepackage{bm}


\usepackage{verbatim}
\usepackage{geometry}

\usepackage[shortlabels]{enumitem}
\setlist{nosep}
\newcommand{\eps}{\ensuremath{\varepsilon}}

\newcommand{\Prob}[1]{\ensuremath{%
    \mathbb P\left[#1\right]
  }}
\newcommand{\Expect}[1]{\ensuremath{%
    \mathbb E\left[#1\right]
  }}
  
 \newcommand{\myArc}[2]{#1#2}

\usepackage[T1]{fontenc}
\usepackage{amssymb}
\usepackage{amsmath}
\usepackage{amsthm}
\usepackage{fancyhdr}
\usepackage{mathtools}
\usepackage[british]{babel}
\usepackage{geometry}
\usepackage{enumitem}
\usepackage{algpseudocode}
\usepackage{dsfont}
\usepackage{centernot}
\usepackage{xstring}
\usepackage{colortbl}
\usepackage[section]{algorithm}
\usepackage{graphicx}
\usepackage[font={footnotesize}]{caption}
\usepackage[usenames,dvipsnames,table]{xcolor}
\usepackage{pstricks,tikz}
\definecolor{mypink}{RGB}{0,110,0}
\definecolor{mypurple}{RGB}{130,0,130}
\definecolor{mygreen}{RGB}{0,222,0}
\definecolor{myblue}{RGB}{0,235,255}
\definecolor{dblue}{RGB}{0,170,190}
\definecolor{dpink}{RGB}{240,40,100}
\definecolor{forest}{RGB}{0,110,0}

\usetikzlibrary{calc}
\usetikzlibrary{shapes}
\usetikzlibrary{snakes}
\usetikzlibrary{arrows.meta}

\usepackage{xcolor}
\usepackage[h]{esvect}
\usepackage[noadjust]{cite}
\usepackage[
		bookmarksopen=true,
		bookmarksopenlevel=1,
		colorlinks=true,
		linkcolor=darkblue,
        linktoc=page,
		citecolor=darkblue,
]{hyperref}

\title[]{Hamilton transversals in random Latin squares}
\date{\today}

\author[Stephen~Gould]{Stephen~Gould}
\email{
    \{spg377,t.j.kelly\}@bham.ac.uk}

\author[Tom~Kelly]{Tom~Kelly}

\address{School of Mathematics, University of Birmingham,
Edgbaston, Birmingham, B15 2TT, United Kingdom}

\thanks{The second author is partially supported by the EPSRC, grant no. EP/N019504/1.}

\geometry{a4paper,tmargin=2.4cm,bmargin=2.5cm,lmargin=2.5cm,rmargin=2.5cm,headheight=1cm,headsep=1cm,footskip=1cm}

\hyphenation{quasi-random sub-graph di-graph mul-ti-graph pro-ba-bi-lis-tic ver-ti-ces}

\newtheorem{theorem}{Theorem}[section]
\newtheorem{prop}[theorem]{Proposition}
\newtheorem{lemma}[theorem]{Lemma}

\newtheorem{conj}[theorem]{Conjecture}

\theoremstyle{definition}

\newtheorem{defin}[theorem]{Definition}

\newtheoremstyle{claimstyle}{5pt}{5pt}{\em}{5pt}{\em}{:}{5pt}{}
\theoremstyle{claimstyle}
\newtheorem{claim}{Claim}

\newtheoremstyle{stepstyle}{10pt}{5pt}{\em}{0pt}{\em}{:}{5pt}{}
\theoremstyle{stepstyle}

\numberwithin{equation}{section}

\definecolor{darkblue}{rgb}{0,0,0.5}

\def\noproof{{\unskip\nobreak\hfill\penalty50\hskip2em\hbox{}\nobreak\hfill%
       $\square$\parfillskip=0pt\finalhyphendemerits=0\par}\goodbreak}
\def\endproof{\noproof\bigskip}

\def\noclaimproof{{\unskip\nobreak\hfill\penalty50\hskip2em\hbox{}\nobreak\hfill%
       $-$\parfillskip=0pt\finalhyphendemerits=0\par}\goodbreak}

\def\endclaimproof{\noclaimproof\medskip}

\newdimen\margin
\def\textno#1&#2\par{
   \margin=\hsize
   \advance\margin by -4\parindent
          \setbox1=\hbox{\sl#1}
   \ifdim\wd1 < \margin
      $$\box1\eqno#2$$
   \else
      \bigbreak
      \hbox to \hsize{\indent$\vcenter{\advance\hsize by -3\parindent
      \it\noindent#1}\hfil#2$}
      \bigbreak
   \fi}


\def\lateproof#1{\removelastskip\penalty55\medskip\noindent\setcounter{claim}{0}\setcounter{step}{0}{\bf Proof of #1. }} 

\DeclareMathOperator{\comp}{comp}
\def\claimproof{\removelastskip\penalty55\medskip\noindent{\em Proof of claim: }}

\begin{document}

\newcommand{\new}[1]{\textcolour{red}{#1}}
\def\COMMENT#1{}
\def\TASK#1{}
\let\TASK=\footnote             
\newcommand{\APPENDIX}[1]{}
\newcommand{\NOTAPPENDIX}[1]{#1}
\renewcommand{\APPENDIX}[1]{#1}                    
\renewcommand{\NOTAPPENDIX}[1]{}                   
\newcommand{\todo}[1]{\begin{center}\textbf{to do:} #1 \end{center}}
\def\tomOPT#1{{}{\sf\textcolor{magenta}{Tom: #1}}}
\def\tom#1{}
\let\tom=\tomOPT 
\def\stephenOPT#1{{}{\sf\textcolor{mygreen}{Stephen: #1}}}
\def\stephen#1{}
\let\stephen=\stephenOPT 

\def\eps{{\varepsilon}}
\newcommand{\ex}{\mathbb{E}}
\newcommand{\pr}{\mathbb{P}}
\newcommand{\cB}{\mathcal{B}}
\newcommand{\cA}{\mathcal{A}}
\newcommand{\cE}{\mathcal{E}}
\newcommand{\cS}{\mathcal{S}}
\newcommand{\cF}{\mathcal{F}}
\newcommand{\cG}{\mathcal{G}}
\newcommand{\bL}{\mathbb{L}}
\newcommand{\bF}{\mathbb{F}}
\newcommand{\bZ}{\mathbb{Z}}
\newcommand{\cH}{\mathcal{H}}
\newcommand{\cC}{\mathcal{C}}
\newcommand{\cM}{\mathcal{M}}
\newcommand{\bN}{\mathbb{N}}
\newcommand{\bR}{\mathbb{R}}
\def\O{\mathcal{O}}
\newcommand{\cP}{\mathcal{P}}
\newcommand{\cQ}{\mathcal{Q}}
\newcommand{\cR}{\mathcal{R}}
\newcommand{\cJ}{\mathcal{J}}
\newcommand{\cL}{\mathcal{L}}
\newcommand{\cK}{\mathcal{K}}
\newcommand{\cD}{\mathcal{D}}
\newcommand{\cI}{\mathcal{I}}
\newcommand{\cV}{\mathcal{V}}
\newcommand{\cT}{\mathcal{T}}
\newcommand{\cU}{\mathcal{U}}
\newcommand{\cW}{\mathcal{W}}
\newcommand{\cX}{\mathcal{X}}
\newcommand{\cY}{\mathcal{Y}}
\newcommand{\cZ}{\mathcal{Z}}
\newcommand{\1}{{\bf 1}_{n\not\equiv \delta}}
\newcommand{\eul}{{\rm e}}
\newcommand{\Erd}{Erd\H{o}s}
\newcommand{\cupdot}{\mathbin{\mathaccent\cdot\cup}}
\newcommand{\whp}{whp }
\newcommand{\bX}{\mathcal{X}}
\newcommand{\bV}{\mathcal{V}}
\newcommand{\ordsubs}[2]{(#1)_{#2}}
\newcommand{\unordsubs}[2]{\binom{#1}{#2}}
\newcommand{\ordelement}[2]{\overrightarrow{\mathbf{#1}}\left({#2}\right)}
\newcommand{\ordered}[1]{\overrightarrow{\mathbf{#1}}}
\newcommand{\reversed}[1]{\overleftarrow{\mathbf{#1}}}
\newcommand{\weighting}[1]{\mathbf{#1}}
\newcommand{\weightel}[2]{\mathbf{#1}\left({#2}\right)}
\newcommand{\unord}[1]{\mathbf{#1}}
\newcommand{\ordscript}[2]{\ordered{{#1}}_{{#2}}}
\newcommand{\revscript}[2]{\reversed{{#1}}_{{#2}}}
\newcommand{\dirK}{\overleftrightarrow{K_{n}}}

\newcommand{\doublesquig}{%
  \mathrel{%
    \vcenter{\offinterlineskip
      \ialign{##\cr$\rightsquigarrow$\cr\noalign{\kern-1.5pt}$\rightsquigarrow$\cr}%
    }%
  }%
}

\newcommand{\defn}{\emph}

\newcommand\restrict[1]{\raisebox{-.5ex}{$|$}_{#1}}

\newcommand{\probfc}[1]{\mathrm{\mathbb{P}}_{F^{*}}\left[#1\right]}
\newcommand{\probd}[1]{\mathrm{\mathbb{P}}_{D}\left[#1\right]}
\newcommand{\probf}[1]{\mathrm{\mathbb{P}}_{F}\left[#1\right]}
\newcommand{\prob}[1]{\mathrm{\mathbb{P}}\left[#1\right]}
\newcommand{\probb}[1]{\mathrm{\mathbb{P}}_{b}\left[#1\right]}
\newcommand{\expn}[1]{\mathrm{\mathbb{E}}\left[#1\right]}
\newcommand{\expnb}[1]{\mathrm{\mathbb{E}}_{b}\left[#1\right]}
\newcommand{\probxj}[1]{\mathrm{\mathbb{P}}_{x(j)}\left[#1\right]}
\newcommand{\expnxj}[1]{\mathrm{\mathbb{E}}_{x(j)}\left[#1\right]}
\newcommand{\neigh}[3]{N^{#1}_{#2}\left(#3\right)}
\newcommand{\neighset}[3]{N^{#1}_{#2}\left(#3\right)}
\newcommand{\vhead}[1]{\textrm{head}\left(#1\right)}
\newcommand{\vtail}[1]{\textrm{tail}\left(#1\right)}
\def\gnp{G_{n,p}}
\def\G{\mathcal{G}}
\def\lflr{\left\lfloor}
\def\rflr{\right\rfloor}
\def\lcl{\left\lceil}
\def\rcl{\right\rceil}

\newcommand{\qbinom}[2]{\binom{#1}{#2}_{\!q}}
\newcommand{\binomdim}[2]{\binom{#1}{#2}_{\!\dim}}

\newcommand{\grass}{\mathrm{Gr}}

\newcommand{\brackets}[1]{\left(#1\right)}
\def\sm{\setminus}
\newcommand{\Set}[1]{\{#1\}}
\newcommand{\set}[2]{\{#1\,:\;#2\}}
\newcommand{\krq}[2]{K^{(#1)}_{#2}}
\newcommand{\ind}[1]{$\mathbf{S}(#1)$}
\newcommand{\indcov}[1]{$(\#)_{#1}$}
\def\In{\subseteq}

%

\begin{abstract}
Gy\'{a}rf\'{a}s and S\'{a}rk\"{o}zy conjectured that every $n\times n$ Latin square has a `cycle-free' partial transversal of size~$n-2$.
We confirm this conjecture in a strong sense for almost all Latin squares, by showing that as $n \rightarrow \infty$, all but a vanishing proportion of $n\times n$ Latin squares have a \textit{Hamilton} transversal, i.e.\ a full transversal for which any proper subset is cycle-free.  In fact, we prove a counting result that in almost all Latin squares, the number of Hamilton transversals is essentially that of Taranenko's upper bound on the number of full transversals. This result strengthens a result of Kwan (which in turn implies that almost all Latin squares also satisfy the famous Ryser-Brualdi-Stein conjecture).
\end{abstract}
\maketitle
\section{Introduction}
\subsection{Transversals in Latin squares}
An~\textit{$n\times n$ Latin square} is an arrangement of~$n$ symbols into~$n$ rows and~$n$ columns, such that each row and each column contains precisely one instance of each symbol.
A \textit{(full) transversal} in an~$n\times n$ Latin square is a collection of~$n$ positions of the Latin square that use each row, column, and symbol exactly once, and a \textit{partial transversal} is a collection of at most~$n$ positions not using any row, column, or symbol more than once.
The most famous open problem on the topic of transversals in Latin squares is the following.
\begin{conj}[Ryser, Brualdi, and Stein~\cite{BR91,R67, S75}]\label{conj:rbs}
All~$n\times n$ Latin squares have a partial transversal of size~$n-1$.
\end{conj}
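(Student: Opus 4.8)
This is a famous open problem, so I can only sketch the strategy I would consider most likely to succeed; it is based on the absorption method. The first step is a reformulation: an $n\times n$ Latin square $L$ is equivalent to a proper edge-colouring of $K_{n,n}$ by $n$ colours, with rows and columns the two vertex classes and symbols the colours, and under this correspondence a partial transversal of size $k$ is exactly a \emph{rainbow matching} with $k$ edges, i.e.\ a matching whose edges all receive distinct colours. Conjecture~\ref{conj:rbs} is thus equivalent to the assertion that every proper $n$-edge-colouring of $K_{n,n}$ contains a rainbow matching of size $n-1$; the value $n-1$ cannot be improved because the Cayley table of $\bZ_n$ with $n$ even has no rainbow perfect matching.

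\textbf{Step 1 (a robust absorber).} Inside a random ``reservoir'' of $\Theta(n)$ rows, columns and symbols I would build a distributive-absorption structure $A$ --- a large linked family of short alternating/augmenting rainbow configurations --- with the property that for every sufficiently small balanced family $W$ of leftover rows, columns and symbols, the edges of $A$ can be rerouted into a rainbow matching covering precisely $W$ together with the fixed footprint of $A$. \emph{This is the main obstacle.} A rainbow matching carries a global constraint (no colour repeated), so in contrast to the uncoloured matching setting the absorber has to absorb leftover \emph{vertices and colours at once} and remain usable no matter which colours turn out to be deficient; one therefore needs a two-tier absorber --- one layer swallowing leftover vertices, a second layer (together with a colour-switching argument) repairing colour defects --- and establishing its robustness is the technical heart of the argument.

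\textbf{Step 2 (almost-spanning rainbow matching on the rest).} On $K_{n,n}$ with the reservoir removed I would find, by a semi-random ``nibble'', a rainbow matching missing only $o(n)$ vertices and $o(n)$ colours: repeatedly take a sparse random rainbow matching, use concentration (Azuma/Talagrand, plus a Rödl-nibble analysis adapted to proper colourings, exploiting that each colour class has size exactly $n$ and so behaves like a perfect matching) to show that the uncovered vertices and colours stay quasirandom, and iterate until the leftover has size $o(n)$.

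\textbf{Step 3 (combine).} Feed the $o(n)$-size leftover $W$ of Step~2 into the absorber $A$; it swallows all of $W$ except possibly a single element --- this ``$-1$'' of slack is exactly what is needed to meet the parity/balance obstruction witnessed by the even-order cyclic group --- and the union is a rainbow matching with $n-1$ edges. The delicate part is making the error terms from Step~2 mesh with the absorbability window of Step~1, in particular tracking \emph{which} colours are exhausted during the nibble so that the surviving colours fall within the flexibility range of the absorber; this accounting, together with the robustness proof in Step~1, is where essentially all of the work lies.
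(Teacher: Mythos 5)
This statement is an open conjecture; the paper does not prove it, and neither does your proposal. What the paper actually establishes (Theorem~\ref{main-thm}) is that \emph{almost all} $n\times n$ Latin squares satisfy Conjecture~\ref{conj:rbs} — indeed in the much stronger form of having exponentially many Hamilton transversals — and you should be careful not to present a strategy sketch for the worst case as if it were a proof. Your reformulation (partial transversals of $L$ $\leftrightarrow$ rainbow matchings in a properly $n$-edge-coloured $K_{n,n}$) is correct, and the three-step outline (robust distributive absorber, semi-random nibble to an almost-spanning rainbow matching, absorption of the $o(n)$ leftover) is essentially the architecture used in the probabilistic setting by Kwan~\cite{K20} and by this paper. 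But the two places you flag as ``the main obstacle'' and ``where essentially all of the work lies'' are not technical details to be filled in later; they are precisely the points at which every known approach to the full conjecture breaks down, which is why the best unconditional bound remains $n-O(\log n/\log\log n)$~\cite{KPSY20}.

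Concretely: the absorber constructions in this paper (Sections~\ref{switchsection} and~\ref{section:absorption}) require the host Latin square to contain large \emph{well-spread} collections of absorbing and bridging gadgets, and this is proved only \emph{with high probability} for a uniformly random Latin square, via switchings in Latin rectangles. An adversarial Latin square need not contain these local substructures at all (compare: there exist Latin squares of every large order with no intercalates whatsoever), so Step~1 of your plan has no worst-case foundation. Likewise, Step~2's requirement that the nibble leave a leftover small enough, and with colour deficiencies distributed favourably enough, to fall inside the absorber's flexibility window is exactly the quantitative gap between $n-O(\log n/\log\log n)$ and $n-1$. Finally, the claim that the absorber ``swallows all of $W$ except possibly a single element,'' with the $-1$ of slack magically matching the parity obstruction of $\bZ_n$ for even $n$, is asserted without any mechanism. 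As written, the proposal is a reasonable description of why the probabilistic version is tractable, not a proof of the conjecture.
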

Conjecture~\ref{conj:rbs} would be best-possible, because for even~$n$ the addition table of the integers modulo~$n$ is a Latin square which has no transversal.
If~$n$ is odd, it is actually conjectured that all~$n \times n$ Latin squares have a full transversal. 
For nearly forty years the best result towards Conjecture~\ref{conj:rbs} was the theorem of Hatami and Shor~\cite{HS08, Shor82} (improving~\cite{W78, BdVW78}) that all~$n\times n$ Latin squares 
have a partial transversal of size $n-O(\log^{2}n)$.
Recently however, Keevash, Pokrovskiy, Sudakov, and Yepremyan~\cite{KPSY20} improved the error term to~$O(\log n/\log \log n)$.

Conjecture~\ref{conj:rbs} is related to the following conjecture of Andersen~\cite{A89}.  An edge-coloured graph is \textit{rainbow} if all of its edges have different colours, and an edge-colouring is \textit{proper} if no two edges of the same colour share a vertex.
\begin{conj}[Andersen~\cite{A89}]\label{conj:and}
All proper edge-colourings of~$K_{n}$, the complete graph on $n$ vertices, admit a rainbow path of length~$n-2$.  
\end{conj}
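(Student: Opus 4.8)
The plan is to attack Conjecture~\ref{conj:and} via the absorption method, reducing the problem of finding a near-spanning rainbow path to that of finding an \emph{almost}-spanning one together with a flexible ``absorbing'' structure that can swallow the few leftover vertices. Concretely, given a proper edge-colouring $\phi$ of $K_n$, one first reserves a sparse collection of \emph{absorbers}: short rainbow paths $A_v$ (many for each vertex $v$), each having the property that, for its endpoints $x,y$, there is also a rainbow $x$--$y$ path on $V(A_v)\cup\{v\}$ using a palette compatible with $A_v$. Chaining such gadgets into the final path, with all their colours drawn from a small set of colours set aside in advance, lets one splice an arbitrary set of $o(n)$ leftover vertices into the path one at a time while preserving the rainbow condition. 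The existence of enough absorbers, and the ability to connect them up using only a few reserved rainbow edges, would be established by first- and second-moment estimates on the number of suitable gadgets in $\phi$, exploiting that every colour class is a near-perfect matching.

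With the absorbers and their reserved colours put aside, the remaining task is to build the bulk: a rainbow path $P$ covering all but $o(n)$ vertices and missing only $o(n)$ colours. This is the regime where partial-transversal and rainbow-matching technology already succeeds --- one can obtain such a $P$ either by a semi-random (``nibble'') process that greedily extends a rainbow path while tracking available-colour degrees at the two ends, or by invoking near-optimal rainbow matching/path results (Conjecture~\ref{conj:rbs} is known up to an $O(\log n/\log\log n)$ error by Keevash--Pokrovskiy--Sudakov--Yepremyan~\cite{KPSY20}, with analogous statements for near-spanning rainbow paths). One then absorbs the remaining vertices using the reserved structure and finishes, conceding up to two vertices for the same parity/colour-count reasons that make $n-2$ best possible (the $\mathbb Z_n$-type colourings admit no transversal), to obtain a rainbow path of length $n-2$.

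The hard part --- and the reason Conjecture~\ref{conj:and} remains open --- is precisely the first step: in an \emph{arbitrary} proper edge-colouring the absorbers need not exist in sufficient quantity. Colourings close to the rigid algebraic ones (the Cayley table of $\mathbb Z_n$ and its relatives) admit very few flexible rainbow gadgets, so the absorption scheme degenerates exactly on the extremal examples. A complete proof therefore seems to require a stability dichotomy: either $\phi$ is quantitatively far from every such algebraic colouring, in which case absorbers are abundant and the plan above goes through, or $\phi$ is close to one, in which case one must analyse the near-extremal structure directly and construct the path essentially by hand. It is this near-extremal case that has resisted all attempts, and it is the reason the present paper instead proves the conclusion for almost all Latin squares (equivalently, almost all $1$-factorizations), where such pathological colourings occur with vanishing probability.
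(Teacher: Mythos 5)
The statement you are addressing is Conjecture~\ref{conj:and} (Andersen's conjecture), which is an \emph{open conjecture}: the paper does not prove it, and only cites it as motivation. So there is no ``paper's own proof'' to compare against, and your proposal — as you yourself acknowledge in your final paragraph — is not a proof but a research programme with a named, unresolved obstruction. The genuine gap is exactly where you locate it: the absorption method requires a plentiful, well-spread supply of flexible rainbow gadgets in an \emph{arbitrary} proper edge-colouring of $K_n$, and no argument is known that produces them without some randomness or quasirandomness hypothesis on the colouring. Colourings close to the Cayley table of $\mathbb{Z}_n$ (and other algebraic $1$-factorizations) are precisely where such gadgets become scarce, and no stability dichotomy of the kind you describe has been established. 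Your first- and second-moment heuristic for counting gadgets is valid in a \emph{random} colouring but says nothing about a worst-case one, which is the whole content of the conjecture.

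Two further cautions. First, your ``bulk'' step cites progress on Conjecture~\ref{conj:rbs} (partial transversals of Latin squares, i.e.\ rainbow matchings in $K_{n,n}$), but the relevant near-spanning result for rainbow \emph{paths} in proper edge-colourings of $K_n$ is the Balogh--Molla bound of $n-O(n^{1/2}\log n)$ (improving Alon--Pokrovskiy--Sudakov); the matching and path problems are related but not interchangeable, and in any case an almost-spanning path plus a hypothetical absorber is still conditional on the absorber existing. Second, the parity remark at the end of your second paragraph is loose: the $n-2$ threshold in Andersen's conjecture comes from the Maamoun--Meyniel colourings with no rainbow Hamilton path, not from the nonexistence of transversals in $\mathbb{Z}_n$ for even $n$. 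What your outline does correctly capture is why the present paper retreats to the probabilistic setting: for almost all colourings (in the directed/Latin-square formulation) the required gadgets exist with overwhelming probability, which is exactly what Lemmas~\ref{masterswitch1} and~\ref{main-switching-lemma} supply, and the deterministic version remains open.
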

In light of the result of Maamoun and Meyniel~\cite{MM84} that for infinitely many~$n$ there are proper edge-colourings of~$K_{n}$ without a rainbow Hamilton path, Conjecture~\ref{conj:and} would be best-possible.
Similarly to Conjecture~\ref{conj:rbs}, progress towards Conjecture~\ref{conj:and} has largely focussed on increasing the length of the longest rainbow path known to exist for any proper edge-colouring of~$K_{n}$ (see for example~\cite{CL15,GM12,GM10,GRSS11}).
Alon, Pokrovskiy, and Sudakov~\cite{APS17} were the first to asymptotically prove Conjecture~\ref{conj:and} by exhibiting the existence of a rainbow path of length $n-O(n^{3/4})$, with the best known error bound now being~$O(n^{1/2}\log n)$, provided by Balogh and Molla~\cite{BM19}.

Let~$\dirK$ be the digraph obtained from the complete $n$-vertex graph~$K_{n}$ by replacing each edge with two arcs (one in each direction) and adding a directed loop at each vertex.
For every $n\times n$ Latin square we can uniquely associate an arc-colouring of~$\dirK$ as follows: for every position $(i, j)$ of the Latin square, assign the symbol of $(i, j)$ as a colour to the arc in $\dirK$ with tail~$i$ and head~$j$.  Importantly, a partial transversal corresponds to a rainbow subgraph of $\dirK$ with maximum in-degree and out-degree one.  A set of positions is a \textit{cycle} if the corresponding subgraph of $\dirK$ is a directed cycle, and a partial transversal is \textit{cycle-free} if it contains no cycle.  Thus, cycle-free partial transversals correspond to linear directed forests in $\dirK$.
Gy\'{a}rf\'{a}s and S\'{a}rk\"{o}zy~\cite{GS14} proposed the following conjecture, which combines aspects of Conjectures~\ref{conj:rbs} and~\ref{conj:and}.
\begin{conj}[Gy{\'a}rf{\'a}s and S{\'a}rk{\"o}zy~\cite{GS14}]\label{gsconj}
  All $n\times n$ Latin squares have a cycle-free partial transversal of size $n - 2$.
\end{conj}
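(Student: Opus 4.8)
The plan is to pass to the digraph $\dirK$ with its $n$-colour arc-colouring and to construct a rainbow linear directed forest on exactly $n-2$ arcs via the absorption method, carrying throughout the extra bookkeeping needed to keep the forest cycle-free. Note that a rainbow subgraph of $\dirK$ on $n$ arcs with in- and out-degrees at most one is forced to be a disjoint union of directed cycles (a full transversal), so there is no loss in aiming directly at $n-2$ arcs: the two unused rows, columns and symbols then serve as slack available for rerouting.

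First I would reserve, at the outset, a small \emph{absorbing forest} $F_0$ --- a rainbow linear directed forest whose arcs meet a $\gamma$-proportion of the rows, columns and symbols --- with the robustness property that for every small deficiency set $S$ of leftover rows, columns and symbols satisfying $|S\cap(\text{rows})|=|S\cap(\text{columns})|=|S\cap(\text{symbols})|$, the arcs supported on $V(F_0)\cup S$ contain a rainbow linear directed forest covering precisely those rows, columns and symbols. Following the distributive-absorption framework of Montgomery and of Keevash, Pokrovskiy, Sudakov and Yepremyan, $F_0$ would be assembled from many small absorbing gadgets, each switchable between two rainbow linear sub-forests with slightly different vertex and colour profiles but the same endpoints, linked through an auxiliary bipartite ``template'' of large girth and good expansion so that an arbitrary admissible $S$ can be routed through the gadgets. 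What is genuinely new compared with the usual setting is that every gadget switch must avoid closing a directed cycle; designing each gadget to live on a short internally-flexible directed path with frozen ends is intended to guarantee exactly this.

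On the sub-Latin-square obtained by deleting the rows, columns and symbols used by $F_0$, I would then greedily build a rainbow linear directed forest covering all but $o(n)$ --- ideally all but $O(\log n/\log\log n)$, in the spirit of~\cite{KPSY20} --- of the remaining rows, columns and symbols, via a rotation/augmenting-path argument: maintain a rainbow linear forest, and whenever it cannot be extended, use the Latin-square structure to find a sequence of arc swaps that increases its size, always choosing the swap so that no directed cycle is created (the path structure of the current forest supplies an alternative arc whenever the naive choice would close a cycle). Feeding the $o(n)$ still-uncovered rows, columns and symbols into $F_0$ then completes them, by construction, into a rainbow linear forest; with the slack built in it is easy to arrange that exactly two of each remain uncovered, and the resulting linear forest is the desired cycle-free partial transversal of size $n-2$.

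The main obstacle --- and the reason Conjecture~\ref{gsconj} is still open --- is that the absorber $F_0$ must be found \emph{in every} Latin square, not just in a typical one: unlike Kwan's argument or the present paper's main result, no appeal to quasirandomness or to enumeration is available, so the gadgets and their linking template must be located by purely structural or extremal means against an adversarial square, while still carrying the cycle-free constraint. I expect essentially all of the difficulty to lie in controlling the interaction between the symbol-rainbow condition and the directed-cycle condition inside the absorber; by comparison the rotation phase should be a fairly standard adaptation of existing techniques.
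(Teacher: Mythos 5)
This statement is a \emph{conjecture} in the paper (due to Gy\'arf\'as and S\'ark\"ozy), not a theorem: the paper does not prove it and does not claim to. What the paper actually proves (Theorem~\ref{main-thm}) is that \emph{almost all} $n\times n$ Latin squares satisfy a much stronger conclusion, namely that they contain $((1-o(1))n/e^2)^n$ Hamilton transversals; the deterministic statement for \emph{every} Latin square remains open. Your proposal is therefore not comparable to a proof in the paper, and, more importantly, it is not a proof at all. You have outlined the standard absorption template (reserve a robust absorbing forest $F_0$, greedily build a nearly spanning cycle-free rainbow linear forest, absorb the leftovers), but the first and decisive step --- exhibiting the absorbing structure, or even a single absorbing gadget, inside an \emph{arbitrary} Latin square --- is exactly the step you leave unresolved, and you say so yourself in your final paragraph. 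Every known construction of such gadgets (in this paper via switchings in random Latin rectangles, in Kwan's work via the triangle-removal process and Keevash's designs, and in the GKKO paper for $1$-factorizations) is inherently probabilistic: it shows the gadgets exist in a typical square, with failure probability that is small but not zero, and gives no handle on an adversarial square. There is no extremal argument known that guarantees even one $(v,c)$-absorbing gadget in every Latin square, and highly structured squares (e.g.\ Cayley tables of elementary abelian groups) are precisely the kind of examples one would need to rule out.

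A secondary issue: your ``rotation phase'' claim that one can greedily reach $n-O(\log n/\log\log n)$ cycle-free arcs in every Latin square is also unsupported. The Keevash--Pokrovskiy--Sudakov bound of $n-O(\log n/\log\log n)$ is for partial transversals \emph{without} the cycle-free condition; with that condition the best known deterministic bound is $n-O(n^{2/3})$ (Benzing--Pokrovskiy--Sudakov). That weaker bound would still feed $o(n)$ leftovers into a hypothetical absorber, so this is not the fatal gap, but it illustrates that the cycle-free constraint is not a cosmetic add-on to existing rotation arguments. In short: the statement is open, the paper proves only the asymptotically-almost-sure version, and your sketch reduces the conjecture to a subproblem (deterministic absorber construction) that is at least as hard as the conjecture itself.
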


A \textit{proper $k$-arc-colouring} of a digraph
is a colouring of its arcs with~$k$ colours such that no two arcs of the same colour have a common head, or a common tail.  
The set of~$n\times n$ Latin squares is in fact in bijection with the set of proper $n$-arc-colourings of~$\dirK$, with the correspondence described above.
Thus, Conjecture~\ref{gsconj} is equivalent to the following: all proper $n$-arc-colourings of $\dirK$ contain a rainbow directed linear forest with at least $n - 2$ arcs.  No undirected analogue of this conjecture is known -- Balogh and Molla~\cite{BM19} proved that for every proper edge-colouring of $K_n$, there is a rainbow linear forest with at least $n - O(\log^2 n)$ edges, and this bound is the best known.

Less is known in the directed setting. Gy{\'a}rf{\'a}s and S{\'a}rk{\"o}zy~\cite{GS14} proved that every $n\times n$ Latin square has a cycle-free partial transversal of size $n - O(n\log\log n / \log n)$, and Benzing, Pokrovskiy, and Sudakov~\cite{BPS20} improved the error bound to $O(n^{2/3})$.  Benzing, Pokrovskiy, and Sudakov~\cite{BPS20} also proved that every proper arc-colouring of $\dirK$ contains a rainbow directed cycle of size $n - O(n^{4/5})$ and asked by how much this bound can be improved.  We believe it is also interesting to consider by how much this bound can be improved if one considers both rainbow directed cycles and paths, i.e.\ rainbow \textit{connected} subgraphs of maximum in-degree and out-degree at most one.  We conjecture the following.

\begin{conj}\label{conj:common-generalization}
    All proper arc-colourings of $\dirK$ admit a rainbow directed cycle or path of length at least $n - 1$.  
\end{conj}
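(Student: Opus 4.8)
The plan is to attack Conjecture~\ref{conj:common-generalization} by the absorption method, bootstrapping the near-optimal bounds already available for the constituent problems. Fix a proper arc-colouring $\phi$ of $\dirK$. First I would set aside a small \emph{absorbing} rainbow directed path $P_{\mathrm{abs}}$ on $\Theta(n^{\alpha})$ vertices (for a suitable $\alpha \in (0,1)$), together with a reserved set $C_{\mathrm{abs}}$ of $\Theta(n^{\alpha})$ colours with $\phi(P_{\mathrm{abs}}) \subseteq C_{\mathrm{abs}}$, chosen so that for every set $W$ of at most $|V(P_{\mathrm{abs}})|/2$ vertices disjoint from $V(P_{\mathrm{abs}})$ and every colour set $D$ with $|D| = |W|$, there is a rainbow directed path on $V(P_{\mathrm{abs}}) \cup W$ with the same two endpoints as $P_{\mathrm{abs}}$ and colour set exactly $\phi(P_{\mathrm{abs}}) \cup D$. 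Such an absorber should be assembled from \emph{absorbing gadgets}: for a target vertex $v$ and target colour $c \notin C_{\mathrm{abs}}$, a short rainbow path from some $a$ to some $b$ which can be rerouted through $v$ while gaining colour $c$ and no other colour outside $C_{\mathrm{abs}}$. A semi-random (nibble) construction should produce a large family of vertex-disjoint gadgets, and a Hall-type matching then selects one gadget per pair $(v, c)$ that could ever need to be absorbed.

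Second, apply the directed rainbow cycle technology of Benzing--Pokrovskiy--Sudakov~\cite{BPS20} to the complete digraph on the $m = n - \Theta(n^{\alpha})$ unreserved vertices, with colours restricted to avoid $C_{\mathrm{abs}}$: a suitably robust version of their argument should yield a rainbow directed cycle of length $m - O(m^{4/5})$, hence -- after deleting one arc -- a rainbow directed path $Q$ on $m - O(m^{4/5})$ vertices whose colours avoid $\phi(P_{\mathrm{abs}})$. Concatenating $Q$ with $P_{\mathrm{abs}}$ then gives a single rainbow directed path that misses only $O(n^{4/5}) \ll |V(P_{\mathrm{abs}})|$ vertices and a comparable set of colours, so the absorbing property of $P_{\mathrm{abs}}$ swallows all of the missing vertices and produces a rainbow directed Hamilton path, which has length $n - 1$. (If moreover the two ends of this path share a colour not used on it, one may close it into a rainbow directed Hamilton cycle of length $n$; either way Conjecture~\ref{conj:common-generalization} follows.)

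The main obstacle -- and the reason this remains a conjecture -- lies entirely in the first step for an \emph{arbitrary} proper arc-colouring. Highly structured colourings, such as the Cayley table of $\bZ_{n}$ or the Maamoun--Meyniel colourings, are too rigid for a generic nibble-based absorber: locally there may be essentially no freedom to reroute, so the required absorbing gadgets need not exist in sufficient number. A complete proof would therefore need a stability dichotomy, separating the colourings that are quasirandom enough for the absorption to work -- where one in fact expects the far stronger rainbow directed Hamilton cycle, as the present paper establishes for almost all Latin squares -- from a structured residue to be handled directly; this is precisely the kind of obstacle that has kept even the exact Ryser--Brualdi--Stein and Andersen conjectures open. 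Consequently a realistic intermediate target is the asymptotic version, a rainbow directed path or cycle of length $n - n^{o(1)}$, obtained by running the absorption scheme above on the class of colourings for which the absorber provably exists, thereby improving the $n - O(n^{4/5})$ bound of~\cite{BPS20} for connected rainbow structures.
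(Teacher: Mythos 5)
The statement you are addressing is stated in the paper as an open \emph{conjecture} (Conjecture~\ref{conj:common-generalization}); the paper does not prove it, and its main contribution, Theorem~\ref{main-thm}, is the probabilistic assertion that \emph{almost all} proper $n$-arc-colourings of $\dirK$ contain exponentially many rainbow directed Hamilton cycles. Your proposal is explicitly self-aware on this point: you sketch an absorption scheme and then, in your final paragraph, correctly identify the obstacle that prevents it from closing the conjecture, namely that for an arbitrary proper arc-colouring one cannot manufacture the required family of absorbing gadgets, and rigid colourings such as the cyclic Latin square or the Maamoun--Meyniel colourings are precisely where local rerouting freedom collapses. That gap is genuine and is exactly why the paper restricts to the probabilistic setting: the tools it uses to construct the absorber — switching arguments in Latin rectangles, the Kwan--Sudakov discrepancy estimate for lower-quasirandomness, and well-spread collections of gadgets — are statements about typical $G\in\Phi(\dirK)$ with no adversarial analogue.

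Two further points on the part you present as workable. First, the paper does not invoke Benzing--Pokrovskiy--Sudakov as a black box on the unreserved vertices; it instead greedily builds a spanning rainbow directed path \emph{forest} with at most $n^{9/10}$ components using lower-quasirandomness (Lemma~\ref{lemma:pathforests}) and then links the components via short paths through randomly chosen flexible sets. Applying~\cite{BPS20} to a sub-digraph with $\Theta(n^{\alpha})$ vertices and colours deleted would require checking robustness of their argument to that deletion (which you do not supply), and would also lose the counting lower bound. Second, your absorber is asked to absorb an \emph{arbitrary} small vertex set $W$ and colour set $D$; the paper's robustly rainbow-Hamiltonian subgraph (Definition~\ref{def:rrh}) is deliberately weaker, absorbing only subsets of pre-specified random flexible sets $V_{\mathrm{flex}}$, $C_{\mathrm{flex}}$, which is what makes a greedy construction from well-spread gadgets feasible via the distributive-absorption template (Lemma~\ref{lemma:2rmbg}). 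Your stronger absorber would be substantially harder to build even in the random setting. In short, your sketch is a reasonable research plan and you have diagnosed its blocking issue accurately, but it is not a proof of the conjecture, and it does not match any proof in the paper because the paper contains none.
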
 

We define a set of positions in a Latin square to be \textit{connected} if the corresponding subgraph of $\dirK$ is (weakly) connected, and we say a transversal is \textit{Hamilton} if it is both full and connected.  For the case of $n$-arc-colourings, Conjecture~\ref{conj:common-generalization} is equivalent to the following: all $n\times n$ Latin squares have a connected transversal of size $n - 1$.  
If true, Conjecture~\ref{gsconj} implies that every $n\times n$ Latin square has a partial transversal of size one less than what is predicted by Conjecture~\ref{conj:rbs} and also that every proper $n$-edge-colouring of $K_n$ contains either a rainbow path of length $n - 2$, as predicted by Conjecture~\ref{conj:and}, or a spanning rainbow forest with two components.  Conjecture~\ref{conj:common-generalization}, if true, implies all of Conjectures~\ref{conj:rbs}--\ref{gsconj}.

\subsection{Random Latin squares}

In this paper, we study the above conjectures in the probabilistic setting.  
Recently, Kwan~\cite{K20} proved that at most a vanishing proportion of Latin squares fail to satisfy the statement of Conjecture~\ref{conj:rbs}, even finding (many) full transversals in most Latin squares, as follows.
\begin{theorem}[Kwan~\cite{K20}]\label{kwanthm}
Almost all~$n\times n$ Latin squares have at least
\[
\left((1-o(1))\frac{n}{e^{2}}\right)^{n}
\]
transversals.
\end{theorem}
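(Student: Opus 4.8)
The plan is to establish the lower bound \emph{with high probability} rather than merely on average. The target value $((1-o(1))n/e^{2})^{n}$ is, up to taking $n$-th roots, exactly the first moment: writing $X$ for the number of transversals and permuting columns, $\expn X=(n!)^{2}\,\prob{L(i,i)=i\text{ for all }i}$, and conditioning a uniformly random Latin square on $n$ prescribed entries should change its law by only an $e^{o(n)}$ factor, so $\expn X=(n!)^{2}n^{-n}e^{o(n)}=((1-o(1))n/e^{2})^{n}$. But the first moment yields no lower bound, and a direct second moment fails: two ``far apart'' permutations $\sigma,\tau$ are only approximately independent as transversal candidates, and together with the near-diagonal pairs this pushes $\expn{X^{2}}$ above $(1+o(1))\expn X^{2}$ by a factor $e^{\Theta(n)}$, so Chebyshev is useless. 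Instead I would expose the columns $\ell_{1},\dots,\ell_{n}$ of $L$ one at a time. For $0\le k\le n$ let $X_{k}$ be the number of partial transversals using columns $1,\dots,k$ --- one cell per column, in distinct rows, with distinct symbols --- so $X_{0}=1$, $X_{n}=X$, and the aim is to show that with high probability $X_{k}\ge(1-o(1))\frac{(n-k+1)^{2}}{n}X_{k-1}$ at every step, since telescoping this gives $X_{n}\ge (1-o(1))^{n}\prod_{k=1}^{n}\frac{(n-k+1)^{2}}{n}=(n!)^{2}n^{-n}e^{o(n)}$.

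For the inductive step, condition on $\ell_{1},\dots,\ell_{k-1}$. Given a partial transversal $T$ on the first $k-1$ columns with row set $R_{T}$ and symbol set $S_{T}$, the number of ways to extend $T$ into column $k$ is $\bigl|\{r\notin R_{T}:\ell_{k}(r)\notin S_{T}\}\bigr|$; the plan is to show this has conditional expectation $(1\pm o(1))\frac{(n-k+1)^{2}}{n}$ using quasirandomness of random Latin rectangles, roughly that a fresh column is, in each row, close to uniform over the symbols still available there. Summing over the $X_{k-1}$ choices of $T$ then gives $\expn{X_{k}\mid\ell_{1},\dots,\ell_{k-1}}=(1\pm o(1))\frac{(n-k+1)^{2}}{n}X_{k-1}$, and it remains to show $X_{k}$ concentrates about this conditional mean. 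Revealing $\ell_{k}$ entry by entry, each revealed value changes $X_{k}$ by $O\bigl((n-k+1)X_{k-1}/n\bigr)$, and the martingale increments also have small conditional variance, so a Freedman/Bernstein-type inequality gives that $X_{k}$ lies within a $1\pm o(1)$ factor of its conditional mean with probability $1-e^{-\omega(\log n)}$, which survives a union bound over the $O(n)$ steps. This programme should run cleanly as long as at most $(1-\varepsilon)n$ columns have been revealed.

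The hard part will be the \emph{endgame}. As $k\to n$ the conditioning $\ell_{1},\dots,\ell_{k-1}$ fixes almost the whole square, so the remaining columns become highly constrained and the quasirandomness input degrades precisely where it is needed; and once $k>n-\sqrt n$ the factors $\frac{(n-k+1)^{2}}{n}$ drop below $1$, so $X_{k}$ is a sum of mostly-zero terms, most partial transversals ``die'', and concentration of $X_{k}$ becomes delicate. Since $\prod_{k=(1-\varepsilon)n+1}^{n}\frac{(n-k+1)^{2}}{n}=(\varepsilon^{2}n/e^{2})^{\varepsilon n}$ is exactly the factor by which the last $\varepsilon n$ steps must multiply the count to reach $(n!)^{2}n^{-n}$, one cannot be wasteful here. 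Two routes suggest themselves. One is to strengthen the quasirandomness of random Latin rectangles enough to push the step-by-step count essentially all the way, losing only an $e^{o(n)}$ factor --- proving such strong quasirandomness for $(1-o(1))n$-row rectangles is, I expect, the main technical obstacle. The other is to reserve, before the main phase, an ``absorbing'' triple of small sets of rows, columns and symbols, run the step-by-step argument only on the columns outside the reserved ones (forbidding the reserved rows and symbols), and complete every partial transversal so produced through the reserved structure; but reserving $\gamma n$ coordinates of each kind costs an $e^{\Theta(\gamma n\log n)}$ factor in the final count, forcing $\gamma=o(1/\log n)$, so the absorber must be sublinear yet robust, and the ``dying partial transversal'' phenomenon reappears at the boundary of the main phase. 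Reconciling these constraints --- together with establishing the requisite quasirandomness of large random Latin rectangles, which is needed throughout --- is where I expect the real difficulty to lie.
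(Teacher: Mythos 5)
You correctly identify the endgame as the crux, and your Route~(b) --- reserve a sublinear absorbing structure, run the step-by-step count only outside it, then complete through it --- is the right shape of argument; your deduction that the reserved structure must have size $o(n/\log n)$ is also correct (the flexible sets used in the present paper have size $\Theta(n/\log^{3}n)$). But as written the proposal leaves both halves unproved. The concentration step is asserted rather than established: the per-entry worst-case increment you quote, $O\bigl((n-k+1)X_{k-1}/n\bigr)$, summed over the $n$ entries of $\ell_{k}$, allows a total fluctuation of order $(n-k+1)X_{k-1}$, which for $k$ near $n$ dwarfs the conditional mean $(n-k+1)^{2}X_{k-1}/n$; so ``small conditional variance'' is precisely the thing that must be proved, and it is exactly where the ``dying partial transversals'' obstruction bites. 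Similarly, the quasirandomness of random $k\times n$ Latin rectangles that your main phase requires degrades as $k\to n$ and is not supplied by the permanent-estimate machinery (Proposition~\ref{prop:wf}), which is informative only for $|D|\ll n$ and is used in this paper at $|D|=n/10^{6}$.

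For comparison, neither this paper nor Kwan's original argument runs the column-exposure martingale. This paper does not reprove Theorem~\ref{kwanthm} directly --- it cites Kwan and instead proves the stronger Theorem~\ref{main-thm} (counting \emph{Hamilton} transversals), which implies it. That proof proceeds by distributive absorption: switchings in small Latin rectangles, translated to the random Latin square via permanent estimates, show that a typical proper $n$-arc-colouring of $\dirK$ contains many well-spread absorbing and bridging gadgets; these are assembled greedily into a small robustly rainbow-Hamiltonian subdigraph $H$; rainbow directed path forests spanning $G-H$ with few components are then counted one arc at a time, using a \emph{deterministic} lower-quasirandomness property that holds whp (Theorem~\ref{thm:discrepancy}) --- note this is a greedy count inside a fixed $G$, not a martingale over the randomness of the square --- and finally the forest components are linked through the flexible sets and absorbed by $H$. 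Kwan's original proof~\cite{K20} is different again, approximating a random Latin square by an outcome of the triangle-removal process via Keevash's design-existence results. So your Route~(b) is the right idea, but constructing a sublinear-yet-robust absorber without Keevash's machinery, and counting almost-spanning structures in a way that survives the endgame, is where essentially all the work lies and is not carried out in the proposal.
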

Equivalently, a uniformly random $n\times n$ Latin square has at least $\left((1-o(1)){n}/{e^{2}}\right)^{n}$ transversals with high probability.
We note that it was proven by Taranenko~\cite{T15} (with a simpler proof later found by Glebov and Luria~\cite{GL16}) that~$n\times n$ Latin squares can have at most $\left((1+o(1))n/e^{2}\right)^{n}$ transversals, so that the counting term given in Theorem~\ref{kwanthm} is best possible, up to the exponential error term.
Analogously, the authors, together with K\"{u}hn and Osthus~\cite{GKKO20}, proved that almost all optimal edge-colourings (proper edge-colourings using the minimum possible number of colours) of~$K_{n}$ admit a rainbow Hamilton path, which proves a stronger statement than Conjecture~\ref{conj:and} for all but a vanishing proportion of such colourings.

The main result of this paper is the following strengthening of Theorem~\ref{kwanthm}.
\begin{theorem}\label{main-thm}
Almost all proper $n$-arc-colourings of $\dirK$ 
contain at least
\[
\left((1-o(1))\frac{n}{e^{2}}\right)^{n}
\]
rainbow directed Hamilton cycles.
Equivalently, almost all~$n\times n$ Latin squares have at least $\left((1-o(1))n/e^{2}\right)^{n}$ Hamilton transversals.
\end{theorem}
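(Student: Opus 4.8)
The plan is to split the statement into a first‑moment computation, which imports Kwan's bound (Theorem~\ref{kwanthm}) almost for free, and a lower‑tail estimate, which carries the real difficulty. Throughout, identify a Latin square $L$ with the corresponding arc‑colouring of $\dirK$, identify a transversal with a permutation $\sigma\in S_n$ for which $\{(i,\sigma(i)):i\in[n]\}$ is rainbow, and recall that such a transversal is Hamilton exactly when $\sigma$ is an $n$‑cycle.

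\emph{Step 1: the expectation.} For fixed $\sigma\in S_n$, permuting the columns of $L$ by $\sigma^{-1}$ is a measure‑preserving bijection of the uniform space of Latin squares that carries the cell set $\{(i,\sigma(i))\}$ to the main diagonal, so $\mathbb{P}[\{(i,\sigma(i))\}\text{ is rainbow in }L]=p$ for some $p=p(n)$ not depending on $\sigma$. Summing over all $\sigma$ gives $\mathbb{E}[\#(\text{transversals of }L)]=n!\,p$, and summing over the $n$‑cycles gives $\mathbb{E}[\#(\text{Hamilton transversals of }L)]=(n-1)!\,p=\frac{1}{n}\mathbb{E}[\#(\text{transversals of }L)]$. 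Since Theorem~\ref{kwanthm} forces $\mathbb{E}[\#(\text{transversals of }L)]\ge((1-o(1))n/e^2)^n$, we obtain $\mathbb{E}[\#(\text{Hamilton transversals of }L)]\ge\frac{1}{n}((1-o(1))n/e^2)^n=((1-o(1))n/e^2)^n$, the factor $1/n$ being swallowed by the $(1-o(1))^n$ term.

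\emph{Step 2: the lower tail.} Since the Cayley table of $\mathbb{Z}_n$ has no transversal at all when $n$ is even, and since the variance of $\#(\text{transversals})$ is exponentially larger than the square of its mean, a lower‑tail bound cannot come from the first or second moment directly, and I would instead follow a robust version of Kwan's argument for Theorem~\ref{kwanthm}, in two layers. First, prove a quasirandomness package for a typical $L$: over the relevant range of lengths, every partial rainbow directed path has, up to a $(1-o(1))^n$ factor, the heuristically predicted number of extensions to a rainbow directed Hamilton path, and the predicted number of such extensions that moreover close up into a rainbow directed Hamilton cycle; this is where Kwan‑type switching/equidistribution estimates are invoked, but in a form that tracks the global cycle structure and not merely the transversal count. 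Second, use this package to run a greedy count of rainbow directed Hamilton cycles through a fixed vertex (with an absorption device handling the final stretch): at step $k$ there are roughly $(n-k)^2/n$ valid extensions, so multiplying gives at least $(1-o(1))^n(n!)^2/n^n=((1-o(1))n/e^2)^n$ rainbow Hamilton cycles whp. An alternative second layer is a switching argument comparing, for a typical $L$, the numbers $N_k$ of transversals whose permutation has exactly $k$ cycles across consecutive $k$, showing $N_1\ge(1-o(1))^n\sum_k N_k$, so that a $(1-o(1))^n$ fraction of the transversals guaranteed by Theorem~\ref{kwanthm} are already Hamilton; this route needs that $\sum_k N_k$ is not dominated by $k\gg\log n$ and that cycle‑merging trades are plentiful in a typical $L$, both again via quasirandomness.

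\emph{Main obstacle.} The crux is Step~2, and in particular reconciling Kwan's lower‑tail machinery with the connectivity constraint. Lower‑bounding $\#(\text{transversals})$ whp is already delicate for the reasons above; the extra demand that the transversal be a single directed cycle localises the difficulty in the closing arc: passing from a near‑Hamilton structure to a Hamilton cycle costs a factor of order $n$, which is harmless only if it is genuinely paid by a $(1-o(1))^n$ — equivalently $n^{-O(1)}$ — proportion of those structures, i.e.\ only if the single remaining symbol is spread out over the family of rainbow Hamilton paths (or, in the alternative route, if the $n$‑cycle type is not exponentially rare among transversals of a typical $L$). Upgrading the quasirandomness estimates to a statement strong enough to certify this equidistribution is what I expect to be the technical heart of the argument.
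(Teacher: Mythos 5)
Your high‑level skeleton — quasirandomness plus an absorption device plus a greedy extension count of roughly $(n-k)^2/n$ per step, multiplying out to $((1-o(1))n/e^2)^n$ — does match the paper's strategy, and you correctly locate the crux in the absorption step. But the proposal stops exactly where the proof begins: the ``absorption device handling the final stretch'' is the entire content of Sections~\ref{switchsection}--\ref{sec:proof}, and without it the plan is not a proof.

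Concretely, there are three gaps. First, the paper does not import Theorem~\ref{kwanthm} at all; your Step~1, while a valid expectation computation, plays no role in the actual argument (the counting lower bound is obtained directly, in Lemma~\ref{lemma:pathforests}, by counting choices of arcs while greedily extending a rainbow directed path forest in $G-H$, using only the lower‑quasirandomness in Definition~\ref{def:lowerquas}, which is a clean consequence of the Kwan--Sudakov discrepancy bound, Theorem~\ref{thm:discrepancy}). Second, the ``quasirandomness package'' you invoke — that every partial rainbow directed path has the predicted number of Hamilton‑path extensions closing into cycles — is substantially stronger than what is available and stronger than what is needed; the paper deliberately decouples the counting (path forests in $G-H$ with at most $n^{9/10}$ components) from the connectivity, so that only a one‑sided arc‑count estimate is required. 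Third, and most importantly, the absorber is not just a black box: the paper has to (i) define $(v,c)$-absorbers built from $(v,c)$-absorbing gadgets and $(y,z)$-bridging gadgets (Definitions~\ref{def:absgadg}, \ref{def:bridgegadg}, \ref{defn:(v,c)-absorber}); (ii) prove via new switching arguments in Latin rectangles plus permanent estimates (Proposition~\ref{prop:wf}, Lemmas~\ref{masterswitch1} and~\ref{main-switching-lemma}) that a typical $G\in\Phi(\dirK)$ contains large \emph{well‑spread} collections of these gadgets; (iii) assemble them greedily into a $T$‑absorber for a $256$-regular robustly matchable bipartite graph $T$ (Lemmas~\ref{lemma:2rmbg}, \ref{lemma:habsorber}), using Proposition~\ref{prop:num-links} and random flexible sets to avoid pathological vertices/colours; and (iv) verify the robustly rainbow‑Hamiltonian property (Lemma~\ref{prop:rrH}). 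The connectivity constraint you worry about is handled structurally by the bridging gadgets (which furnish two vertex‑disjoint directed rainbow paths, with the same colours, between the abutment vertices in both orders), not by an equidistribution‑of‑closing‑arcs estimate. Your alternative route, comparing the counts $N_k$ of transversals with $k$ cycles by switching, is not pursued and would need a separate, and likely harder, set of ideas to control the cycle‑type distribution.

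In short: the strategy is the right one, and Step~1 is a correct side computation, but the proof as proposed has a genuine hole precisely at the point you flag as ``the technical heart,'' and filling it requires the gadget constructions, the well‑spreadness switching lemmas, and the robustly matchable bipartite graph machinery that you did not attempt.
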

Theorem~\ref{main-thm} implies that a uniformly random proper $n$-arc-colouring satisfies Conjecture~\ref{conj:common-generalization} with high probability, which in turn implies that a uniformly random $n\times n$ Latin square satisfies Conjectures~\ref{conj:rbs} and~\ref{gsconj} with high probability as well.  We note that the number of optimal edge-colourings of $K_n$ is a vanishing fraction of the number of $n\times n$ Latin squares, so Theorem~\ref{main-thm} does not imply the result of~\cite{GKKO20}.

Random Latin squares can be difficult to analyze, in part due to their `rigidity' and lack of independence.  To prove Theorem~\ref{kwanthm}, Kwan~\cite{K20} -- using Keevash's~\cite{K18, K18counting} breakthrough results on the existence of combinatorial designs -- developed a method for approximating a uniformly random Latin square by an outcome of the `triangle-removal process', which is in comparison much easier to analyze.  Prior to Kwan's~\cite{K20} work, a limited number of results (e.g.~\cite{CGW08, KS18, MW99, W04}) were proved using so-called `switching' methods.  Our proof, notably, does not rely on Keevash's~\cite{K18, K18counting} results and instead introduces new techniques for analyzing `switchings' to study Latin squares, thus providing a more elementary proof of Theorem~\ref{kwanthm}.

\subsection{Organization of the paper}
In Section~\ref{section-notation} we clarify some notation and definitions that we will use throughout the paper.
We overview the proof of Theorem~\ref{main-thm} in Section~\ref{overview-section}, and give some preliminary probabilistic results and useful theorems of other authors in Section~\ref{section:preliminaries}.
Sections~\ref{switchsection}--\ref{sec:proof} are devoted to the proof of Theorem~\ref{main-thm}. 
\section{Notation}\label{section-notation}
For a natural number~$n$ we define $[n]\coloneqq\{1,2,\dots,n\}$ and $[n]_{0}\coloneqq[n]\cup\{0\}$.
We say that a partition~$\cP=\{D_{i}\}_{i=1}^{m}$ of a finite set~$D$ into~$m$ parts is \textit{equitable} if $|D_{i}|\in\{\lfloor|D|/m\rfloor,\lceil|D|/m\rceil\}$ for all $i\in[m]$, and when~$|D|$ is large we assume that each part~$D_{i}$ has the same size~$|D|/m$, where this does not affect the argument.

For a digraph~$G$, we write the arc set of~$G$ as~$E(G)$, and we denote an arc from a vertex~$u$ to a vertex~$v$ as~$uv$, and we say that~$u$ is the \textit{tail} of the arc $e=uv$, denoted $u=\vtail{e}$, and that~$v$ is the \textit{head} of~$e$, denoted $v=\vhead{e}$.
We say that any vertex~$v$ such that $uv\in E(G)$ is an \textit{out-neighbour} of~$u$ in~$G$, and that any~$v$ such that $vu\in E(G)$ is an \textit{in-neighbour} of~$u$ in~$G$.
We define~$N^{+}_{G}(v)$ to be the set of out-neighbours of~$v$ in~$G$, sometimes dropping the subscript~$G$ when~$G$ is clear from context, and we call~$N^{+}_{G}(v)$ the \textit{out-neighbourhood} of~$v$ in~$G$.
We define the \textit{in-neighbourhood} of~$v$ in~$G$, denoted~$N^{-}_{G}(v)$, analogously, and we define the \textit{neighbourhood} of~$v$ in~$G$ to be $N_{G}(v)\coloneqq N^{+}_{G}(v)\cup N^{-}_{G}(v)$.
We define $d_{G}^{+}(v)\coloneqq|N^{+}_{G}(v)|$ and $d_{G}^{-}(v)\coloneqq |N^{-}_{G}(v)|$.
For (not necessarily distinct) vertex sets $A,B\subseteq V(G)$ we define $E_{G}(A,B)\coloneqq\{ab\in E(G)\colon a\in A,b\in B\}$, and $e_{G}(A,B)\coloneqq|E_{G}(A,B)|$.
Suppose now that~$G$ is equipped with an arc-colouring~$\phi_{G}$ in colour set~$C$.
Then for a colour $c\in C$ and an arc $e\in E(G)$ we write $\phi_{G}(e)=c$ to mean that~$e$ has colour~$c$ in the colouring~$\phi_{G}$ of~$G$.
We frequently drop the notation~$G$ when~$G$ is clear from context.
Further, if~$\phi(e)=c$ then we say that~$e$ is a $c$-arc, and in the case that~$e$ is a loop we say that~$e$ is a $c$-loop.
We write~$E_{c}(G)$ for the set of $c$-arcs in~$G$ (including $c$-loops), and we refer to~$E_{c}(G)$ as the \textit{colour class} of~$c$.
Fix $u\in V(G)$.
If $d\in C$ is such that there is a $d$-arc~$uv$ in~$G$, then the (unique) vertex~$v$ is called the $d$-\textit{out-neighbour} of~$u$, which we denote by~$\neigh{+}{d}{u}$.
We define the $d$-\textit{in-neighbour}~$\neigh{-}{d}{u}$ of~$u$ analogously.
For $D\subseteq C$ we define $\neighset{+}{D}{u}\coloneqq\{\neigh{+}{d}{u}\colon d\in D\}$ and $\neighset{-}{D}{u}\coloneqq\{\neigh{-}{d}{u}\colon d\in D\}$, and for $A,B\subseteq V$ we define $E_{G,D}(A,B)\coloneqq\{e\in E_{G}(A,B)\colon \phi(e)\in D\}$ and $e_{G,D}(A,B)\coloneqq|E_{G,D}(A,B)|$.
For a subdigraph $H\subseteq G$ we define $\phi_{G}(H)\coloneqq\{\phi_{G}(e)\colon e\in E(H)\}$.

With a slight abuse of notation, we often refer to a pair $(H, \phi)$ where $H$ is a digraph and $\phi$ is a proper arc-colouring of $H$ as a `coloured digraph' $H$ implicitly equipped with a proper arc-colouring $\phi_H$ (or simply $\phi$ if it is clear from the context).  
Using this convention, we let $\Phi(\dirK)$ denote the set of all properly $n$-arc-coloured digraphs $G\cong \dirK$ with vertex set and colour set $[n]$.  (That is, the set of pairs $(G, \phi)$ where $G\cong\dirK$ and $\phi$ is a proper $n$-arc-colouring of $G$).
For a coloured digraph $G\in\Phi(\dirK)$ and a set of colours $D\subseteq [n]$ we define~$G|_{D}$ to be the coloured digraph obtained by deleting all arcs of~$G$ having colours not in~$D$, and we set $\cG^{n}_{D}\coloneqq\{G|_{D}\colon G\in\Phi(\dirK)\}$, though we always drop the~$n$ in the superscript as~$n$ will be clear from context.
By symmetry of the roles of rows, columns, and symbols in Latin squares, the correspondence between Latin squares and elements $G\in\Phi(\dirK)$, and the well-known result that any Latin rectangle has a completion to a Latin square, it is clear that~$\cG_{D}$ could be equivalently defined as the set of all pairs~$(H,\phi_{H})$, where~$H$ is a $|D|$-regular digraph on vertices~$[n]$, and~$\phi_{H}$ is a proper arc-colouring of~$H$ in colours~$D$.\COMMENT{Such considerations are important so that it is clear that when we switch from one $H\in\cG_{D}$ using one of our `switching cycles', the result is also an element of~$\cG_{D}$.}
Throughout the paper we will use the letter~$G$ for an element of~$\Phi(\dirK)$, and the letter~$H$ for an element of~$\cG_{D}$ (for any~$D$).
We often write random variables and objects in bold notation.
For an event~$\cE$ in any probability space we use the notation~$\overline{\cE}$ to denote the complement of~$\cE$.
\section{Overview of the proof}\label{overview-section}
The proof of Theorem~\ref{main-thm} proceeds in two key steps.
We first analyze uniformly random $\mathbf{G}\in\Phi(\dirK)$ and show that with high probability,~$\mathbf{G}$ satisfies three key properties.
It then suffices to suppose that a fixed $G\in\Phi(\dirK)$ satisfies these three properties, and use that hypothesis to build many rainbow directed Hamilton cycles in~$G$.  Before describing these properties, we discuss our strategy for building rainbow directed Hamilton cycles.  To that end, we introduce the following definition.
\begin{defin}\label{def:rrh}
  A subgraph $H\subseteq G\in\Phi(\dirK)$ is \textit{robustly rainbow-Hamiltonian} (with respect to \textit{flexible sets} $V_{\mathrm{flex}}\subseteq V(H)$ and $C_{\mathrm{flex}}\subseteq\phi(H)$ of vertices and colours, and \textit{initial} vertex~$u\in V(H)$ and \textit{terminal} vertex $v\in V(H)$), if for any pair of equal-sized subsets $X\subseteq V_{\mathrm{flex}}$ and $Y\subseteq C_{\mathrm{flex}}$ of size at most~$\min\{|V_{\mathrm{flex}}|/2, |C_{\mathrm{flex}}|/2\}$, the graph $H - X$ contains a rainbow directed Hamilton path with tail~$u$ and head~$v$, not containing a colour in $Y$.
\end{defin}
%
We show that for almost all $G\in\Phi(\dirK)$ and arbitrary sets~$V_{\mathrm{flex}}$,~$C_{\mathrm{flex}}\subseteq[n]$ of sizes $|V_{\mathrm{flex}}|=|C_{\mathrm{flex}}|=\Omega(n/\log^{3}n)$,~$G$ contains a robustly rainbow-Hamiltonian subgraph~$H$ with flexible sets~$V_{\mathrm{flex}}$ and~$C_{\mathrm{flex}}$, such that~$H$ has~$O(n/\log^{3}n)$ vertices and arcs in total. We construct rainbow directed Hamilton cycles by using the popular `absorption' method, and~$H$ will form the key absorbing structure.  More precisely, we find a rainbow directed path $P$ having the terminal vertex $v$ of $H$ as its tail, the initial vertex $u$ of $H$ as its head, such that $V(G)\setminus V(H) \subseteq V(P)$, $V(P) \cap V(H)$ is a subset of $V_{\mathrm{flex}}$ of size at most $|V_{\mathrm{flex}}/2|$, and likewise for the colours.  Letting $X \coloneqq V(P)\cap V(H)$ and $Y\coloneqq \phi(P)\cap \phi(H)$, the robust rainbow-Hamiltonicity of $H$ guarantees there is a rainbow directed Hamilton path $P'$ in $H - X$ with tail $u$ and head $v$, not containing a colour in $Y$, and $P \cup P'$ is a rainbow directed Hamilton cycle.

We find~$H$ by piecing together smaller building blocks we call `absorbers' in a delicate way, where each absorber has the ability to `absorb' a vertex~$v$ and a colour~$c$ not used by $P$. We delay a definition of a $(v,c)$-absorber to Definition~\ref{defn:(v,c)-absorber}, but we give a figure now (see Figure~\ref{fig:vcabsorber}).
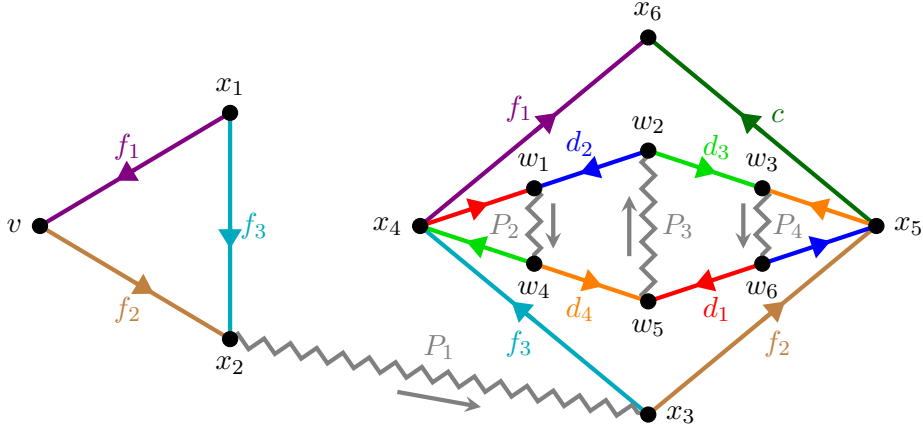
\begin{figure}
    \centering
    \begin{tikzpicture} [scale=1]
    \draw [->,ultra thick,mypurple,>=Triangle] (1.5,1.5)--(0,0.6);
    \draw [ultra thick,mypurple] (0.25,0.75)--(-1,0);
    \draw [->,ultra thick,brown,>=Triangle] (-1,0)--(0.5,-0.9);
    \draw [ultra thick,brown] (0.25,-0.75)--(1.5,-1.5);
    \draw [->,ultra thick,dblue,>=Triangle] (1.5,1.5)--(1.5,-0.3);
    \draw [ultra thick,dblue] (1.5,0)--(1.5,-1.5);
    \draw [->,ultra thick,mypurple,>=Triangle] (4,0)--(5.8,1.5);
    \draw [ultra thick,mypurple] (5.5,1.25)--(7,2.5);
    \draw [->,ultra thick,dblue,>=Triangle] (7,-2.5)--(5.2,-1);
    \draw [ultra thick,dblue] (5.5,-1.25)--(4,0);
    \draw [->,ultra thick,forest,>=Triangle] (10,0)--(8.2,1.5);
    \draw [ultra thick,forest] (8.5,1.25)--(7,2.5);
    \draw [->,ultra thick,brown,>=Triangle] (7,-2.5)--(8.8,-1);
    \draw [ultra thick,brown] (8.5,-1.25)--(10,0);
    \draw [->,ultra thick,mygreen,>=Triangle] (7,1)--(7.9,0.7);
    \draw [ultra thick,mygreen] (7.75,0.75)--(8.5,0.5);
    \draw [->,ultra thick,orange,>=Triangle] (10,0)--(9.1,0.3);
    \draw [ultra thick,orange] (9.25,0.25)--(8.5,0.5);
    \draw [->,ultra thick,red,>=Triangle] (4,0)--(4.9,0.3);
    \draw [ultra thick,red] (4.75,0.25)--(5.5,0.5);
    \draw [->,ultra thick,blue,>=Triangle] (7,1)--(6.1,0.7);
    \draw [ultra thick,blue] (6.25,0.75)--(5.5,0.5);
    \draw [->,ultra thick,red,>=Triangle] (8.5,-0.5)--(7.6,-0.8);
    \draw [ultra thick,red] (7.75,-0.75)--(7,-1);
    \draw [->,ultra thick,orange,>=Triangle] (5.5,-0.5)--(6.4,-0.8);
    \draw [ultra thick,orange] (6.25,-0.75)--(7,-1);
    \draw [->,ultra thick,blue,>=Triangle] (8.5,-0.5)--(9.4,-0.2);
    \draw [ultra thick,blue] (9.25,-0.25)--(10,0);
    \draw [->,ultra thick,mygreen,>=Triangle] (5.5,-0.5)--(4.6,-0.2);
    \draw [ultra thick,mygreen] (4.75,-0.25)--(4,0);
    \draw [snake=zigzag,ultra thick, gray] (1.5,-1.5)--(7,-2.5);
    \draw [snake=zigzag,ultra thick, gray] (5.5,0.5)--(5.5,-0.5);
    \draw [snake=zigzag,ultra thick, gray] (7,-1)--(7,1);
    \draw [snake=zigzag,ultra thick, gray] (8.5,0.5)--(8.5,-0.5);
    \draw [fill] (-1,0) circle [radius=0.1];
    \draw [fill] (1.5,1.5) circle [radius=0.1];
    \draw [fill] (1.5,-1.5) circle [radius=0.1];
    \draw [fill] (4,0) circle [radius=0.1];
    \draw [fill] (7,2.5) circle [radius=0.1];
    \draw [fill] (7,-2.5) circle [radius=0.1];
    \draw [fill] (10,0) circle [radius=0.1];
    \draw [fill] (7,1) circle [radius=0.1];
    \draw [fill] (7,-1) circle [radius=0.1];
    \draw [fill] (8.5,0.5) circle [radius=0.1];
    \draw [fill] (8.5,-0.5) circle [radius=0.1];
    \draw [fill] (5.5,0.5) circle [radius=0.1];
    \draw [fill] (5.5,-0.5) circle [radius=0.1];
    \node [left] at (-1.1,0) {$v$};
    \node [above] at (1.5,1.6) {$x_{1}$};
    \node [below] at (1.5,-1.6) {$x_{2}$};
    \node [left] at (3.9,0) {$x_{4}$};
    \node [below] at (7,-1.1) {$w_{5}$};
    \node [below] at (5.5,-0.6) {$w_{4}$};
    \node [below] at (8.5,-0.6) {$w_{6}$};
    \node [above] at (7,2.6) {$x_{6}$};
    \node [right] at (7.1,-2.5) {$x_{3}$};
    \node [above] at (5.5,0.6) {$w_{1}$};
    \node [above] at (8.5,0.6) {$w_{3}$};
    \node [above] at (7,1.1) {$w_{2}$};
    \node [right] at (10.1,0) {$x_{5}$};
    \node [above,mypurple] at (0.15,0.75) {$f_{1}$};
    \node [below,brown] at (0.15,-0.8) {$f_{2}$};
    \node [right,dblue] at (1.5,0) {$f_{3}$};
    \node [below,brown] at (8.7,-1.25) {$f_{2}$};
    \node [below,orange] at (6.1,-0.8) {$d_{4}$};
    \node [above,blue] at (6.1,0.8) {$d_{2}$};
    \node [below,red] at (7.9,-0.8) {$d_{1}$};
    \node [above,mygreen] at (7.9,0.75) {$d_{3}$};
    \node [above,mypurple] at (5.3,1.25) {$f_{1}$};
    \node [below,dblue] at (5.3,-1.25) {$f_{3}$};
    \node [above,forest] at (8.7,1.25) {$c$};
    \node [above,gray] at (4.25,-1.9) {$P_{1}$};
    \draw [->,>=stealth,gray,ultra thick] (3.7,-2.2)--(4.8,-2.4);
    \draw [->,>=stealth,gray,ultra thick] (5.75,0.3)--(5.75,-0.3);
    \node [left,gray] at (5.45,0) {$P_{2}$};
    \node [right,gray] at (7.05,0) {$P_{3}$};
    \draw [->,>=stealth,gray,ultra thick] (6.75,-0.4)--(6.75,0.4);
    \draw [->,>=stealth,gray,ultra thick] (8.25,0.3)--(8.25,-0.3);
    \node [right,gray] at (8.5,0) {$P_{4}$};
    \end{tikzpicture}
    \caption{A $(v,c)$-absorber. Here $\phi(x_{4}w_{1})=d_{1}$, $\phi(w_{6}x_{5})=d_{2}$, $\phi(w_{4}x_{4})=d_{3}$, and $\phi(x_{5}w_{3})=d_{4}$. $P_{1},\dots,P_{4}$ are rainbow directed paths with directions as indicated, sharing no colours with each other or with the rest of the $(v,c)$-absorber.}
    \label{fig:vcabsorber}
\end{figure}
Notice that a $(v,c)$-absorber has a rainbow directed Hamilton path with tail~$x_{1}$ and head~$x_{6}$, and a rainbow directed path with the same head and tail using all vertices except~$v$ and all colours except~$c$. This is the key property of a $(v,c)$-absorber, and by piecing these together in a precise way we ensure that the resulting union of absorbers (with the sets of specified vertices~`$v$' and colours~`$c$' forming~$V_{\text{flex}}$ and~$C_{\text{flex}}$ respectively) has the desired robustly rainbow-Hamiltonian property.
For technical reasons, we find $(v,c)$-absorbers 
by piecing together two smaller structures we call $(v,c)$\textit{-absorbing gadgets} and $(y,z)$\textit{-bridging gadgets} (see Definitions~\ref{def:absgadg} and~\ref{def:bridgegadg}, respectively), together with the short rainbow directed paths~$P_{1}$,~$P_{2}$,~$P_{3}$,~$P_{4}$ as in Figure~\ref{fig:vcabsorber}.

Thus, the first key property that we need almost all $G\in\Phi(\dirK)$ to satisfy, is that $G$ contains many absorbing gadgets and bridging gadgets, in a `well-spread' way that enables us to construct an appropriate robustly rainbow-Hamiltonian subgraph.  We prove this in Section~\ref{switchsection} using `switchings' in Latin rectangles, then using permanent estimates (see~\cite{B73, E81, F81}, encapsulated by Proposition~\ref{prop:wf} in the current paper) to compare a uniformly random $k \times n$ Latin rectangle to the first $k$ rows of a uniformly random  $n \times n$ Latin square. Lemma~\ref{masterswitch1} ensures the existence of the absorbing gadgets we need, and Lemma~\ref{main-switching-lemma} accomplishes the same for the bridging gadgets. This approach of using permanent estimates to translate statements between these probability spaces was pioneered by McKay and Wanless~\cite{MW99}, who investigated the typical prevalence of 2 × 2 Latin subsquares (also called ‘intercalates’) in a uniformly random Latin square.
For further insight into the usage of this method to study intercalates in random Latin squares, see for example~\cite{KS18, KSS21, KSSS22}.
As the substructures we seek are more complex than intercalates, and we moreover require that they are `well-spread', our proof introduces new techniques for switching arguments in Latin rectangles.  We note that in~\cite{GKKO20}, the authors, with K\"uhn and Osthus, used switching arguments to analyze a uniformly random 1-factorization of $K_n$ and show that with high probability there is a large collection of subgraphs of a form analogous to that of our $(v, c)$-absorbing gadgets in the undirected setting.  \APPENDIX{Fortunately, this argument also works in the directed setting with only minor changes, so we defer the proof of Lemma~\ref{masterswitch1} to the appendix.}\NOTAPPENDIX{Fortunately, this argument also works in the directed setting with only minor changes that we do not cover here, instead providing the proof of Lemma~\ref{masterswitch1} in the appendix of the arXiv version of the paper.}  Thus, Section~\ref{switchsection} is primarily devoted to the proof of Lemma~\ref{masterswitch2}.

The second property of almost all $G\in\Phi(\dirK)$ that we will need concerns the colours of the loops.
Clearly, if we seek to find any rainbow directed Hamilton cycle of $G\in\Phi(\dirK)$, we need to know that there is no colour appearing only on loops in~$G$, and this is given for almost all $G\in\Phi(\dirK)$ (in the context of Latin squares and in considerably stronger form) by Lemma~\ref{fewloops}.  

The third and final property of almost all $G\in\Phi(\dirK)$ that we will need is an appropriate notion of `lower-quasirandomness', which roughly states that for any two subsets $U_1, U_2$ of vertices of $G$ and any set $D$ of colours, the number of arcs in $G$ with tail in $U_1$, head in $U_2$, and colour in $D$, is close to what we would expect if the colours of the arcs of $G$ were assigned independently and uniformly at random.
We delay the precise definition of lower-quasirandomness of $G\in\Phi(\dirK)$ to Definition~\ref{def:lowerquas}.
The desired property that almost all $G\in\Phi(\dirK)$ are lower-quasirandom will follow immediately from~\cite[Theorem 2]{KS18} (see Theorem~\ref{thm:discrepancy} of the current paper), originally stated in the context of `discrepancy' of random Latin squares.

Armed with the three properties of typical $G\in\Phi(\dirK)$ described above, it then suffices to fix such a~$G$ and build many rainbow directed Hamilton cycles.
In Section~\ref{section:absorption}, we show that the existence of many well-spread absorbing and bridging gadgets enables us to greedily build a small robustly rainbow-Hamiltonian subgraph~$H\subseteq G$ with arbitrary flexible sets~$V_{\mathrm{flex}}$ and~$C_{\mathrm{flex}}$ of size $\Theta(n/\log^{3}n)$, and in Section~\ref{sec:proof}, we use this to prove Theorem~\ref{main-thm}.  
The rough idea is to first choose the flexible sets $V_{\mathrm{flex}}$ and $C_{\mathrm{flex}}$ randomly.
Next, we use the lower-quasirandomness property of~$G$ to build a rainbow directed spanning path forest~$Q$ of $G-H$, one arc at a time, until~$Q$ has very few components.
Then, we use the random choice of~$V_{\mathrm{flex}}$ and~$C_{\mathrm{flex}}$, together with Lemma~\ref{fewloops}, to find short rainbow directed paths linking the components of~$Q$ and the designated start and end of~$H$, which use all remaining colours of~$G-H$, and at most half of~$V_{\mathrm{flex}}$ and~$C_{\mathrm{flex}}$.
Finally, we use the key robustly rainbow-Hamiltonian property of~$H$ to absorb the remaining vertices and colours in $V_{\mathrm{flex}}$ and $C_{\mathrm{flex}}$ as described above, completing the rainbow directed Hamilton cycle of~$G$.
To obtain the counting result on the number of rainbow directed Hamilton cycles in~$G$, it suffices to count the number of choices we can make whilst building the rainbow directed spanning path forest~$Q$ of~$G-H$.

We remark that this particular absorption strategy, wherein we create an absorbing structure with `flexible' sets, is an instance of the `distributive absorption' method, which was introduced by Montgomery~\cite{M18} in 2018 and has been found to have several applications since.  In particular, this method is also used in~\cite{K20} and~\cite{GKKO20} to find transversals in random Latin squares and rainbow Hamilton paths in random 1-factorizations, respectively. Our approach differs from that of~\cite{K20} and~\cite{GKKO20} in a few key ways.  First, the `asymmetry' of proper $n$-arc-colourings of $\dirK$ (in comparison to proper edge-colourings of $K_n$ with at most~$n$ colours, which correspond to proper $n$-arc-colourings of~$\dirK$ with monochromatic digons) and `connectedness' of rainbow Hamilton cycles/ Hamilton transversals (in comparison to general transversals in Latin squares) necessitate a more complex absorbing structure than the one of either~\cite{K20} or~\cite{GKKO20}, which is more challenging to create and construct.  Nevertheless, as mentioned, we show that switching arguments are sufficient for finding our absorbing structure, yielding a more elementary proof than that of~\cite{K20}, and moreover, by choosing our flexible sets randomly, we avoid complications involving vertices with few out- or in-neighbours in $V_{\mathrm{flex}}$ on arcs with colour in $C_{\mathrm{flex}}$, providing a further simplification of the approach in~\cite{K20}. In~\cite{GKKO20}, results~\cite{PippengerSpencer, AY05} on nearly perfect matchings in nearly regular hypergraphs are applied to auxiliary hypergraphs to construct both the absorbing structure and a nearly spanning rainbow path in a random 1-factorization of $K_n$, but since the absorbers we use here (minus the internal vertices of the linking paths $P_1, \dots, P_4$) are not regular, the analogous approach fails in the directed setting (as the corresponding auxiliary hypergraphs are not regular).  However, as we show, the `lower-quasirandomness' of typical $G\in\Phi(\dirK)$ is enough for us to find $Q$, the nearly spanning rainbow path forest, without these hypergraph matching results, and our absorbing structure is robust enough to augment it to a rainbow directed path.

\section{Preliminaries}\label{section:preliminaries}
In this brief section we state some results that we will use in the proof of Theorem~\ref{main-thm}.
We begin with a well-known concentration inequality for independent random variables.

Let $X_1, \dots, X_m$ be independent random variables taking values in $\mathcal X$, and let $f : \mathcal X^m \rightarrow \mathbb R$.
If for all $i\in [m]$ and $x'_i, x_1, \dots, x_m \in \mathcal X$, we have
\begin{equation*}
  |f(x_1, \dots, x_{i - 1}, x_i, x_{i + 1}, \dots, x_m) - f(x_1, \dots, x_{i - 1}, x'_i, x_{i + 1}, \dots, x_m)| \leq c_i,
\end{equation*}
then we say $X_i$ \textit{affects} $f$ by at most $c_i$.
    
\begin{theorem}[McDiarmid's Inequality~\cite{M89}]\label{mcd}
  If $X_1, \dots, X_m$ are independent random variables taking values in $\mathcal X$ and $f : \mathcal X^m \rightarrow \mathbb R$ is such that $X_i$ affects $f$ by at most $c_i$ for all $i\in [m]$, then for all $t > 0$,
  \begin{equation*}
    \Prob{|f(X_1, \dots, X_m) - \Expect{f(X_1, \dots, X_m)}| \geq t} \leq \exp\left(-\frac{2t^2}{\sum_{i=1}^m c^2_i}\right).
  \end{equation*}
\end{theorem}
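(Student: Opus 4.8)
The plan is to prove this by the standard martingale route: exhibit $f(X_1,\dots,X_m)-\Expect{f(X_1,\dots,X_m)}$ as the terminal value of a martingale with bounded increments, and then run the Azuma--Hoeffding exponential-moment argument. Concretely, I would set $Z_0\coloneqq\Expect{f(X_1,\dots,X_m)}$ and, for $i\in[m]$, $Z_i\coloneqq\Expect{f(X_1,\dots,X_m)\mid X_1,\dots,X_i}$, so that $(Z_i)_{i=0}^m$ is the Doob martingale of $f$ with respect to the filtration generated by $X_1,\dots,X_m$, with $Z_m=f(X_1,\dots,X_m)$. Writing $D_i\coloneqq Z_i-Z_{i-1}$ for the increments, it suffices to control the upper and lower tails of $\sum_{i=1}^m D_i$.

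The first key step is to show that, conditional on $X_1,\dots,X_{i-1}$, the increment $D_i$ takes values in an interval of length at most $c_i$. This is where independence and the bounded-differences hypothesis enter. Fixing values $x_1,\dots,x_{i-1}$ and setting $g(x)\coloneqq\Expect{f(x_1,\dots,x_{i-1},x,X_{i+1},\dots,X_m)}$, where the expectation is over $X_{i+1},\dots,X_m$ whose joint law does not depend on the value of $X_i$ by independence, we have $Z_i=g(X_i)$ and $Z_{i-1}=\Expect{g(X_i)}$ on this event. Since $X_i$ affects $f$ by at most $c_i$, the pointwise bound $|f(\dots,x_i,\dots)-f(\dots,x'_i,\dots)|\le c_i$ survives the averaging over the remaining coordinates, giving $|g(x)-g(x')|\le c_i$ for all $x,x'\in\mathcal X$; hence $g$ has range inside an interval of length $\le c_i$, and so does $D_i=g(X_i)-\Expect{g(X_i)}$.

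The second key step is Hoeffding's lemma: if $\Expect{Y}=0$ and $Y$ lies almost surely in an interval of length $\ell$, then $\Expect{e^{sY}}\le e^{s^2\ell^2/8}$ for all $s\in\mathbb R$. Applying this conditionally to $D_i$ with $\ell=c_i$ gives $\Expect{e^{sD_i}\mid X_1,\dots,X_{i-1}}\le e^{s^2c_i^2/8}$, and iterating via the tower property yields $\Expect{\exp(s(f(X_1,\dots,X_m)-\Expect{f(X_1,\dots,X_m)}))}\le\exp(s^2\sum_{i=1}^m c_i^2/8)$. A Chernoff bound $\Prob{f-\Expect{f}\ge t}\le e^{-st}\Expect{e^{s(f-\Expect{f})}}$ with the optimal choice $s=4t/\sum_{i=1}^m c_i^2$ then gives $\Prob{f-\Expect{f}\ge t}\le\exp(-2t^2/\sum_{i=1}^m c_i^2)$; applying the same to $-f$, which has identical bounded-differences constants, and combining the two one-sided bounds gives the stated two-sided estimate (the factor $2$ from the union bound being customarily absorbed or simply tolerated).

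I expect the main obstacle to be the first step — verifying rigorously that the Doob increments lie in an interval of width $c_i$ conditional on the past. The point is purely measure-theoretic bookkeeping: one must use independence of $X_1,\dots,X_m$ so that conditioning on $X_1,\dots,X_{i-1}$ and then freezing $X_i$ to a deterministic value leaves the conditional law of $X_{i+1},\dots,X_m$ unchanged, which is exactly what lets the coordinatewise Lipschitz bound pass through the expectation over the remaining coordinates. Everything afterwards — Hoeffding's lemma, the exponential-moment iteration, and optimizing in $s$ — is routine.
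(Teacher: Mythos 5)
The paper does not prove Theorem~\ref{mcd}; it states it as a known result and cites McDiarmid~\cite{M89}, so there is no proof in the paper to compare against. Your proof is the standard Doob-martingale/Azuma--Hoeffding argument for McDiarmid's inequality, and it is correct: the reduction of the increment bound to the function $g(x)=\Expect{f(x_1,\dots,x_{i-1},x,X_{i+1},\dots,X_m)}$ is exactly where independence is used (it guarantees $Z_{i-1}=\Expect{g(X_i)\mid X_1,\dots,X_{i-1}}$ and $Z_i=g(X_i)$ with the law of $X_{i+1},\dots,X_m$ unchanged), and the rest is Hoeffding's lemma plus the Chernoff/optimisation step, all of which you have carried out correctly, including the optimal choice $s=4t/\sum_i c_i^2$. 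One small but genuine point you already flag: with $|\cdot|$ inside the probability, the union of the two one-sided bounds gives the constant $2\exp(-2t^2/\sum_i c_i^2)$, so the statement as printed in the paper is missing a factor of~$2$ (or should be read as the one-sided tail); this is a common informal omission and does not affect any use of the lemma in the paper, since the exponent is what matters, but your proof as written honestly proves the two-sided bound with the factor of~$2$ rather than the literal displayed inequality.
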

Next, we need the notion of `robustly matchable' bipartite graphs, which will form a key part of our absorption argument.
\begin{defin}\label{def:rmbg}
  Let $T$ be a bipartite graph with bipartition $(A, B)$ such that $|A| = |B|$.
  \begin{itemize}
  \item We say $T$ is \textit{robustly matchable} with respect to \textit{flexible sets} $A' \subseteq A$ and $B'\subseteq B$, if for every pair of equal-sized subsets $X \subseteq A'$ and $Y\subseteq B'$ of size at most $\min\{|A'| / 2, |B'| / 2\}$, there is a perfect matching in $T - (X\cup Y)$.
  \item For $m \in \mathbb N$, we say $T$ is a $2RMBG(7m, 2m)$ if $|A| = |B| = 7m$ and $T$ is robustly matchable with respect to flexible sets $A'\subseteq A$ and $B'\subseteq B$ where $|A'| = |B'| = 2m$.
  \end{itemize}
\end{defin}
The concept of using robustly matchable bipartite graphs in absorption arguments was first introduced by Montgomery~\cite{M18}.
We need the following observation of the authors, K\"uhn, and Osthus~\cite[Lemma~4.5]{GKKO20}, which is based on the work of Montgomery.  

\begin{lemma}[Gould, Kelly, K\"uhn, and Osthus~\cite{GKKO20}]\label{lemma:2rmbg}
  For all sufficiently large $m$, there is a $2RMBG(7m, 2m)$ that is $256$-regular.
\end{lemma}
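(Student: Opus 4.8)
The plan is to build a $2RMBG(7m,2m)$ of bounded degree explicitly, in the style of Montgomery~\cite{M18}, and then inflate it to a $256$-regular graph by adding edges. The basic ingredient is a constant-degree bipartite expander: for a suitable absolute constant $d$ and all large $N$, a uniformly random $d$-regular bipartite graph on two parts of size $N$ satisfies, with positive probability, the expansion property that every set $S$ inside one part has $|N(S)|\ge 2|S|$ when $|S|\le N/3$ and $|N(S)|\ge N-|S|$ when $|S|\ge N/3$ --- a routine first-moment computation. By the same argument one obtains, for any fixed ratio, a bounded-degree bipartite graph between parts of sizes $am$ and $bm$ satisfying the analogous weak expansion, e.g.\ $|N(S)|\ge 2|S|$ for every $S$ in the smaller part with $|S|$ up to a constant fraction of $am$, and $|N(S)|\ge|S|$ for all $S$ in that part.

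To assemble the construction, write $A=A'\cupdot A''$ and $B=B'\cupdot B''$ with $|A'|=|B'|=2m$ and $|A''|=|B''|=5m$, and let $A',B'$ be the flexible sets. Let $T_0$ be the union of a perfect matching $M$ between $A''$ and $B''$ (the `spine') together with one bounded-degree expander of the above type between each of the pairs $(A'',B'')$, $(A',B'')$, $(A'',B')$ and $(A',B')$. Each vertex is incident to at most two of these expanders and at most one spine edge, so if the expander degrees are chosen to be small enough constants then $\Delta(T_0)\le 256$. Granting that $T_0$ is a $2RMBG(7m,2m)$, we conclude as follows: the degree sequence $v\mapsto 256-\deg_{T_0}(v)$ has all entries in $[0,256]$ with equal sums over $A$ and $B$, and the bipartite complement of $T_0$ has minimum degree at least $7m-256$, so a standard max-flow argument yields a bipartite graph $T_1$ edge-disjoint from $T_0$ realising this degree sequence; then $T:=T_0\cup T_1$ is $256$-regular, and since $T\supseteq T_0$ we have $T-X-Y\supseteq T_0-X-Y$ for all $X,Y$, so $T$ inherits robust matchability and is the required graph.

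It remains to verify that $T_0$ is a $2RMBG(7m,2m)$. Fix $X\subseteq A'$, $Y\subseteq B'$ with $|X|=|Y|=k\le m$; since the two sides of $T_0-X-Y$ have equal size, by Hall's theorem it suffices to check $|N_{T_0-X-Y}(S)|\ge|S|$ for every $S\subseteq A\setminus X$. Write $S'=S\cap A'$ and $S''=S\cap A''$. The spine contributes $|S''|$ neighbours of $S$ inside $B''$, and crucially none of $B''$ is deleted, as deletions occur only inside $B'$; the $(A',B'')$-expander contributes roughly $\min\{2|S'|,5m\}$ further neighbours of $S'$ in the undeleted set $B''$; and when $|S|$ is large one instead bounds the set of vertices of $B\setminus Y$ having no neighbour in $S$, which by expansion from the $B$-side must be small, so that $|N_{T_0-X-Y}(S)|$ is close to $7m-k$ and in particular at least $|S|$. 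Splitting into the regimes `$|S|$ small / medium / large' and combining these estimates --- using that $k\le m$ and $|A'|=|B'|=2m$, so the flexible sides retain size at least $m$ and the expanders incident to them still expand on the relevant scales --- gives $|N_{T_0-X-Y}(S)|\ge|S|$ in every case, hence a perfect matching of $T_0-X-Y$. The main obstacle is precisely this verification: one must control $|N_{T_0-X-Y}(S)|$ uniformly over all sizes of $S$ and all splittings between flexible and rigid vertices, the delicate case being small $S\subseteq A'$ whose neighbours in $B'$ may be entirely deleted, forcing reliance on expansion into the buffer $B''$ alone --- all while keeping the total degree at most $256$, which bounds the available expansion. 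Everything else (existence of the expanders, Hall's theorem, the degree-inflation step) is routine.
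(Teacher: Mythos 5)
Your high-level strategy is sound and close in spirit to the construction behind the cited result: assemble a bounded-degree candidate $T_0$ from Montgomery-style random bipartite expanders plus a matching, verify robust matchability via Hall's theorem, and then pad with edges from the bipartite complement to make it exactly $256$-regular. The last step is correct as you state it: a supergraph of a robustly matchable graph remains robustly matchable, and the complementary degree sequence $v\mapsto 256-\deg_{T_0}(v)$ is realisable inside the bipartite complement of $T_0$ by a Gale--Ryser or max-flow argument, since the complement has minimum degree $\ge 7m-256$ while the target degrees are all at most $256$.

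The gap is exactly where you flag it, and it is a genuine one: you never actually carry out the Hall verification that $T_0$ is a $2RMBG(7m,2m)$. Two specific problems arise in your sketch. First, the spine contributes $|S''|$ vertices of $B''$ and the $(A',B'')$-expander contributes further vertices of $B''$, but both sets sit inside $B''$ and may coincide, so you cannot simply add them; to make the case $S''\neq\emptyset$, $S'\neq\emptyset$, $|S|$ moderate work, you must in addition invoke the $(A'',B'')$-expander to inflate $N(S'')$, and then take a maximum rather than a sum, splitting according to which of $|S'|$, $|S''|$ dominates --- this is not the argument you wrote down. Second, the ``medium'' regime, $|S|$ of order $m$ but neither small enough for factor-two expansion nor large enough for the co-neighbourhood bound, is the place where the expansion thresholds (your $N/3$ cutoff), the unequal side sizes $2m$ versus $5m$, and the degree budget all have to be reconciled simultaneously; you do not exhibit parameters under which all three regimes close. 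Until you fix concrete expander degrees, prove (rather than assert) the expansion at the relevant scales for parts of ratio $2{:}5$, and run the full case split for $S=S'\cup S''$ with $X,Y$ adversarial of size up to $m$, the claim that $T_0$ is a $2RMBG(7m,2m)$ with $\Delta(T_0)\le 256$ is unproven. Your final paragraph correctly identifies this as ``the main obstacle,'' but identifying the obstacle does not surmount it.
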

For a coloured digraph $H\in\cG_{D}$, we define~$\text{comp}(H)$ to be the number of distinct ways to complete~$H$ to an element $G\in\Phi(\dirK)$, or more precisely the number of $H'\in\cG_{[n]\setminus D}$ having $E(H)\cap E(H')=\emptyset$ (and therefore $E(H)\cup E(H')=E(\dirK)$).
We will use the following proposition to compare the probabilities of events in the probability spaces corresponding to uniformly random $\mathbf{H}\in\cG_{D}$ (for some small $D\subseteq[n]$) and uniformly random $\mathbf{G}\in\Phi(\dirK)$ (see for example the proof of Lemma~\ref{main-switching-lemma}).
\begin{prop}\label{prop:wf}
For any $D\subseteq[n]$ and $H,H'\in\cG_{D}$ we have
\[
\frac{\comp(H)}{\comp(H')}\leq\exp(O(n\log^{2}n)).
\]
\end{prop}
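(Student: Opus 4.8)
The plan is to express $\comp(H)$, for any $H\in\cG_D$, as a product of permanents of $\{0,1\}$-matrices — one factor for each colour outside $D$ — and to sandwich each such permanent between Br\'egman's upper bound and the Egorychev--Falikman lower bound for permanents (van der Waerden's conjecture). These are precisely the estimates cited as~\cite{B73,E81,F81}.

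By the correspondence with Latin squares, an element $H\in\cG_D$ is a partial Latin square in which the symbols of $D$ have been placed, and $\comp(H)$ counts its completions to a full $n\times n$ Latin square; I would count these completions by placing the remaining $n-|D|$ symbols one at a time. If $k$ symbols have been placed, where $|D|\le k\le n-1$, then each row and each column has exactly $k$ filled cells, so the empty cells form the edge set of an $(n-k)$-regular bipartite graph between the $n$ rows and the $n$ columns, and the number of legal placements of the next symbol is the permanent of its biadjacency matrix, an $(n-k)$-regular $\{0,1\}$-matrix. As a regular bipartite graph has a perfect matching, this permanent is always positive, so completing colour by colour and applying this at every stage gives, for every $H\in\cG_D$,
\[
\prod_{j=1}^{n-|D|}\,\min_{A}\mathrm{per}(A)\ \le\ \comp(H)\ \le\ \prod_{j=1}^{n-|D|}\,\max_{A}\mathrm{per}(A),
\]
where for each $j$ the minimum and maximum range over all $n\times n$ $\{0,1\}$-matrices $A$ with all row and column sums equal to $j$ (set $j=n-k$).

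Next I would apply the two permanent estimates. Br\'egman's theorem gives $\mathrm{per}(A)\le (j!)^{n/j}$ for any $j$-regular $n\times n$ $\{0,1\}$-matrix $A$, and the Egorychev--Falikman theorem, applied to the doubly stochastic matrix $j^{-1}A$, gives $\mathrm{per}(A)=j^{n}\,\mathrm{per}(j^{-1}A)\ge j^{n}\cdot n!/n^{n}$. Feeding the first bound into the upper estimate for $\comp(H)$ and the second into the lower estimate for $\comp(H')$ yields
\[
\frac{\comp(H)}{\comp(H')}\ \le\ \prod_{j=1}^{n-|D|}\frac{(j!)^{n/j}\,n^{n}}{n!\,j^{n}}.
\]
It then remains to bound the logarithm of the right-hand side. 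Using Stirling's formula $\log(m!)=m\log m-m+\tfrac{1}{2}\log(2\pi m)+O(1/m)$, the $j$-th factor has logarithm $\tfrac{n}{2j}\log(2\pi j)+O(n/j^{2})-\tfrac{1}{2}\log(2\pi n)+O(1/n)$; discarding the negative term and summing over $1\le j\le n$ gives at most $\tfrac{n}{2}\sum_{j\le n}\tfrac{\log(2\pi j)}{j}+O\!\bigl(n\textstyle\sum_{j\le n}j^{-2}\bigr)+O(1)=O(n\log^{2}n)$, since $\sum_{j\le n}\tfrac{\log j}{j}=O(\log^{2}n)$. Exponentiating gives $\comp(H)/\comp(H')\le\exp(O(n\log^{2}n))$.

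The computation is essentially routine once the two permanent bounds are in hand; the only genuinely delicate points are the justification of the sandwiching of $\comp$ between the products of the stagewise extremal permanents — which relies on the availability matrix being regular, hence of positive permanent, at every stage, so the completion process never gets stuck — and being slightly careful in the Stirling estimate that the small-$j$ factors (e.g.\ $j=1$, which alone contributes $\exp(\Theta(n))$) are harmless, the bulk of the $\Theta(n\log^{2}n)$ coming from the tail sum $\tfrac{n}{2}\sum_{j}\tfrac{\log j}{j}$.
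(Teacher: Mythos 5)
Your argument is correct and is essentially the same as the paper's, which proves this proposition by citing~\cite[Proposition~5]{KS18} after identifying $\cG_D$ with the set of $|D|\times n$ Latin rectangles; that cited result is established by exactly the reasoning you give, namely sandwiching the number of single-colour extensions at each stage between the Egorychev--Falikman lower bound and Br\'egman's upper bound for permanents of regular $\{0,1\}$-matrices, and then summing the logarithms. You have in effect unpacked the black box that the paper invokes.
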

Proposition~\ref{prop:wf} follows immediately from (for example)~\cite[Proposition 5]{KS18} as~$\cG_{D}$ can easily be seen to be equivalent to the set of $|D|\times n$ Latin rectangles.

Next, we show (in the context of Latin squares) that a uniformly random $\mathbf{G}\in\Phi(\dirK)$ does not have too many loops of a fixed colour. We first need the following well-known result on the number of fixed points of a random permutation.
\begin{lemma}\label{montperm}
Let~$\bm{\sigma}$ be a uniformly random permutation of~$[n]$, and let~$\mathbf{X}$ denote the number of fixed points of~$\bm{\sigma}$.
Then, for $k\in[n]_{0}$, we have $\prob{\mathbf{X}=k}=\frac{1}{k!}\sum_{j=0}^{n-k}\frac{(-1)^{j}}{j!}$.
\end{lemma}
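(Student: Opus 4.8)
The plan is to prove this classical fact by a straightforward inclusion--exclusion argument, organised in two steps. First I would reduce to counting \emph{derangements}, i.e.\ permutations with no fixed point: a permutation of $[n]$ with exactly $k$ fixed points is obtained by choosing the $k$-element set $S$ of fixed points (in $\binom{n}{k}$ ways) and then choosing a permutation of $[n]\setminus S$ with no fixed point. Hence if $D_m$ denotes the number of derangements of an $m$-element set, the number of permutations of $[n]$ with exactly $k$ fixed points is $\binom{n}{k}D_{n-k}$, and since $\bm{\sigma}$ is uniform we get
\[
  \prob{\mathbf{X}=k}=\frac{1}{n!}\binom{n}{k}D_{n-k}.
\]

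The second step is to establish the formula $D_m=m!\sum_{j=0}^{m}\frac{(-1)^j}{j!}$. For this I would apply inclusion--exclusion to the events $A_i\coloneqq\{\pi(i)=i\}$ over a uniformly random permutation $\pi$ of an $m$-element set (or directly to the counts): for a fixed $j$-element subset $T$, the number of permutations fixing every element of $T$ is $(m-j)!$, so $|\bigcap_{i\in T}A_i|=(m-j)!$, and summing over the $\binom{m}{j}$ choices of $T$ gives
\[
  D_m=\sum_{j=0}^{m}(-1)^j\binom{m}{j}(m-j)!=\sum_{j=0}^{m}(-1)^j\frac{m!}{j!}=m!\sum_{j=0}^{m}\frac{(-1)^j}{j!}.
\]

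Finally I would substitute this into the displayed expression for $\prob{\mathbf{X}=k}$: using $\binom{n}{k}=\frac{n!}{k!(n-k)!}$ and $D_{n-k}=(n-k)!\sum_{j=0}^{n-k}\frac{(-1)^j}{j!}$, the factors $n!$ and $(n-k)!$ cancel, leaving exactly $\frac{1}{k!}\sum_{j=0}^{n-k}\frac{(-1)^j}{j!}$, as claimed. This argument works uniformly for all $k\in[n]_0$ (the case $k=n$ giving the empty sum convention, which evaluates to $1$, and $k=n-1$ being impossible, consistent with the sum $1-1=0$). There is no real obstacle here: the only point requiring a little care is the inclusion--exclusion bookkeeping in the derangement count, which is entirely standard; alternatively one may simply cite the well-known derangement formula and skip directly to the substitution.
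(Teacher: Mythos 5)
Your proof is correct and is the standard inclusion--exclusion argument: reduce to the derangement count via $\binom{n}{k}D_{n-k}$ and then apply inclusion--exclusion to establish $D_m = m!\sum_{j=0}^m (-1)^j/j!$. The paper states this lemma as a well-known fact and gives no proof at all, so there is nothing to compare against; your write-up fills that gap cleanly, and the sanity checks at $k=n$ and $k=n-1$ are a nice touch.
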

\begin{lemma}\label{fewloops}
Let~$\mathbf{L}$ be a uniformly random $n\times n$ Latin square with entries in~$[n]$, and suppose $t\geq 3\log n/\log\log n$.
Let~$\mathbf{X}$ be the random variable which returns the maximum (over the symbol set~$[n]$) number of times that any symbol appears on the  leading diagonal, in~$\mathbf{L}$.
Then $\prob{\mathbf{X}\geq t}\leq\exp(-\Omega(t\log t))$.
\end{lemma}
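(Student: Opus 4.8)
The plan is to prove the upper bound by comparing the leading diagonal of a uniformly random $n\times n$ Latin square to a random permutation, and then to take a union bound over symbols. Fix a symbol $c \in [n]$, and let $\mathbf{X}_c$ denote the number of times $c$ appears on the leading diagonal of $\mathbf{L}$; we will bound $\prob{\mathbf{X}_c \geq t}$ and then multiply by $n$. The key observation is that the positions of the symbol $c$ in $\mathbf{L}$ form a permutation matrix, i.e.\ they are described by a permutation $\bm{\sigma}_c$ of $[n]$ (sending row $i$ to the column containing $c$ in row $i$), and $\mathbf{X}_c$ is precisely the number of fixed points of $\bm{\sigma}_c$. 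If $\bm{\sigma}_c$ were a uniformly random permutation, Lemma~\ref{montperm} would give $\prob{\mathbf{X}_c = k} = \frac{1}{k!}\sum_{j=0}^{n-k}\frac{(-1)^j}{j!} \leq \frac{1}{k!} \cdot 2$, so $\prob{\mathbf{X}_c \geq t} \leq \sum_{k \geq t} \frac{2}{k!} \leq \frac{4}{t!} = \exp(-\Omega(t\log t))$, and then the union bound over the $n$ symbols would cost a factor $n = \exp(O(\log n))$, which is absorbed since $t \geq 3\log n/\log\log n$ forces $t\log t = \Omega(\log n)$ with a constant we can make large enough by choosing the implicit constant carefully (or, more robustly, by using $t \geq 3\log n/\log\log n$ to get $t! \geq n^{3 - o(1)}$, which beats $n$ with room to spare).

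The genuine content is therefore to show that $\bm{\sigma}_c$ is (close enough to) a uniformly random permutation. This follows from a standard symmetry argument: in a uniformly random Latin square, the symbols play interchangeable roles, and for a fixed symbol $c$, the conditional distribution of the "location permutation" $\bm{\sigma}_c$ can be analysed directly. Concretely, I would argue that for any fixed permutation $\pi$ of $[n]$, the number of Latin squares in which symbol $c$ occupies exactly the positions $\{(i, \pi(i)) : i \in [n]\}$ does not depend on $\pi$: given a permutation matrix of positions for $c$, the remaining structure is the set of completions of a single "partial Latin square" (one symbol placed), and any two permutation matrices are related by permuting rows and columns, which is a bijection on the set of Latin squares fixing the symbol set. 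Hence $\bm{\sigma}_c$ is exactly uniform on the symmetric group, and Lemma~\ref{montperm} applies verbatim.

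The main obstacle — really the only subtlety — is making the symmetry argument airtight: one must check that permuting rows (say) by a permutation $\tau$ carries a Latin square in which $c$ sits on positions $\{(i,\pi(i))\}$ bijectively to one in which $c$ sits on $\{(i, \pi(\tau^{-1}(i)))\}$, and that as $\tau$ ranges over $S_n$ this already realises every permutation matrix, giving equidistribution. (One could alternatively phrase this via the bijection with $\Phi(\dirK)$: the loops coloured $c$ in $\mathbf{G}$ correspond exactly to the fixed points of $\bm{\sigma}_c$, and relabelling vertices acts transitively on permutation matrices.) Everything after that — the Poisson-type tail from Lemma~\ref{montperm}, the crude bound $\frac{1}{k!}\sum_j \frac{(-1)^j}{j!} \le 1$, summing the tail, and the union bound over symbols using $t \geq 3\log n / \log\log n$ to dominate the factor $n$ — is routine. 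I would also remark, as the statement's phrasing invites, that by row/column symmetry the "leading diagonal" may be replaced by any fixed diagonal.
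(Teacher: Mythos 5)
Your proposal is correct and takes essentially the same route as the paper: both reduce $\mathbf{X}_c$ to the number of fixed points of a uniformly random permutation via a symmetry argument (the paper by partitioning $\cL_n$ into row-permutation equivalence classes with a canonical diagonal representative; you by arguing directly that $\bm{\sigma}_c$ is equidistributed under column permutations), then apply Lemma~\ref{montperm} and a union bound over symbols, using $t \geq 3\log n/\log\log n$ to absorb the factor of $n$.
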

\begin{proof}
Let~$\cL_{n}$ be the set of~$n\times n$ Latin squares with symbols~$[n]$, and for $L,L'\in\cL_{n}$, write $L\sim L'$ if~$L'$ can be obtained from~$L$ via a permutation of the rows.
Clearly,~$\sim$ is an equivalence relation on~$\cL_{n}$.
Note that $\mathbf{L}$ can be obtained by first choosing an equivalence class $\mathbf S \in \cL_n /{\sim}$ uniformly at random and then choosing $\mathbf L \in \mathbf S$ uniformly at random.  We actually prove the stronger statement that for every equivalence class $S \in \cL_n /{\sim}$, if $\mathbf{L}\in S$ is chosen uniformly at random, then $\prob{\mathbf X \geq t} \leq \exp(-\Omega(t \log t))$.

Each equivalence class $S \in \cL_n/{\sim}$ has size~$n!$ and contains a unique representative~$L_{S,i}$ with every symbol on the leading diagonal being~$i$, for each $i\in[n]$.
Applying a uniformly random row permutation~$\bm{\sigma}$ to~$L_{S,i}$ yields a uniformly random element~$\mathbf{L}$ of $S$, and the number of appearances~$\mathbf{X}_{i}$ of~$i$ on the leading diagonal of~$\mathbf{L}$ is equal to the number of fixed points of~$\bm{\sigma}$.
Then, if $t\geq 3\log n/\log \log n$ and~$n$ is sufficiently large, we have by Lemma~\ref{montperm} and Stirling's formula that\COMMENT{Note $n/t!=\exp(\log n -t\log t+t-O(\log t))=\exp(\log n -t\log t +o(t\log t))\leq\exp(\log n -\frac{9}{10}t\log t)$. For $t=\omega(\log n/\log \log n)$, we have $t\log t = \omega(\log n)$, whence clearly $n/t!\leq\exp(-\frac{1}{2}t\log t)$, and applying the further union bound over symbols continues to satisfy the lemma statement. For $t=\Theta(\log n/\log \log n)$, we use $t\geq 3\log n/\log \log n$ to observe that
\[
\log n =\frac{\log n}{\log \log n}\log\left(\frac{\log n}{\log \log n}\right) +\frac{\log n \log \log \log n}{\log \log n}\leq\frac{1}{3}t\log\left(\frac{1}{3}t\right)+o(t\log t)\leq \frac{2}{5}t\log t.
\]
Thus $\prob{\mathbf{X}_{i}\geq t}\leq \exp(-\frac{1}{2}t\log t)$ in this case too, and applying the union bound we note that we still have $\exp(\log n-\frac{1}{2}t\log t)\leq \exp(-\frac{1}{10}t\log t)$.}
\[
\prob{\mathbf{X}_{i}\geq t}=\sum_{k=t}^{n}\frac{1}{k!}\sum_{j=0}^{n-k}\frac{(-1)^{j}}{j!}\leq \sum_{k=t}^{n}\frac{1}{k!}\leq\frac{n}{t!}\leq\exp\left(-\frac{1}{2}t\log t\right),
\]
where we have used the simple observation\COMMENT{For $k=n$, this sum is just the first term, namely $1$. For smaller~$k$, the length of the sum increases. Suppose~$k$ is such that the sum has $m\geq 2$ terms (i.e.\ set $m\coloneqq n-k+1$ and suppose $k\leq n-1$). If~$m$ is odd then the sum is equal to $1-\left(\frac{1}{1!}-\frac{1}{2!}\right) -\left(\frac{1}{3!}-\frac{1}{4!}\right)-\dots\leq 1$, since each of the bracketed terms is positive. If~$m$ is even then the sum is equal to the sum of the first $m-1$ terms (at most~$1$ as above) plus the $m$th term, but the $m$th term is negative.}
that $\sum_{j=0}^{n-k}\frac{(-1)^{j}}{j!}\leq 1$ for all $k\in[n]_{0}$.
A union bound over symbols $i\in[n]$ now completes the proof.
\end{proof}

Finally, we need the following theorem of Kwan and Sudakov~\cite[Theorem 2]{KS18}, originally stated in the context of `discrepancy' of random Latin squares.
Theorem~\ref{thm:discrepancy} ensures in particular that almost all $G\in\Phi(\dirK)$ are `lower-quasirandom' (see Definition~\ref{def:lowerquas}), which we will use when building and counting the almost-spanning rainbow directed path forests (see Lemma~\ref{lemma:pathforests}) that we later absorb into rainbow directed Hamilton cycles.
\begin{theorem}[Kwan and Sudakov~\cite{KS18}]\label{thm:discrepancy}
Let $\mathbf{G}\in\Phi(\dirK)$ be chosen uniformly at random.
Then with high probability, for all (not necessarily distinct) sets $U_{1}, U_{2}, D\subseteq[n]$, we have that
\[
\left|e_{G,D}(U_{1},U_{2})-\frac{|U_{1}||U_{2}||D|}{n}\right|=O\left(\sqrt{|U_{1}||U_{2}||D|}\log n + n\log^{2}n\right).
\]
\end{theorem}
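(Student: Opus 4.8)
Theorem~\ref{thm:discrepancy} is exactly~\cite[Theorem~2]{KS18}, so in the paper we simply cite it; here is a sketch of how one proves it. First I would reduce to a single fixed triple $(U_1, U_2, D)$: there are at most $8^n$ such triples, so by a union bound it suffices to show that for each fixed $(U_1, U_2, D)$, writing $\mathbf{N} \coloneqq e_{\mathbf{G}, D}(U_1, U_2)$ and $\mu \coloneqq |U_1||U_2||D|/n$,
\[
\Prob{|\mathbf{N} - \mu| \geq \lambda} \leq \exp(-\omega(n)), \qquad \lambda \coloneqq C\bigl(\sqrt{|U_1||U_2||D|}\,\log n + n\log^2 n\bigr),
\]
for a sufficiently large absolute constant $C$. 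The target $\mu$ is forced: permuting symbol labels preserves the uniform distribution on Latin squares, so the symbol in any fixed cell is marginally uniform on $[n]$, whence $\Expect{\mathbf{N}} = |U_1||U_2||D|/n$ exactly. Since $\sqrt{|U_1||U_2||D|} = \sqrt{n\mu}$, a Chernoff-type tail bound $\Prob{|\mathbf{N} - \mu| \geq t} \leq \exp(-\Omega(t^2/(\mu + t)))$ evaluated at $t = \lambda$ already yields $\exp(-\Omega(n\log^2 n))$, comfortably beating $8^{-n}$; so the entire task is to prove such a tail bound. A direct exposure martingale (revealing $\mathbf{G}$ row by row) is hopeless, as changing one row forces uncontrolled changes elsewhere and the resulting Lipschitz constants are far too large.

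Instead one uses a switching argument. The relevant local move is this: for a pair of symbols $\{s, t\}$, the cells of a Latin square carrying symbol $s$ or $t$ form a $2$-regular bipartite graph on (rows, columns), hence a disjoint union of even cycles, and swapping $s \leftrightarrow t$ along any one such cycle (turning its $s$-cells into $t$-cells and vice versa) yields another Latin square. Take $s \in D$, $t \notin D$; if the chosen cycle meets the ``box'' $U_1 \times U_2$ in exactly one cell --- which can be arranged by taking a \emph{short} cycle, e.g.\ a length-$4$ cycle (an intercalate) whose other row avoids $U_1$ and whose other column avoids $U_2$ --- then this swap changes $\mathbf{N}$ by exactly $\pm 1$ (by $-1$ if the cell it meets is an $s$-cell lying in $U_1 \times U_2$, i.e.\ one of the currently-``good'' cells counted by $\mathbf{N}$). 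With $\Omega_k \coloneqq \{L : e_{L, D}(U_1, U_2) = k\}$, one counts pairs (Latin square, admissible switching) in two ways --- forward switchings out of each $L \in \Omega_{k+1}$ down into $\Omega_k$ versus backward switchings out of each $L' \in \Omega_k$ up into $\Omega_{k+1}$ --- to bound $|\Omega_{k+1}|/|\Omega_k| = \Prob{\mathbf{N} = k+1}/\Prob{\mathbf{N} = k}$ in terms of lower bounds on the former and upper bounds on the latter. A careful accounting shows these ratios decay past $k = \mu$ (and grow below it) fast enough to give the Chernoff-type tail --- heuristically, the ratio behaves like the number of ``bad'' box-cells that can be promoted divided by the number of ``good'' box-cells that can be demoted, which crosses $1$ near $k = \mu$.

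The main obstacle is making the switchings \emph{robust} while keeping the induced change in $\mathbf{N}$ bounded. A single intercalate through a prescribed cell need not exist (there are only $\Theta(n^2)$ intercalates in all), so one cannot guarantee many length-$4$ forward switchings out of \emph{every} $L \in \Omega_k$, and longer cycle-swaps generically touch $U_1 \times U_2$ in several cells and shift $\mathbf{N}$ by more than $1$. The resolution, as carried out in~\cite{KS18}, is to work with a more flexible family of switchings --- cycle-swaps of bounded length, or a ``walk then close up'' operation --- while tracking the bounded change in $\mathbf{N}$ each induces, and, where convenient, comparing the relevant counts across the Latin-square and Latin-rectangle models using permanent estimates in the spirit of Proposition~\ref{prop:wf}. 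The generous slack in $\lambda$ (a factor $\sqrt{n}\log n$ above the ``independent'' prediction $\sqrt{\mu}$) is exactly what absorbs the losses from these less-than-minimal switchings. As all of this is done in~\cite{KS18}, we simply invoke~\cite[Theorem~2]{KS18}.
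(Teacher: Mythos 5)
You correctly observe that the paper establishes Theorem~\ref{thm:discrepancy} purely by citing~\cite[Theorem~2]{KS18} (after translating from the Latin-square to the coloured-digraph formulation), and your proposal does exactly the same, so the approaches coincide. The supplementary sketch you append of the underlying switching argument in~\cite{KS18} is plausible heuristic background but is not part of the paper's treatment and is not needed, since the result is invoked as a black box.
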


\section{Absorbers via switchings}\label{switchsection}
The aim of this section is to prove that almost all $G\in\Phi(\dirK)$ have many well-distributed absorbing gadgets and bridging gadgets, which we define now (see also Figure~\ref{fig:gadgets}).
\begin{defin}\label{def:absgadg}
For a vertex~$v$ and a colour~$c$, a $(v,c)$\textit{-absorbing gadget} is a digraph~$A$ having vertex set $V(A)=\{v,x_{1},x_{2},\dots,x_{6}\}$ and arcs $E(A)=\{x_{1}v,vx_{2},x_{1}x_{2},x_{3}x_{4},x_{3}x_{5},x_{4}x_{6},x_{5}x_{6}\}$, equipped with a proper arc-colouring~$\phi_{A}$, such that the following holds:
\begin{itemize}
    \item $\phi_{A}(x_{1}v)=\phi_{A}(x_{4}x_{6})\eqqcolon f_{1}$;
    \item $\phi_{A}(vx_{2})=\phi_{A}(x_{3}x_{5})\eqqcolon f_{2}$;
    \item $\phi_{A}(x_{1}x_{2})=\phi_{A}(x_{3}x_{4})\eqqcolon f_{3}$;
    \item $\phi_{A}(x_{5}x_{6})=c$;
    \item the colours $f_{1},f_{2},f_{3},c$ are distinct.
\end{itemize}
In this case, we say $(x_4, x_5)$ is the pair of \textit{abutment vertices} of~$A$.
\end{defin}
\begin{defin}\label{def:bridgegadg}
For distinct vertices~$y$ and~$z$, a $(y,z)$\textit{-bridging gadget} is a digraph~$B$ such that $V(B)=\{y,z,w_{1},w_{2},\dots,w_{6}\}$ and $E(B)=\{yw_{1}, w_{2}w_{1}, w_{2}w_{3}, zw_{3}, w_{4}y, w_{4}w_{5}, w_{6}w_{5}, w_{6}z\}$, equipped with a proper arc-colouring~$\phi_{B}$, such that the following holds:
\begin{itemize}
    \item $\phi_{B}(yw_{1})=\phi_{B}(w_{6}w_{5})\eqqcolon d_{1}$;
    \item $\phi_{B}(w_{2}w_{1})=\phi_{B}(w_{6}z)\eqqcolon d_{2}$;
    \item $\phi_{B}(w_{4}y)=\phi_{B}(w_{2}w_{3})\eqqcolon d_{3}$;
    \item $\phi_{B}(w_{4}w_{5})=\phi_{B}(zw_{3})\eqqcolon d_{4}$;
    \item the colours $d_{1},\dots,d_{4}$ are distinct.
\end{itemize}
\end{defin}
\captionsetup{labelfont={rm}}
\captionsetup[sub]{font=small,labelfont={rm}}
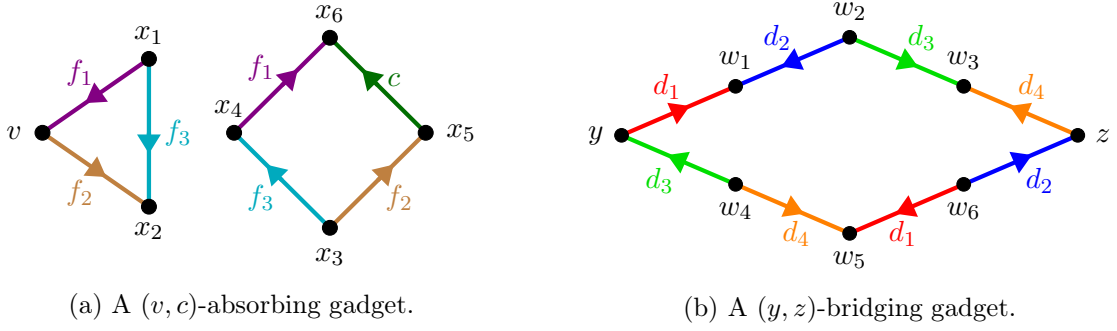
\begin{figure}
   \centering
  \vspace{0pt}
  \begin{subfigure}{.5\linewidth}
    \centering
    \begin{tikzpicture}[scale=1.4]
    \draw [->,ultra thick,mypurple,>=Triangle] (1,0.7)--(0.4,0.28);
    \draw [ultra thick,mypurple] (0.5,0.35)--(0,0);
    \draw [->,ultra thick,brown,>=Triangle] (0,0)--(0.6,-0.42);
    \draw [ultra thick,brown] (0.5,-0.35)--(1,-0.7);
    \draw [->,ultra thick,dblue,>=Triangle] (1,0.7)--(1,-0.2);
    \draw [ultra thick,dblue] (1,0)--(1,-0.7);
    \draw [->,ultra thick,dblue,>=Triangle] (2.7,-0.9)--(2.1,-0.3);
    \draw [ultra thick,dblue] (2.25,-0.45)--(1.8,0);
    \draw [->,ultra thick,mypurple,>=Triangle] (1.8,0)--(2.4,0.6);
    \draw [ultra thick,mypurple] (2.25,0.45)--(2.7,0.9);
    \draw [->,ultra thick,brown,>=Triangle] (2.7,-0.9)--(3.3,-0.3);
    \draw [ultra thick,brown] (3.15,-0.45)--(3.6,0);
    \draw [->,ultra thick,mypink,>=Triangle] (3.6,0)--(3,0.6);
    \draw [ultra thick,mypink] (3.15,0.45)--(2.7,0.9);
     \draw [fill] (0,0) circle [radius=0.07];
     \draw [fill] (1,0.7) circle [radius=0.07];
     \draw [fill] (1,-0.7) circle [radius=0.07];
     \draw [fill] (1.8,0) circle [radius=0.07];
     \draw [fill] (2.7,-0.9) circle [radius=0.07];
     \draw [fill] (2.7,0.9) circle [radius=0.07];
     \draw [fill] (3.6,0) circle [radius=0.07];
     \node [left] at (-0.1,0) {$v$};
     \node [below] at (1,-0.75) {$x_{2}$};
     \node [above] at (1,0.75) {$x_{1}$};
     \node [below] at (2.7,-1) {$x_{3}$};
     \node [above] at (1.75,0.05) {$x_{4}$};
     \node [right] at (3.7,0) {$x_{5}$};
     \node [above] at (2.7,0.95) {$x_{6}$};
     \node [above,mypurple] at (0.35,0.35) {$f_{1}$};
     \node [below,brown] at (0.35,-0.35) {$f_{2}$};
     \node [right,dblue] at (1.05,0) {$f_{3}$};
     \node [below,dblue] at (2.05,-0.4) {$f_{3}$};
     \node [below,brown] at (3.35,-0.4) {$f_{2}$};
     \node [above,mypurple] at (2.05,0.4) {$f_{1}$};
     \node [above,mypink] at (3.3,0.35) {$c$};
    \end{tikzpicture}%
    \caption{A $(v,c)$-absorbing gadget.}
    \label{fig:absgadg}
  \end{subfigure}%
  \begin{subfigure}{.5\linewidth}
    \centering
    \begin{tikzpicture}[scale=1]
   \draw[->,ultra thick,>=Triangle,red] (0,0)--(0.9,0.39);
\draw [ultra thick,red] (0.75,0.325)--(1.5,0.65);
\draw[->,ultra thick,>=Triangle,blue] (3,1.3)--(2.1,0.91);
\draw [ultra thick,blue] (2.25,0.975)--(1.5,0.65);
\draw[->,ultra thick,>=Triangle,mygreen] (3,1.3)--(3.9,0.91);
\draw [ultra thick,mygreen] (3.75,0.975)--(4.5,0.65);
\draw[->,ultra thick,>=Triangle,orange] (6,0)--(5.1,0.39);
\draw [ultra thick,orange] (5.25,0.325)--(4.5,0.65);
\draw[->,ultra thick,>=Triangle,mygreen] (1.5,-0.65)--(0.6,-0.26);
\draw [ultra thick,mygreen] (0.75,-0.325)--(0,0);
\draw[->,ultra thick,>=Triangle,orange] (1.5,-0.65)--(2.4,-1.04);
\draw [ultra thick,orange] (2.25,-0.975)--(3,-1.3);
\draw[->,ultra thick,>=Triangle,red] (4.5,-0.65)--(3.6,-1.04);
\draw [ultra thick,red] (3.75,-0.975)--(3,-1.3);
\draw[->,ultra thick,>=Triangle,blue] (4.5,-0.65)--(5.4,-0.26);
\draw [ultra thick,blue] (5.25,-0.325)--(6,0);
\draw [fill] (0,0) circle [radius=0.09];
\draw [fill] (1.5,0.65) circle [radius=0.09];
\draw [fill] (3,1.3) circle [radius=0.09];
\draw [fill] (4.5,0.65) circle [radius=0.09];
\draw [fill] (1.5,-0.65) circle [radius=0.09];
\draw [fill] (3,-1.3) circle [radius=0.09];
\draw [fill] (4.5,-0.65) circle [radius=0.09];
\draw [fill] (6,0) circle [radius=0.09];
\node [left] at (-0.1,0) {$y$};
\node [right] at (6.1,0) {$z$};
\node [above] at (1.5,0.75) {$w_{1}$};
\node [above] at (3,1.4) {$w_{2}$};
\node [above] at (4.5,0.75) {$w_{3}$};
\node [below] at (1.5,-0.75) {$w_{4}$};
\node [below] at (3,-1.4) {$w_{5}$};
\node [below] at (4.5,-0.75) {$w_{6}$};
\node [above,red] at (0.6,0.35) {$d_{1}$};
\node [above,blue] at (2.05,1) {$d_{2}$};
\node [above,mygreen] at (3.95,1) {$d_{3}$};
\node [above,orange] at (5.4,0.35) {$d_{4}$};
\node [below,mygreen] at (0.5,-0.3) {$d_{3}$};
\node [below,orange] at (2.3,-1) {$d_{4}$};
\node [below,red] at (3.7,-1) {$d_{1}$};
\node [below,blue] at (5.5,-0.3) {$d_{2}$};
    \end{tikzpicture}%
    \caption{A $(y,z)$-bridging gadget.}
    \label{fig:bridgegadg}
\end{subfigure}
    \caption{The key building blocks for the absorbing structure we build in Section~\ref{section:absorption}.}
    \label{fig:gadgets}
\end{figure}
As discussed in Section~\ref{overview-section}, the union of a $(v,c)$-absorbing gadget and an $(x_{4},x_{5})$-bridging gadget (together with some short rainbow directed paths) forms a structure we will call a $(v,c)$-\textit{absorber} (see Figure~\ref{fig:vcabsorber} and Definition~\ref{defn:(v,c)-absorber}), which is the key building block of our absorption structure.
To show that almost all $G\in\Phi(\dirK)$ contain the gadgets we need, we analyze switchings in the probability space corresponding to uniformly random $\mathbf{H}\in\cG_{D}$ (recall that~$\cG_{D}$ is the set of digraphs obtained from the digraphs in~$\Phi(\dirK)$ by deleting all arcs with colour not in~$D$)  for small~$D\subseteq[n]$, before applying Proposition~\ref{prop:wf} to compare this probability space with that of uniformly random $\mathbf{G}\in\Phi(\dirK)$ (see the proof of Lemma~\ref{main-switching-lemma}).

First, we need the following lemma, which asserts that for small~$D\subseteq[n]$, a uniformly random $\mathbf{H}\in\cG_{D}$ does not have too many more arcs than we would expect between any pair of vertex sets, each of size~$|D|$.
\begin{defin}\label{def:quas}
For $D\subseteq[n]$, we say that $H\in\cG_{D}$ is \textit{$\ell$-upper-quasirandom} if $e_{H}(A,B)\leq(1+\ell)|D|^{3}/n$ for all (not necessarily distinct) vertex sets $A,B\subseteq V(H)$ of sizes $|A|=|B|=|D|$.
We define~$\cQ_{D}^{\ell}\coloneqq\{H\in\cG_{D}\colon H\,\,\text{is}\,\,\ell\text{-upper-quasirandom}\}$.
\end{defin}

For a colour~$c\in D$ and uniformly random $\mathbf{H}\in\cG_{D}$, we write~$\mathbf{F}_{c}=F_{c}(\mathbf{H})$ for the random colour class of~$c$ in~$\mathbf{H}$ ($F$ here standing for `factor'), so that~$\mathbf{H}$ is determined by the random variables~$\{\mathbf{F}_{c}\}_{c\in D}$.
\begin{lemma}\label{quasi-lemma}
Suppose $D\subseteq[n]$ has size $|D|=n/10^{6}$.
Fix $c\in D$, let $\mathbf{H}\in\cG_{D}$ be chosen uniformly at random, and let $\mathbf{F}_{c}=F_{c}(\mathbf{H})$.
Then for any outcome~$F$ of~$\mathbf{F}_{c}$ we have
\[
\prob{\mathbf{H}\in\cQ_{D}^{1} \mid \mathbf{F}_{c}=F}\geq 1-\exp(-\Omega(n^{2})).
\]
\end{lemma}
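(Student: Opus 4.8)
The statement to prove is that conditioned on any fixed colour class $F$ of the colour $c$, a uniformly random $\mathbf H \in \cG_D$ with $|D| = n/10^6$ is $1$-upper-quasirandom with probability $1 - \exp(-\Omega(n^2))$. The natural approach is to expose the colour classes $\{\mathbf F_d\}_{d\in D}$ one at a time, but conditioned on $\mathbf F_c = F$; since $\mathbf H$ is then a uniformly random proper arc-colouring of a $|D|$-regular digraph extending the single matching $F$, we can think of choosing the remaining $|D|-1$ perfect matchings. It is cleaner to work with the equivalent model of $|D|\times n$ Latin rectangles: conditioning on $\mathbf F_c = F$ fixes one row (the symbol-$c$ positions), and we want to bound $e_{\mathbf H}(A, B)$ for all pairs $A, B \subseteq [n]$ with $|A| = |B| = |D|$.

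**Main steps.** First I would fix a pair $(A, B)$ of vertex sets of size $|D|$ each, and fix the outcome $F$ of $\mathbf F_c$. Let $Z = Z(A,B)$ be the number of arcs of $\mathbf H$ with tail in $A$ and head in $B$; we have $\Expect{Z \mid \mathbf F_c = F} \le |D|^3/n + O(|D|)$ or so, since each of the $|D|$ colour classes is (close to) a uniformly random perfect matching of $[n]$ and contributes $\approx |A||B|/n = |D|^2/n$ arcs between $A$ and $B$ in expectation — here I would invoke a permanent/entropy estimate, or simply note that a uniformly random perfect matching puts an expected $|A||B|/n$ edges between $A$ and $B$, and the $c$-class contributes at most $|D|$ deterministically. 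Second, I would establish concentration: reveal the colour classes $\mathbf F_{d}$, $d \in D \setminus \{c\}$, in some order, and observe that $Z$ viewed as a function of this sequence of matchings is a martingale whose increments are bounded by a constant (changing one colour class alters $e_{\mathbf H}(A,B)$ by at most $2$, since a perfect matching contributes at most $\min\{|A|,|B|\} \le |D|$ edges — actually one must be careful: swapping one whole matching for another could change the count by up to $|D|$, so instead I would expose the matchings edge-by-edge via a suitable switching/coupling so that each step moves $Z$ by $O(1)$, or use a Lipschitz argument on the $n \times |D|$ array of symbols where changing the symbol in one cell changes $Z$ by at most $1$). Applying Azuma–Hoeffding (or McDiarmid, Theorem~\ref{mcd}) with $\sim n^2$ bounded-increment steps gives $\Prob{Z \ge \Expect{Z} + t \mid \mathbf F_c = F} \le \exp(-\Omega(t^2/n^2))$. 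Taking $t = \delta |D|^3/n = \Theta(n^2)$ for small $\delta$ (note $|D|^3/n = \Theta(n^2)$ since $|D| = \Theta(n)$) makes this $\exp(-\Omega(n^2))$, and since $\Expect{Z} \le (1 + o(1))|D|^3/n$, the event $Z > (1+1)|D|^3/n = 2|D|^3/n$ fails with probability $\exp(-\Omega(n^2))$. Third, I would take a union bound over all choices of $(A,B)$: there are at most $\binom{n}{|D|}^2 \le 4^n = \exp(O(n))$ such pairs, which is absorbed by the $\exp(-\Omega(n^2))$ bound, yielding $\prob{\mathbf H \notin \cQ_D^1 \mid \mathbf F_c = F} \le \exp(-\Omega(n^2))$.

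**Main obstacle.** The delicate point is getting a clean bounded-difference structure for the conditioned random variable. Exposing colour classes as whole matchings gives increments of size up to $|D| = \Theta(n)$, which with $|D|$ classes yields only $\exp(-\Omega(t^2 / n^3))$ — not strong enough at scale $t = \Theta(n^2)$. The fix is to reveal the rectangle cell-by-cell (an $n \times |D|$ array of symbols, $|D|\cdot n = \Theta(n^2)$ cells, each change of one cell's symbol altering $Z$ by at most $1$), but then the entries are \emph{not} independent — they must form a Latin rectangle — so one cannot apply McDiarmid directly to the independent-coordinates model. The standard resolution is to use a switching-based exposure martingale (revealing rows one at a time and within a row processing via an Azuma argument on the intermediate partial rectangle), or to appeal to the concentration machinery for random Latin rectangles already in the literature; I expect the author to set up an explicit edge-revealing martingale where each step is an allowable `switch' changing $Z$ by $O(1)$, with $\Theta(n^2)$ steps. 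Verifying the bounded-increment property of this switching martingale, and checking that the number of steps is $\Theta(n^2)$ rather than $\Theta(n)$, is the crux; once that is in place the union bound over $\exp(O(n))$ pairs $(A,B)$ is routine.
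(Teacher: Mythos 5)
Your proposal takes a genuinely different route from the paper. You plan to compute $\Expect{e_{\mathbf H}(A,B) \mid \mathbf F_c = F} \approx |D|^3/n$ and then establish concentration around the mean via a bounded-difference martingale, finishing with a union bound over all pairs $(A,B)$. The paper instead runs a direct switching argument on level sets (the same style as Lemma~\ref{masterswitch2}): it stratifies $\{H \in \cG_D : F_c(H) = F\}$ into classes $M_s = \{H : e_H(A,B) = s\}$, introduces a ``rotation'' operation that replaces a pair of same-coloured arcs $ab, vw$ (with $a\in A$, $b\in B$, $v\notin A$, $w\notin B$, and colour $\ne c$) by $aw, vb$ --- moving $H\in M_s$ to some $H'\in M_{s-1}$ while leaving the $c$-class $F$ untouched --- and double-counts these rotations to show $|M_s|/|M_{s-1}| \le 9/10$ once $s \ge \tfrac{3}{2}|D|^3/n$. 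Iterating over $\Theta(n^2)$ values of $s$ gives the $\exp(-\Omega(n^2))$ tail for a fixed pair $(A,B)$ directly, with no concentration inequality at all; the union bound over $\binom{n}{|D|}^2 \le \exp(O(n))$ pairs is then absorbed.

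You correctly diagnose the obstruction in the concentration route: exposing whole colour classes gives Lipschitz constant $\Theta(n)$ over only $|D|$ steps (yielding the useless bound $\exp(-\Omega(n))$), while exposing cells of the $|D|\times n$ array one at a time gives Lipschitz constant $O(1)$ over $\Theta(n^2)$ steps but loses independence, so McDiarmid (Theorem~\ref{mcd}) does not apply. However, you do not resolve this: you write that ``verifying the bounded-increment property of this switching martingale \dots\ is the crux'' and stop, so the central step of your proof remains open. The level-set switching argument is exactly the device the paper uses to sidestep that issue --- rather than constructing a martingale that respects the Latin-rectangle constraints, it compares $|M_s|$ with $|M_{s-1}|$ by counting ordered switches in both directions, so the dependence structure never has to be tamed. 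Your route could most likely be completed (for instance via a row-exposure martingale together with an Azuma-type inequality for random permutations), but that machinery is heavier than what the paper needs and leaves you with a real hole as written.
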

The authors of~\cite{GKKO20} proved a lemma (\cite[Lemma 6.3]{GKKO20}) analogous to Lemma~\ref{quasi-lemma} in the undirected setting.
The proof of Lemma~\ref{quasi-lemma} is similar so we omit it here.  In the appendix\NOTAPPENDIX{ of the arXiv version of the paper}, we describe how the proof of \cite[Lemma 6.3]{GKKO20} can be modified to obtain a proof of Lemma~\ref{quasi-lemma}.

We condition on versions of upper-quasirandomness when we are using switching arguments to show that almost all $H\in\cG_{D}$ admit many absorbing gadgets and bridging gadgets.
Further, we will need that~$H$ does not have many $c$-loops in order to find many $(v,c)$-absorbing gadgets, for any $v\in V(H)$. 
Lemma~\ref{quasi-lemma} enables us to `uncondition' from these two events, so as to study simply the probability that a uniformly random~$\mathbf{H}$ has many absorbing gadgets. 

Since, as discussed in Section~\ref{overview-section}, we eventually piece together gadgets in a greedy fashion to build an absorbing structure in a typical $G\in\Phi(\dirK)$, it will be important to know that we can find collections~$\cA$ of gadgets which are `well-spread', in that no vertex or colour of~$G$ is contained in too many $A\in\cA$.
We formalise this notion in the following definition.
\begin{defin}\label{def:wellspread}
Suppose that~$G$ is an $n$-vertex directed, arc-coloured digraph with vertices~$V$ and colours~$C$.
Fix $v\in V$, $c\in C$, and fix $y,z\in V$ distinct.
We say that a collection~$\cA$ of $(v,c)$-absorbing gadgets in~$G$ is \textit{well-spread} if for all $u\in V\setminus\{v\}$ and $d\in C\setminus\{c\}$, there are at most~$n$ distinct $A\in\cA$ which contain~$u$, and at most~$n$ distinct $A\in\cA$ which contain~$d$.
We say that a collection~$\cB$ of $(y,z)$-bridging gadgets in~$G$ is \textit{well-spread} if for all $u\in V\setminus\{y,z\}$ and $d\in C$, there are at most~$n$ distinct $B\in\cB$ which contain~$u$, and at most~$n$ distinct $B\in\cB$ which contain~$d$.
\end{defin}
The next lemma ensures that almost all $G\in\Phi(\dirK)$ contain the collections of well-spread absorbing gadgets that we need.
\begin{lemma}\label{masterswitch1}
Let $\mathbf{G}\in\Phi(\dirK)$ be chosen uniformly at random, and let~$\cE$ be the event that for all $v,c\in[n]$,~$\mathbf{G}$ contains a well-spread collection of at least~$n^{2}/2^{100}$ $(v,c)$-absorbing gadgets.
Let~$\cC$ be the event that no colour class of~$\mathbf{G}$ has more than~$n/10^{9}$ loops.
Then $\prob{\cE\mid\cC}\geq 1-\exp(-\Omega(n^{2}))$, and in particular, $\prob{\cE}\geq 1-\exp(-\Omega(n\log n))$ by Lemma~\ref{fewloops}.
\end{lemma}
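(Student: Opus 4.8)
The plan is to pass from $\Phi(\dirK)$ to the much more tractable probability space of a uniformly random $\mathbf H\in\cG_D$ for a suitably small $D\subseteq[n]$, prove the statement there by switching arguments, and transfer back using Proposition~\ref{prop:wf}. Fix $v,c\in[n]$. Since a $(v,c)$-absorbing gadget uses only the four colours $f_1,f_2,f_3,c$, any $(v,c)$-absorbing gadget of $G|_D$ with $c\in D$ is also one of $G$, so it suffices to find the required well-spread collection inside $\mathbf G|_D$; fix $D\ni c$ with $|D|=n/10^6$. Write $\cC_c$ for the event that the $c$-colour class of $\mathbf G$ has at most $n/10^9$ loops, and $\cE'_{v,c}$ for the event that $\mathbf G|_D$ contains a well-spread collection of at least $n^2/2^{100}$ $(v,c)$-absorbing gadgets; note $\cC\subseteq\cC_c$, that $\cE'_{v,c}$ and $\cC_c$ depend only on $\mathbf H:=\mathbf G|_D$, and $\prob{\cC}\ge 1/2$ for large $n$ by Lemma~\ref{fewloops}. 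By invariance of the law of $\mathbf G$ under permutations of the colour set, the failure probability to bound is the same for every pair $(v,c)$, so a union bound over the $n^2$ pairs costs only a factor $n^2$. Since, for uniform $\mathbf G$, the induced law of $\mathbf H=\mathbf G|_D$ on $\cG_D$ assigns $H$ probability proportional to $\comp(H)$, Proposition~\ref{prop:wf} shows this law is within a factor $\exp(O(n\log^2 n))$ of the uniform law on $\cG_D$, pointwise. It therefore suffices to prove that for a uniformly random $\mathbf H\in\cG_D$,
\[
\prob{\overline{\cE'_{v,c}}\mid \cC_c}\le\exp(-\Omega(n^2)),
\]
as multiplying by $\exp(O(n\log^2 n))$ and by $n^2$ preserves an $\exp(-\Omega(n^2))$ bound; then $\prob{\overline\cE\mid\cC}\le\exp(-\Omega(n^2))$ and $\prob{\overline\cE}\le\exp(-\Omega(n^2))+\prob{\overline\cC}=\exp(-\Omega(n\log n))$ follow.

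To deal with the conditioning on $\cC_c$, fix a $c$-colour class $F$ with at most $n/10^9$ loops and condition on $\mathbf F_c=F$. By Lemma~\ref{quasi-lemma}, $\prob{\mathbf H\in\cQ_D^1\mid\mathbf F_c=F}\ge 1-\exp(-\Omega(n^2))$, so it is enough to show that, conditionally on $\mathbf F_c=F$ and $\mathbf H\in\cQ_D^1$, the event $\cE'_{v,c}$ holds with probability $1-\exp(-\Omega(n^2))$; a union bound over the $n$ choices of $v$ costs only a further factor $n$. I would establish $\cE'_{v,c}$ by simply taking the collection of \emph{all} $(v,c)$-absorbing gadgets of $\mathbf H$, via two estimates: (i) the total number $N$ of such gadgets is at least $n^2/2^{100}$; and (ii) for every vertex $u\ne v$ together with each of the six roles $u\in\{x_1,\dots,x_6\}$, and for every colour $d\ne c$ together with each of the three roles $d\in\{f_1,f_2,f_3\}$, the number of $(v,c)$-absorbing gadgets of $\mathbf H$ in which $u$ (resp.\ $d$) plays that role is at most $n/2^{10}$. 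Given (i) and (ii), no vertex lies in more than $6\cdot n/2^{10}<n$ of these gadgets and no colour in more than $3\cdot n/2^{10}<n$, so the full collection is well-spread and of the required size.

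Both (i) and (ii) are proved by switchings inside the set $\cH$ of $H\in\cG_D$ with $F_c(H)=F$ and $H\in\cQ_D^1$. For (i), partition $\cH$ according to the value of $N(H)$; a \emph{forward switching} rewires a bounded number of arcs of $H$ along a short colour-alternating walk so as to create one additional $(v,c)$-absorbing gadget while remaining in $\cH$, and a \emph{backward switching} undoes this. Counting the forward switchings available from a given $H$ with $N(H)=m$ — which, whenever $m<n^2/2^{99}$, is a fixed positive power of $n$ exceeding the number of backward switchings into $H$, with $H\in\cQ_D^1$ used to discard the bounded number of ways a collision among the new vertices or colours could occur and $F$ having few loops used to ensure enough non-loop $c$-arcs — shows $|\{H\in\cH:N(H)=m\}|$ decays geometrically as $m$ runs down through $[\,n^2/2^{100},n^2/2^{99}\,]$. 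Summing these $\Omega(n^2)$ geometric factors gives $\prob{N<n^2/2^{100}\mid\mathbf F_c=F,\ \mathbf H\in\cQ_D^1}\le\exp(-\Omega(n^2))$. Estimate (ii) is similar but easier: for fixed $u$ (or $d$) and a fixed role, switchings destroying a gadget in which $u$ (or $d$) plays that role show the corresponding count exceeds $n/2^{10}$ only with probability $\exp(-\Omega(n^2))$, and the union bound over the $O(n)$ choices of $u,d$ and the $O(1)$ roles is absorbed.

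The main obstacle is designing the forward and backward switchings for (i). A $(v,c)$-absorbing gadget has seven arcs with three pairs forced to share the colours $f_1,f_2,f_3$, so a single switching must install all of these while preserving both properness and $|D|$-regularity of the colouring — so that the result again lies in $\cG_D$ — and while staying within $\cQ_D^1$ and the few-$c$-loops condition; making the number of forward switchings genuinely larger than that of backward switchings requires a careful case analysis of the ways collisions among the newly created vertices and colours can arise, which is precisely where $H\in\cQ_D^1$ enters. This is the directed analogue of the argument underlying~\cite[Lemma~6.3]{GKKO20}; as in that paper, it is cleanest to run the switchings for the absorbing gadget separately from the bookkeeping for well-spreadness, and I expect only minor changes from the undirected setting, chiefly in tracking arc directions when specifying the alternating walks used by the switchings.
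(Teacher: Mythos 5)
Your overall architecture — transferring from $\Phi(\dirK)$ to $\cG_D$ for $|D|=n/10^6$, conditioning on the $c$-colour class and upper-quasirandomness via Lemma~\ref{quasi-lemma}, counting gadgets by switchings in $\cG_D$, and transferring back with Proposition~\ref{prop:wf} at a cost of $\exp(O(n\log^2 n))$ — is the same as the paper's, and your estimate (i) (that the total number of $(v,c)$-absorbing gadgets is at least $n^2/2^{100}$) is indeed provable this way with failure probability $\exp(-\Omega(n^2))$, precisely because that count varies over a range of $\Omega(n^2)$.

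The gap is in estimate (ii). You take the collection of \emph{all} $(v,c)$-absorbing gadgets and try to show separately, by switching, that for every fixed $u\ne v$ and each role, $u$ lies in at most $n/2^{10}$ gadgets, with failure probability $\exp(-\Omega(n^2))$. This cannot work. For a uniformly random $\mathbf H\in\cG_D$, the expected number of gadgets with $u=x_1$ is $\Theta(|D|)=\Theta(n/10^6)$, while the maximum possible is $\Theta(|D|n)$. A switching that removes one such gadget at a time from an $H$ with $m$ of them has $\Theta(m)$ times a fixed power of $n$ forward moves, so the ratio $|T_m|/|T_{m-1}|$ is at best of order $\mu/m$ with $\mu=\Theta(|D|)$, yielding a Poisson-type tail $\prob{\text{count}_u\ge t}\lesssim (e\mu/t)^{t}$. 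With $t=n/2^{10}$ — a constant multiple of $|D|$ — this gives at most $\exp(-\Omega(n))$, or $\exp(-\Omega(n\log n))$ with more care, but never $\exp(-\Omega(n^2))$. (Intuitively, the event that $u$ appears in many gadgets in role $x_1$ is driven by near-coincidences between compositions of a handful of colour-permutations, a low-codimension event whose probability one should not expect to be $\exp(-\Omega(n^2))$.) Since the transfer via Proposition~\ref{prop:wf} costs $\exp(O(n\log^2 n))$, the error in estimate (ii) must be $\exp(-\omega(n\log^2 n))$ to survive, so the argument breaks. Your union bound over $u$ and $d$ does not rescue this — the single-$(u,\text{role})$ estimate is already too weak.

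The paper sidesteps this by never proving well-spreadness as a separate probabilistic estimate. Instead it counts \emph{distinguishable} $(v,c,\cP)$-gadgets (a gadget is distinguishable if it is the only one containing certain of its arcs). The collection of distinguishable gadgets is \emph{deterministically} well-spread: each vertex is incident to only $O(|D|)\le n$ arcs with colours in the relevant parts of $\cP$, and each such arc lies in at most one distinguishable gadget, so no vertex or colour can occur in more than $n$ of them. The switching then only needs the single lower bound on the number of distinguishable gadgets — which is the $\Omega(n^2)$-range, $\exp(-\Omega(n^2))$-error estimate that switchings genuinely deliver. To fix your proof you should adopt this device: fold well-spreadness into the definition of what you count, rather than trying to certify it afterwards with a separate tail bound.
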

As with Lemma~\ref{quasi-lemma}, the authors of~\cite{GKKO20} proved an analogous lemma (\cite[Lemma 3.8]{GKKO20}) in the undirected setting with a similar proof, so we omit it here but provide details in the appendix\NOTAPPENDIX{ of the arXiv version of the paper} of how the proof of \cite[Lemma 3.8]{GKKO20} may be modified to prove Lemma~\ref{masterswitch1}.

%
The rest of this section is dedicated to showing that almost all $G\in\Phi(\dirK)$ have large well-spread collections of bridging gadgets (recall Figure~\ref{fig:bridgegadg}). For technical reasons that make the switching argument a little easier to analyze, we instead actually look for a slightly more special structure.
In particular, we add some extra arcs so that all vertices we find are in the neighbourhood of~$y$ or of~$z$, we partition the colours to limit the number of `roles' certain arcs can play when we apply the switching operation, and we introduce the notion of \textit{distinguishability}, which will be useful when arguing that the gadgets we find are well-spread.
\begin{defin}\label{def:bridgestuff}
Let $D\subseteq[n]$, let $H\in\cG_{D}$, and let $\cP=(D_{i})_{i=1}^{6}$ be an equitable (ordered) partition of~$D$ into six parts. Let $y,z\in[n]$ be distinct vertices.
\begin{itemize}
\item We say that a subgraph $B\subseteq H$ is a $(y,z,\cP)$\textit{-bridge} (see Figure~\ref{fig:yzpbridge}) if~$B$ is the union of a $(y,z)$-bridging gadget~$B'$ (with vertex- and colour-labelling as in Definition~\ref{def:bridgegadg}) and the extra arcs~$yw_{2}$,~$zw_{5}$, such that $d_{i}\in D_{i}$ for all $i\in[4]$,~$\phi_{H}(yw_{2})\in D_{5}$, and~$\phi_{H}(zw_{5})\in D_{6}$;
\item we say that~a $(y,z,\cP)$-bridge~$B$ is \textit{distinguishable} in~$H$ if~$B$ is the only $(y,z,\cP)$-bridge in~$H$ containing any of the arcs~$w_{2}w_{1},w_{2}w_{3},w_{4}w_{5},w_{6}w_{5}$;
\item we write~$r_{(y,z,\cP)}(H)$ for the number of distinguishable $(y,z,\cP)$-bridges in~$H$;
\item for $s\in[n|D|]_{0}$, we write~$M_{s}^{(y,z,\cP)}$ for the set of $H\in\cG_{D}$ such that $r_{(y,z,\cP)}(H)=s$ and $e_{H}(A,B)\leq 2|D|^{3}/n +12s$ for all $A,B\subseteq[n]$ of size $|A|=|B|=|D|$, and we define $\widehat{\cQ}_{D}^{(y,z,\cP)}\coloneqq\bigcup_{s=0}^{n|D|}M_{s}^{(y,z,\cP)}$.
\end{itemize}
\end{defin}
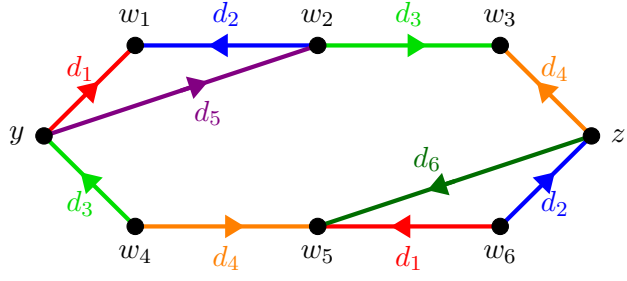
\begin{figure}
\centering
\begin{tikzpicture} [scale=1.2]
\draw[->,ultra thick,>=Triangle,red] (0,0)--(0.6,0.6);
\draw [ultra thick,red] (0.5,0.5)--(1,1);
\draw[->,ultra thick,>=Triangle,blue] (3,1)--(1.8,1);
\draw [ultra thick,blue] (2,1)--(1,1);
\draw[->,ultra thick,>=Triangle,mygreen] (3,1)--(4.2,1);
\draw [ultra thick,mygreen] (4,1)--(5,1);
\draw[->,ultra thick,>=Triangle,orange] (6,0)--(5.4,0.6);
\draw [ultra thick,orange] (5.5,0.5)--(5,1);
\draw[->,ultra thick,>=Triangle,mygreen] (1,-1)--(0.4,-0.4);
\draw [ultra thick,mygreen] (0.5,-0.5)--(0,0);
\draw[->,ultra thick,>=Triangle,orange] (1,-1)--(2.2,-1);
\draw [ultra thick,orange] (2,-1)--(3,-1);
\draw[->,ultra thick,>=Triangle,red] (5,-1)--(3.8,-1);
\draw [ultra thick,red] (4,-1)--(3,-1);
\draw[->,ultra thick,>=Triangle,blue] (5,-1)--(5.6,-0.4);
\draw [ultra thick,blue] (5.5,-0.5)--(6,0);
\draw[->,ultra thick,>=Triangle,mypurple] (0,0)--(1.8,0.6);
\draw [ultra thick,mypurple] (1.5,0.5)--(3,1);
\draw[->,ultra thick,>=Triangle,mypink] (6,0)--(4.2,-0.6);
\draw [ultra thick,mypink] (4.5,-0.5)--(3,-1);
\draw [fill] (0,0) circle [radius=0.09];
\draw [fill] (1,1) circle [radius=0.09];
\draw [fill] (3,1) circle [radius=0.09];
\draw [fill] (5,1) circle [radius=0.09];
\draw [fill] (1,-1) circle [radius=0.09];
\draw [fill] (3,-1) circle [radius=0.09];
\draw [fill] (5,-1) circle [radius=0.09];
\draw [fill] (6,0) circle [radius=0.09];
\node [left] at (-0.1,0) {$y$};
\node [right] at (6.1,0) {$z$};
\node [above] at (1,1.1) {$w_{1}$};
\node [above] at (3,1.1) {$w_{2}$};
\node [above] at (5,1.1) {$w_{3}$};
\node [below] at (1,-1.1) {$w_{4}$};
\node [below] at (3,-1.1) {$w_{5}$};
\node [below] at (5,-1.1) {$w_{6}$};
\node [above,red] at (0.4,0.5) {$d_{1}$};
\node [above,blue] at (2,1.1) {$d_{2}$};
\node [above,mygreen] at (4,1.1) {$d_{3}$};
\node [above,orange] at (5.6,0.5) {$d_{4}$};
\node [below,mygreen] at (0.4,-0.5) {$d_{3}$};
\node [below,orange] at (2,-1.1) {$d_{4}$};
\node [below,red] at (4,-1.1) {$d_{1}$};
\node [below,blue] at (5.6,-0.5) {$d_{2}$};
\node [below,mypurple] at (1.8,0.5) {$d_{5}$};
\node [above,mypink] at (4.2,-0.5) {$d_{6}$};
\end{tikzpicture}
\caption{A $(y,z,\cP)$-bridge, with $\cP=(D_{i})_{i=1}^{6}$ and $d_{i}\in D_{i}$ for each $i\in[6]$.}
\label{fig:yzpbridge}
\end{figure}
We frequently drop the $(y,z,\cP)$-notation in the terminology introduced above when the tuple~$(y,z,\cP)$ is clear from context.
For every distinct $y,z\in[n]$ and equitable partition $\cP=(D_{i})_{i=1}^{6}$,
\begin{equation}\label{eq:qhat}
\cQ_{D}^{1}\subseteq\widehat{\cQ}_{D}, \text{ and if $s \leq |D|^4 / (10^{24}n^2)$, then $M_s \subseteq \cQ_D^{2}$}.
\end{equation}
In Lemma~\ref{masterswitch2} we use switchings on some $H\in\cG_{D}$ to produce some $H'\in \cG_{D}$ having $r(H')=r(H)+1$.
As mentioned earlier, we condition on upper-quasirandomness in this lemma; more specifically, we will condition that $\mathbf{H}\in\widehat{\cQ}_{D}$.
The notion of distinguishability of $(y,z,\cP)$-bridges is useful because, as we show in Lemma~\ref{main-switching-lemma},~Claim~\ref{claim:spreadgadgets}, a collection of distinguishable $(y,z,\cP)$-bridges in~$G|_{D}$ is necessarily well-spread (recall Definition~\ref{def:wellspread}).

We now discuss the switching operation that forms the backbone of the proof of Lemma~\ref{masterswitch2}.
\begin{defin}\label{def:twistsystem}
Let $D\subseteq[n]$, let $H\in\cG_{D}$, let $\cP=(D_{i})_{i=1}^{6}$ be a partition of~$D$ and suppose $y,z\in[n]$ are distinct.
Let $u_{1},\dots,u_{6},u'_{1},\dots,u'_{8},u''_{1},\dots,u''_{8}\in[n]\setminus\{y,z\}$, where $u_{1},\dots,u_{6},u''_{1},\dots,u''_{8}$ are distinct and $\{u'_{1},\dots,u'_{8}\}\cap\{u_{1},\dots,u_{6},u''_{1},\dots,u''_{8}\}=\emptyset$.
Let $U^{\text{int}}\coloneqq\{u_{1},u_{2},\dots,u_{6}\}, U^{\text{mid}}\coloneqq\{u'_{1},u'_{2},\dots,u'_{8}\}, U^{\text{ext}}\coloneqq\{u''_{1},u''_{2},\dots,u''_{8}\}$.\COMMENT{Many of these repetitions are avoidable, but we don't have to avoid them, so this may keep the argument more concise. One could also have allowed~$U^{\text{ext}}$ to have repeated elements, which means the `floating' extra arcs in the switching cycles can be loops - but one runs into more delicate issues, for example clearly we still need at least four distinct vertices in~$U^{\text{ext}}$ for example. This just seems the neatest way to go.}
Then we say that a subgraph $T\subseteq H[\{y,z\}\cup U^{\text{int}}\cup U^{\text{mid}}\cup U^{\text{ext}}]$ is a \textit{twist system} (see Figure~\ref{fig:twistsystem}) of~$H$ if:
\begin{enumerate}[label=\upshape(\roman*)]
    \item $E(T)=\{yu_{1},yu_{2},u_{4}y,zu_{3},zu_{5},u_{6}z,u'_{1}u_{1},u_{2}u'_{2},u_{2}u'_{3},u'_{4}u_{3},u_{4}u'_{5},u'_{6}u_{5},u'_{7}u_{5},u_{6}u'_{8},u''_{2}u''_{1},\newline u''_{3}u''_{4},u''_{5}u''_{6},u''_{8}u''_{7}\}$;
    \item $\phi_{H}\left(yu_{1}\right)=\phi_{H}\left(u'_{7}u_{5}\right)=\phi_{H}\left(u''_{8}u''_{7}\right)=\phi_{H}\left(u_{6}u'_{8}\right)\in D_{1}$;
    \item $\phi_{H}\left(u_{6}z\right)=\phi_{H}\left(u_{2}u'_{2}\right)=\phi_{H}\left(u''_{2}u''_{1}\right)=\phi_{H}\left(u'_{1}u_{1}\right)\in D_{2}$;
    \item $\phi_{H}\left(u_{4}y\right)=\phi_{H}\left(u_{2}u'_{3}\right)=\phi_{H}\left(u''_{3}u''_{4}\right)=\phi_{H}\left(u'_{4}u_{3}\right)\in D_{3}$;
    \item $\phi_{H}\left(zu_{3}\right)=\phi_{H}\left(u_{4}u'_{5}\right)=\phi_{H}\left(u''_{5}u''_{6}\right)=\phi_{H}\left(u'_{6}u_{5}\right)\in D_{4}$;
    \item $\phi_{H}\left(yu_{2}\right)\in D_{5}$ and $\phi_{H}\left(zu_{5}\right)\in D_{6}$;
    \item $u_{2}u_{1},u'_{1}u''_{1},u''_{2}u'_{2},u_{2}u_{3},u''_{3}u'_{3},u'_{4}u''_{4},u_{4}u_{5},u''_{5}u'_{5},u'_{6}u''_{6},u_{6}u_{5},u'_{7}u''_{7},u''_{8}u'_{8}\notin E(H)$.
\end{enumerate}
For a twist system $T\subseteq H$, we define~$\text{twist}_{T}(H)$ to be the coloured digraph obtained from~$H$ by deleting the arcs $u'_{1}u_{1}, u''_{2}u''_{1}, u_{2}u'_{2}, u_{2}u'_{3}, u''_{3}u''_{4}, u'_{4}u_{3}, u_{4}u'_{5}, u''_{5}u''_{6}, u'_{6}u_{5}, u_{6}u'_{8}, u''_{8}u''_{7}, u'_{7}u_{5}$, and adding the arcs $u_{6}u_{5}, u'_{7}u''_{7}, u''_{8}u'_{8}$ each in colour~$\phi_{H}\left(yu_{1}\right)$, the arcs $u_{4}u_{5}, u'_{6}u''_{6}, u''_{5}u'_{5}$ each in colour~$\phi_{H}\left(zu_{3}\right)$, the arcs $u_{2}u_{3}, u'_{4}u''_{4}, u''_{3}u'_{3}$ each in colour~$\phi_{H}\left(u_{4}y\right)$, and the arcs $u_{2}u_{1}, u'_{1}u''_{1}, u''_{2}u'_{2}$ each in colour~$\phi_{H}\left(u_{6}z\right)$.
The $(y,z,\cP)$-bridge in~$\text{twist}_{T}(H)$ with arc set $\{yu_{1},yu_{2}, u_{2}u_{1}, u_{2}u_{3},zu_{3},\newline u_{6}z, zu_{5}, u_{6}u_{5},u_{4}u_{5}, u_{4}y\}$ is called the \textit{canonical} $(y,z,\cP)$\textit{-bridge} of the twist.
\end{defin}
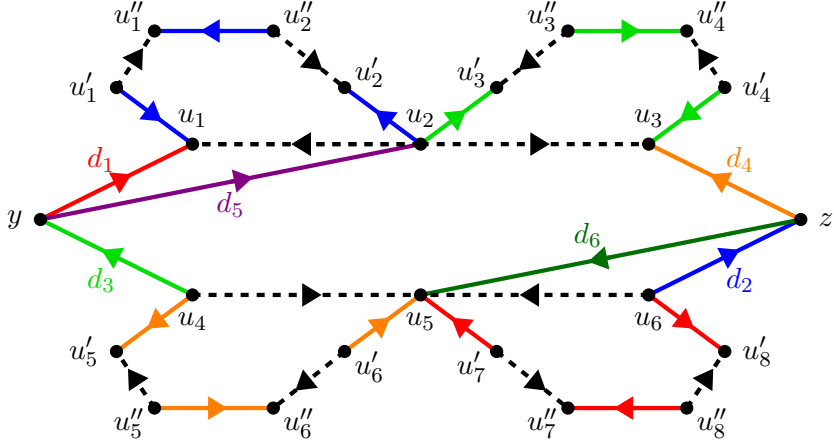
\begin{figure}
\centering
\begin{tikzpicture} 
\draw [->,ultra thick,>=Triangle,red] (0,0)--(1.2,0.6);
\draw [ultra thick, red] (1,0.5)--(2,1);
\draw [->,ultra thick,>=Triangle,blue] (1,1.75)--(1.6,1.3);
\draw [ultra thick, blue] (1.5,1.375)--(2,1);
\draw [->,ultra thick,>=Triangle,blue] (3.0625,2.5)--(2.1,2.5);
\draw [ultra thick, blue] (2.4,2.5)--(1.5,2.5);
\draw [->,ultra thick,>=Triangle,blue] (5,1)--(4.4,1.45);
\draw [ultra thick, blue] (4.5,1.375)--(4,1.75);
\draw [->,ultra thick,>=Triangle,mygreen] (9,1.75)--(8.4,1.3);
\draw [ultra thick, mygreen] (8.5,1.375)--(8,1);
\draw [->,ultra thick,>=Triangle,mygreen] (6.9375,2.5)--(7.9,2.5);
\draw [ultra thick, mygreen] (7.6,2.5)--(8.5,2.5);
\draw [->,ultra thick,>=Triangle,mygreen] (5,1)--(5.6,1.45);
\draw [ultra thick, mygreen] (5.5,1.375)--(6,1.75);
\draw [->,ultra thick,>=Triangle,orange] (10,0)--(8.8,0.6);
\draw [ultra thick, orange] (9,0.5)--(8,1);
\draw [->,ultra thick,>=Triangle,mygreen] (2,-1)--(0.8,-0.4);
\draw [ultra thick, mygreen] (1,-0.5)--(0,0);
\draw [->,ultra thick,>=Triangle,orange] (2,-1)--(1.4,-1.45);
\draw [ultra thick, orange] (1.5,-1.375)--(1,-1.75);
\draw [->,ultra thick,>=Triangle,orange] (1.5,-2.5)--(2.4625,-2.5);
\draw [ultra thick, orange] (2.1625,-2.5)--(3.0625,-2.5);
\draw [->,ultra thick,>=Triangle,orange] (4,-1.75)--(4.6,-1.3);
\draw [ultra thick, orange] (4.5,-1.375)--(5,-1);
\draw [->,ultra thick,>=Triangle,red] (8,-1)--(8.6,-1.45);
\draw [ultra thick, red] (8.5,-1.375)--(9,-1.75);
\draw [->,ultra thick,>=Triangle,red] (8.5,-2.5)--(7.5375,-2.5);
\draw [ultra thick, red] (7.8375,-2.5)--(6.9375,-2.5);
\draw [->,ultra thick,>=Triangle,red] (6,-1.75)--(5.4,-1.3);
\draw [ultra thick, red] (5.5,-1.375)--(5,-1);
\draw [->,ultra thick,>=Triangle,blue] (8,-1)--(9.2,-0.4);
\draw [ultra thick, blue] (9,-0.5)--(10,0);
\draw [->,ultra thick,>=Triangle,mypurple] (0,0)--(2.8,0.56);
\draw [ultra thick, mypurple] (2.5,0.5)--(5,1);
\draw [->,ultra thick,>=Triangle,mypink] (10,0)--(7.2,-0.56);
\draw [ultra thick, mypink] (7.5,-0.5)--(5,-1);
\draw [->,dashed,ultra thick,>=Triangle] (5,1)--(3.3,1);
\draw [dashed,ultra thick] (3.5,1)--(2,1);
\draw [->,dashed,ultra thick,>=Triangle] (5,1)--(6.7,1);
\draw [dashed,ultra thick] (6.5,1)--(8,1);
\draw [->,dashed,ultra thick,>=Triangle] (2,-1)--(3.7,-1);
\draw [dashed,ultra thick] (3.5,-1)--(5,-1);
\draw [->,dashed,ultra thick,>=Triangle] (8,-1)--(6.3,-1);
\draw [dashed,ultra thick] (6.5,-1)--(5,-1);
\draw [->,dashed,ultra thick,>=Triangle] (1,1.75)--(1.35,2.275);
\draw [dashed,ultra thick] (1.25,2.125)--(1.5,2.5);
\draw [->,dashed,ultra thick,>=Triangle] (5,1)--(3.3,1);
\draw [dashed,ultra thick] (3.5,1)--(2,1);
\draw [->,dashed,ultra thick,>=Triangle] (3.0625,2.5)--(3.7125,1.98);
\draw [dashed,ultra thick] (3.53125,2.125)--(4,1.75);
\draw [->,dashed,ultra thick,>=Triangle] (9,1.75)--(8.65,2.275);
\draw [dashed,ultra thick] (8.75,2.125)--(8.5,2.5);
\draw [->,dashed,ultra thick,>=Triangle] (6.9375,2.5)--(6.2875,1.98);
\draw [dashed,ultra thick] (6.46875,2.125)--(6,1.75);
\draw [->,dashed,ultra thick,>=Triangle] (1.5,-2.5)--(1.15,-1.975);
\draw [dashed,ultra thick] (1.25,-2.125)--(1,-1.75);
\draw [->,dashed,ultra thick,>=Triangle] (4,-1.75)--(3.35,-2.27);
\draw [dashed,ultra thick] (3.53125,-2.125)--(3.0625,-2.5);
\draw [->,dashed,ultra thick,>=Triangle] (8.5,-2.5)--(8.85,-1.975);
\draw [dashed,ultra thick] (8.75,-2.125)--(9,-1.75);
\draw [->,dashed,ultra thick,>=Triangle] (6,-1.75)--(6.65,-2.27);
\draw [dashed,ultra thick] (6.46875,-2.125)--(6.9375,-2.5);
\draw [fill] (0,0) circle [radius=0.08];
\draw [fill] (2,1) circle [radius=0.08];
\draw [fill] (5,1) circle [radius=0.08];
\draw [fill] (8,1) circle [radius=0.08];
\draw [fill] (10,0) circle [radius=0.08];
\draw [fill] (2,-1) circle [radius=0.08];
\draw [fill] (5,-1) circle [radius=0.08];
\draw [fill] (8,-1) circle [radius=0.08];
\draw [fill] (1,1.75) circle [radius=0.08];
\draw [fill] (1.5,2.5) circle [radius=0.08];
\draw [fill] (3.0625,2.5) circle [radius=0.08];
\draw [fill] (4,1.75) circle [radius=0.08];
\draw [fill] (9,1.75) circle [radius=0.08];
\draw [fill] (8.5,2.5) circle [radius=0.08];
\draw [fill] (6.9375,2.5) circle [radius=0.08];
\draw [fill] (6,1.75) circle [radius=0.08];
\draw [fill] (1,-1.75) circle [radius=0.08];
\draw [fill] (1.5,-2.5) circle [radius=0.08];
\draw [fill] (3.0625,-2.5) circle [radius=0.08];
\draw [fill] (4,-1.75) circle [radius=0.08];
\draw [fill] (9,-1.75) circle [radius=0.08];
\draw [fill] (8.5,-2.5) circle [radius=0.08];
\draw [fill] (6.9375,-2.5) circle [radius=0.08];
\draw [fill] (6,-1.75) circle [radius=0.08];
\node [left] at (-0.1,0) {$y$};
\node [above] at (2,1.1) {$u_{1}$};
\node [above] at (5,1.1) {$u_{2}$};
\node [above] at (8,1.1) {$u_{3}$};
\node [below] at (2,-1.1) {$u_{4}$};
\node [below] at (5,-1.1) {$u_{5}$};
\node [below] at (8,-1.1) {$u_{6}$};
\node [right] at (10.1,0) {$z$};
\node [left] at (0.9,1.75) {$u'_{1}$};
\node [right] at (4,1.95) {$u'_{2}$};
\node [left] at (6,1.95) {$u'_{3}$};
\node [right] at (9.1,1.75) {$u'_{4}$};
\node [left] at (0.9,-1.75) {$u'_{5}$};
\node [right] at (4,-1.95) {$u'_{6}$};
\node [left] at (6,-1.95) {$u'_{7}$};
\node [right] at (9.1,-1.75) {$u'_{8}$};
\node [left] at (1.5,2.7) {$u''_{1}$};
\node [right] at (3.0625,2.7) {$u''_{2}$};
\node [left] at (6.9375,2.7) {$u''_{3}$};
\node [right] at (8.5,2.7) {$u''_{4}$};
\node [left] at (1.5,-2.7) {$u''_{5}$};
\node [right] at (3.0625,-2.7) {$u''_{6}$};
\node [left] at (6.9375,-2.7) {$u''_{7}$};
\node [right] at (8.5,-2.7) {$u''_{8}$};
\node [above,red] at (0.8,0.5) {$d_{1}$};
\node [above,orange] at (9.2,0.5) {$d_{4}$};
\node [below,mygreen] at (0.8,-0.5) {$d_{3}$};
\node [below,blue] at (9.2,-0.5) {$d_{2}$};
\node [below,mypurple] at (2.5,0.5) {$d_{5}$};
\node [above,mypink] at (7.2,-0.5) {$d_{6}$};
\end{tikzpicture}
\caption{A twist system. Here, $d_{i}\in D_{i}$ for each $i\in[6]$, and dashed arcs indicate an arc which is absent in~$H$.}
\label{fig:twistsystem}
\end{figure}
\COMMENT{The arc set plus knowledge of~$\cP$ is enough to know which arcs to swap in the twist operation: $y$ and~$z$ are already known. Only one arc has colour in~$D_{5}$ and only one has colour in~$D_{6}$. The heads of these arcs are~$u_{2}$ and~$u_{5}$ respectively. The unique still-unlabelled out-neighbour of~$y$ must then be~$u_{1}$. Similarly we identify~$u_{3}$, $u_{4}$, and~$u_{6}$. The remaining unlabelled in-neighbour of~$u_{1}$ must be~$u'_{1}$. The out-neighbour of~$u_{2}$ via a colour in~$D_{2}$ must be~$u'_{2}$. Similarly we identify $u'_{3},\dots,u'_{8}$. Now the remaining arcs with unlabelled endpoints are each uniquely in one of $D_{1},\dots,D_{4}$. The colour identifies which and now the directions identify all remaining vertices. With all vertices labelled, it's clear how the arcs should swap.}Notice that if $H\in\cG_{D}$ and~$T$ is a twist system of~$H$, then $\text{twist}_{T}(H)\in\cG_{D}$, even if $u'_{1},\dots,u'_{8}$ are not distinct.
We now use the twist switching operation to argue that almost all $H\in\widehat{\cQ}_{D}$ have many distinguishable $(y,z,\cP)$-bridges, for fixed $y,z,\cP$, and appropriately sized~$D$.
\begin{lemma}\label{masterswitch2}
Suppose $D\subseteq[n]$ has size $|D|=n/10^{6}$.
Let $y,z\in[n]$ be distinct, and let $\cP=(D_{i})_{i=1}^{6}$ be an equitable partition of~$D$.
Let $\mathbf{H}\in\cG_{D}$ be chosen uniformly at random.
Then
\[
\prob{r(\mathbf{H})\leq \frac{n^{2}}{10^{50}}\biggm|\mathbf{H}\in\widehat{\cQ}_{D}}\leq\exp\left(-\Omega\left(n^{2}\right)\right).
\]
\end{lemma}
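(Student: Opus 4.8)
The plan is a switching argument in the spirit of McKay and Wanless, comparing $|M_{s+1}|$ with $|M_s|$ (I write $M_s=M_s^{(y,z,\cP)}$, $r=r_{(y,z,\cP)}$ throughout). Recall $\widehat{\cQ}_D=\bigcup_{s\ge 0}M_s$, and that by \eqref{eq:qhat} and $|D|=n/10^6$ we have $M_s\subseteq \cQ_D^2$ — so $e_H(A,B)\le 3|D|^3/n$ for all $A,B$ of size $|D|$ — whenever $s\le n^2/10^{48}$. The first step is to count twist systems (Definition~\ref{def:twistsystem}) of a fixed $H\in M_s$ with $s$ in this range. By conditions (ii)--(vi), a twist system is determined by the choice of colours $d_i\in D_i$ $(i\in[6])$, which forces $u_1,\dots,u_6$ as the appropriate monochromatic neighbours of $y$ and $z$ and then forces $u'_1,\dots,u'_8$ as monochromatic neighbours of $u_1,\dots,u_6$, together with the choice of four vertices $u''_1,u''_3,u''_5,u''_7\in[n]$, which force $u''_2,u''_4,u''_6,u''_8$. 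Hence $H$ has at most $\prod_{i=1}^6|D_i|\cdot n^4=\Theta(n^{10})$ twist systems.

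For a matching lower bound I would show that all but a small constant fraction of these $\prod_i|D_i|\cdot n^4$ candidate tuples genuinely satisfy (i)--(vii). The distinctness requirements (i) fail for only an $O(1/n)$ fraction of tuples (distinct colours automatically give distinct monochromatic neighbours, and each coincidence of a forced vertex with a previously chosen one is rare). Among the non-edge conditions (vii), those of the form $u_iu_j\notin E(H)$ with both endpoints forced by the colours — namely $u_2u_1,u_2u_3,u_4u_5,u_6u_5\notin E(H)$ — each exclude at most $e_H(A,B)\le 3|D|^3/n$ colour-tuples, out of at least $|D|^2/36$ relevant pairs, i.e.\ at most a $108|D|/n=O(10^{-4})$ fraction, using $H\in\cQ_D^2$; each of the remaining conditions of (vii) involves a freely chosen $u''$-vertex (or its forced partner, which ranges bijectively over $[n]$) and so excludes at most a $|D|/n=10^{-6}$ fraction. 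As there are only finitely many such conditions, every $H\in M_s$ has at least $\tfrac12\prod_{i=1}^6|D_i|\cdot n^4=\Omega(n^{10})$ twist systems.

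Next I would set up the forward and reverse maps. If $H\in M_s$ and $T$ is a twist system, then $\mathrm{twist}_T(H)\in\cG_D$ differs from $H$ in $12$ arcs, so $e_{\mathrm{twist}_T(H)}(A,B)\le 2|D|^3/n+12(s+1)$; call $T$ \emph{good} if in addition $r(\mathrm{twist}_T(H))=s+1$, so that $\mathrm{twist}_T(H)\in M_{s+1}$. Conversely, given $H'\in M_{s+1}$, any preimage $(H,T)$ under a good switching is recovered by first choosing one of the $r(H')=s+1$ distinguishable $(y,z,\cP)$-bridges of $H'$ to be the candidate canonical bridge of the twist — reading off $u_1,\dots,u_6$ and $d_1,\dots,d_6$ — and then choosing the eight vertices $u''_1,\dots,u''_8\in[n]$; from $H'$ these force $u'_1,\dots,u'_8$ (each $u'_i$ being the monochromatic neighbour of some $u''_j$ along a newly-added arc of $H'$), hence $T$, hence $H$. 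Thus each $H'\in M_{s+1}$ has at most $(s+1)n^8$ good preimages, so $|M_{s+1}|/|M_s|\ge \tfrac12\prod_i|D_i|\cdot n^4/\big((s+1)n^8\big)\ge cn^2/(s+1)$ for an absolute constant $c>10^{-41}$. Setting $t=\lfloor n^2/10^{50}\rfloor$ and $s_0=\lfloor n^2/10^{49}\rfloor$ (so this ratio exceeds $2$ for all $s\le s_0$, whence $|M_s|$ is increasing on $[0,s_0]$),
\[
\prob{r(\mathbf H)\le t\mid \mathbf H\in\widehat{\cQ}_D}=\frac{\sum_{s=0}^t|M_s|}{|\widehat{\cQ}_D|}\le \frac{(t+1)\,|M_t|}{|M_{s_0}|}\le (t+1)\prod_{s=t}^{s_0-1}\frac{|M_s|}{|M_{s+1}|}\le (t+1)\Big(\tfrac{s_0}{cn^2}\Big)^{s_0-t},
\]
and since $s_0/(cn^2)=1/(c\cdot 10^{49})<10^{-7}$ and $s_0-t=\Omega(n^2)$, the right-hand side is $\exp(-\Omega(n^2))$, as required.

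The hard part will be the deferred claim that, for all but a negligible fraction of twist systems, applying the twist creates exactly one new distinguishable $(y,z,\cP)$-bridge — the canonical one — and destroys none. Concretely one must check that no other bridge of $\mathrm{twist}_T(H)$ shares one of the canonical bridge's four distinguishing arcs $u_2u_1,u_2u_3,u_4u_5,u_6u_5$, and that no pre-existing distinguishable bridge loses its distinguishability. Any such ``parasitic'' bridge would have to be assembled from a few newly-added arcs together with a special configuration of old arcs of $H$, so the number of twist systems that admit one is $o(n^{10})$ by the $e_H(A,B)$ bound available from $H\in\widehat{\cQ}_D$ (alternatively, the non-existence of such configurations can be folded into the definition of a twist system, at the cost of a few further conditions like (vii)). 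It is precisely to make this step, and the recovery of a twist from its canonical bridge in the reverse count, tractable that one tracks only \emph{distinguishable} bridges.
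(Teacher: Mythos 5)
Your proposal takes the same high-level route as the paper: a double-counting switching argument comparing $|M_s|$ to $|M_{s+1}|$ via the twist operation, with the same $(s+1)n^8$ reverse-count bound, the same $\Theta(n^{10})$ order of magnitude for the forward count, and essentially the same chaining computation at the end. The setup and the final arithmetic are sound. However, nearly all of the paper's proof — Claims~\ref{d12}--\ref{claim:H'}, the bulk of the work in Section~\ref{switchsection} — is precisely what you defer, and the sketch you offer for filling that gap is not adequate. You suggest the number of twist systems that create a parasitic bridge or destroy a pre-existing distinguishable bridge is ``$o(n^{10})$ by the $e_H(A,B)$ bound,'' but this misidentifies what is actually used, in two separate ways.

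First, the ``destroys none'' half is not a quasi-randomness argument. To guarantee that none of the twelve deleted arcs lies in any distinguishable bridge, the paper imposes the conditions $(R_{1,2})$, $(R_{3,4})$, $(R_5)$, $(R_6)$, $(R_E)$, which it establishes via the $(D_i3)$-type bounds (``at most $k/100$ arcs of colour $d_i$ lie in a distinguishable bridge''). Those in turn are proved by charging against $r(H)$: each problematic colour forces roughly $k/100$ distinct distinguishable bridges, so the smallness of $r(H)=s\le |D|^4/(10^{24}n^2)$ forces the set of problematic colours to be tiny. This accounting through $s$ (and it is exactly why $\widehat{\cQ}_D$ couples the $e_H(A,B)$ bound to $r(H)$ in Definition~\ref{def:bridgestuff}) is an ingredient your sketch never mentions, and $e_H(A,B)\le 3|D|^3/n$ alone does not yield it. Second, the ``creates only the canonical bridge'' half is not a counting argument at all but the delicate case analysis of Claim~\ref{claim:unique}; its hardest case is ruled out not by quasi-randomness applied after the fact, but by the conditions $(A_{5,6}1)$, $(A_{5,6}2)$ imposed during the \emph{selection} of $(d_5,d_6)$, together with $(E2)$ and the role of the colour partition $\cP$. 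Your parenthetical remark that the relevant configurations ``can be folded into the definition of a twist system, at the cost of a few further conditions like (vii)'' is doing all of the work here, and those conditions are neither few nor of the same local character as (vii). Finally, a small mismatch: your ``good'' predicate should also require that the canonical bridge of $\text{twist}_T(H)$ is itself distinguishable, since your reverse recipe begins by guessing a distinguishable bridge of $H'$ to be the canonical one; this is harmless once the deferred claim is proven, but it must be built into the definition for the double count to close.
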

\begin{proof}
Let $k\coloneqq |D|$.
Recall that $M_{1},\dots,M_{nk}$ is a partition of~$\widehat{\cQ}_{D}$ (see Definition~\ref{def:bridgestuff}).
For each $s\in[nk-1]_{0}$ we define an auxiliary bipartite digraph~$B_{s}$ with vertex bipartition~$(M_{s},M_{s+1})$ by putting an arc~$HH'$ whenever~$H\in M_{s}$ contains a twist system~$T$ for which the canonical $(y,z,\cP)$-bridge of the twist is distinguishable in~$\text{twist}_{T}(H)\eqqcolon H'$ and $H'\in M_{s+1}$.\COMMENT{Notice the simpler rule for adjacency this time. It seems we will have to find an upper bound for the number of distinct twist systems~$T$ yielding the same $H'=\text{twist}_{T}(H)$ and divide~$\delta^{+}_{s}$ by that number, but it turns out that distinguishability implies there is no overcounting.}
Define $\delta^{+}_{s}\coloneqq\min_{H\in M_{s}}d^{+}_{B_{s}}(H)$ and $\Delta^{-}_{s+1}\coloneqq\max_{H'\in M_{s+1}}d^{-}_{B_{s}}(H')$, and note that $|M_{s}|/|M_{s+1}|\leq\Delta^{-}_{s+1}/\delta^{+}_{s}$.
We will show that $|M_{s}|/|M_{s+1}|\leq 1/10$, if~$M_{s}$ is non-empty.
To that end, we first obtain an upper bound for~$\Delta^{-}_{s+1}$.
Fix $H'\in M_{s+1}$.
There are~$s+1$ choices of a distinguishable $(y,z,\cP)$-bridge~$B$ in~$H'$ which could have been the canonical $(y,z,\cP)$-bridge of a twist of a graph $H\in M_{s}$ producing~$H'$.
There are then at most~$n^{8}$ choices for the eight additional arcs added by a twist whose canonical bridge is~$B$ since the colours of these arcs are determined by~$B$ and there are~$n$ arcs of each colour in~$H'$.
For any such sequence of choices, there is a unique $H \in M_s$ and twist system $T\subseteq H$ such that $twist_T(H) = H'$, so we determine that $\Delta^{-}_{s+1}\leq (s+1)n^{8}$,\COMMENT{If multiple such antitwists lead to the same $H\in M_{s}$, this only helps us, in that we sought only an upper bound.} for all $s\in[nk-1]_{0}$.

We now find a lower bound for~$\delta^{+}_{s}$, in the case where $s\leq k^{4}/(10^{24}n^{2})$.
Fix $H\in M_{s}$.
We proceed by finding a large collection~$\cT$ of distinct twist systems in~$H$, such that for each $T\in\cT$, the canonical $(y,z,\cP)$-bridge of the twist is distinguishable in $\text{twist}_{T}(H)\eqqcolon H'$ and $H'\in M_{s+1}$.
We do this by ensuring that for any $T\in\cT$, the arc deletions involved in twisting on~$T$ do not decrease~$r(H)$, and that the only $(y,z,\cP)$-bridge created by the arc additions involved in twisting on~$T$ is the canonical $(y,z,\cP)$-bridge~$B$ of the twist (whence~$B$ is evidently distinguishable in~$H'$).
We first use the assumption on~$s$ to argue that~$H$ is not far from being upper-quasirandom (as per Definition~\ref{def:quas}).
Indeed, since $H\in M_{s}$ and $s\leq k^{4}/(10^{24}n^{2})$ we have by Definition~\ref{def:bridgestuff} that for any sets $W_{1},W_{2}\subseteq[n]$ of sizes $|W_{1}|=|W_{2}|=k$, \begin{equation}\label{eq:quasieq}
e_{H}(W_{1},W_{2})\leq\frac{2k^{3}}{n}+12s\leq \frac{3k^{3}}{n}.
\end{equation}
We now use~(\ref{eq:quasieq}) to find a large set~$\Lambda$ of choices for a sequence of colours and arcs $\lambda=(d_{1},d_{2},\dots,d_{6},e_{1},\dots,e_{4})$ with $d_{i}\in D_{i}$, and $e_{i}\in E_{d_{i}}(H)$, such that~$\lambda$ has a number of desirable properties.
We simultaneously use such a sequence~$\lambda$ to choose vertices $u_{1},\dots,u_{6},u'_{1},\dots,u'_{8}$, $u''_{1},\dots,u''_{8}$ as in Definition~\ref{def:twistsystem} and a subgraph $T_{\lambda}\subseteq H[\{y,z\}\cup U_{\text{int}}\cup U^{\text{mid}}\cup U^{\text{ext}}]$, thus constructing a set~$\cT$ of such~$T_{\lambda}$ by ranging over all $\lambda\in\Lambda$.
We will then use the known properties of the sequences $\lambda\in\Lambda$ to verify that each $T_{\lambda}\in\cT$ is a twist system for which the arc~$H\text{twist}_{T_{\lambda}}(H)$ is in~$B_{s}$.
\begin{claim}\label{d12}
There is a set~$D_{1,2}^{\text{good}}$ of pairs $(d_{1},d_{2})\in D_{1}\times D_{2}$ such that $|D_{1,2}^{\text{good}}|\geq k^{2}/100$ and each $(d_{1},d_{2})\in D_{1,2}^{\text{good}}$ satisfies the following,
where $u_{1}\coloneqq\neigh{+}{d_{1}}{y}$, $u'_{1}\coloneqq\neigh{-}{d_{2}}{u_{1}}$, $u_{6}\coloneqq\neigh{-}{d_{2}}{z}$, $u'_{8}\coloneqq\neigh{+}{d_{1}}{u_{6}}$.
\begin{itemize}
    \item [$(D_{1}1)$] There are at most~$10^{8}$ loops with colour~$d_{1}$ in~$H$;
    \item [$(D_{1}2)$] $u_{1}$ has at most~$300k^{2}/n$ in-neighbours in the set~$\neighset{+}{D_{5}}{y}$;
    \item [$(D_{1}3)$] there are at most~$k/100$ arcs~$e$ coloured~$d_{1}$ in~$H$ such that~$e$ is contained in a distinguishable $(y,z,\cP)$-bridge in~$H$; 
    \item [$(D_{2}1)$] there are at most~$10^{8}$ loops with colour~$d_{2}$ in~$H$;
    \item [$(D_{2}2)$] $u_{6}$ has at most~$300k^{2}/n$ out-neighbours in the set~$\neighset{+}{D_{6}}{z}$;
    \item [$(D_{2}3)$] there are at most~$k/100$ arcs~$e$ coloured~$d_{2}$ in~$H$ such that~$e$ is contained in a distinguishable $(y,z,\cP)$-bridge in~$H$; 
    \item [$(V_{1,2})$] the vertices~$y,z,u_{1},u_{6}$ are distinct, and $u'_{1},u'_{8}\notin\{y,z,u_{1},u_{6}\}$;
    \item [$(R_{1,2})$] there is no distinguishable $(y,z,\cP)$-bridge in~$H$ containing the arc~$u'_{1}u_{1}$ or the arc~$u_{6}u'_{8}$.
\end{itemize}
\end{claim}
\claimproof{}
For $i\in[3]$, let~$D_{1,i}$ be the set of colours $d_{1}\in D_{1}$ that fail to satisfy~$(D_{1}i)$.
Since~$H$ contains at most~$n$ loops, $|D_{1,1}|\leq n/10^{8}=k/100$.
Since $e_{H}(N_{D_{5}}^{+}(y),N_{D_{1}}^{+}(y))\leq 3k^{3}/n$ by~(\ref{eq:quasieq}), $|D_{1,2}|\leq k/100$.
Since any $d_{1}$-arc of~$H$ whose tail is not~$y$ is contained in at most one distinguishable $(y,z,\cP)$-bridge~$B$ and each~$B$ contains two such arcs, $r(H)\geq |D_{1,3}|(k/100-1)/2$.
Thus, $|D_{1,3}|\leq k/1000$.
Let $D'_{1}\coloneqq D_{1}\setminus (D_{1,1}\cup D_{1,2}\cup D_{1,3})$,
and notice that $|D'_{1}|\geq k/6-k/50-k/1000\geq 7k/50$.
Similarly there is a set $D'_{2}\subseteq D_{2}$ of size at least~$7k/50$ such that each $d_{2}\in D'_{2}$ satisfies~$(D_{2}1)$--$(D_{2}3)$.
At most two colours $d_{1}\in D'_{1}$ yield $u_{1}\in\{y,z\}$, and for any $d_{1}\in D'_{1}$ there are at most three choices of $d_{2}\in D'_{2}$ such that $u'_{1}\in\{y,z,u_{1}\}$, and at most three choices of $d_{2}\in D'_{2}$ such that $u_{6}\in\{y,z,u_{1}\}$.
For fixed $d_{1}\in D'_{1}$, by~$(D_{1}1)$ there are at most~$10^{8}$ choices of $d_{2}\in D'_{2}$ such that $u'_{8}=u_{6}$, and at most three choices of~$d_{2}$ such that $u'_{8}\in\{y,z,u_{1}\}$.
Finally, if~$u_{1}$ and~$z$ are distinct, then we have $u'_{1}\neq u_{6}$, since otherwise $u_{1}=z$ has two distinct $d_{2}$-out-neighbours.
Since $|D'_{1}|, |D'_{2}|\leq k/6$, we deduce that we can remove at most $2|D'_{2}|+(10^{8}+9)|D'_{1}|\leq10^{8}k/3$ pairs from $D'_{1}\times D'_{2}$ to ensure that all remaining pairs satisfy~$(V_{1,2})$.
To address~$(R_{1,2})$,
notice that for fixed $d_{1}\in D'_{1}$, by~$(D_{1}3)$ there are at most~$k/100$ choices of $d_{2}\in D'_{2}$ such that~$u_{6}u'_{8}$ is contained in a distinguishable $(y,z,\cP)$-bridge~$B$ in~$H$.
Handling~$u'_{1}u_{1}$ analogously we deduce that we may remove at most $2\cdot\frac{k}{100}\cdot\frac{k}{6}$ pairs from~$D'_{1}\times D'_{2}$ to ensure all remaining pairs satisfy~$(R_{1,2})$.
In total the number of pairs in~$D'_{1}\times D'_{2}$ satisfying~$(V_{1,2})$ and~$(R_{1,2})$ is at least $(7k/50)^{2}-10^{8}k/3-k^{2}/300\geq k^{2}/100$ as claimed.
\endclaimproof{}
\begin{claim}\label{d34}
For any $(d_{1},d_{2})\in D_{1,2}^{\text{good}}$ there is a set~$D_{3,4}^{\text{good}}=D_{3,4}^{\text{good}}(d_{1},d_{2})$ of pairs $(d_{3},d_{4})\in D_{3}\times D_{4}$ such that $|D_{3,4}^{\text{good}}|\geq k^{2}/100$ and each $(d_{3},d_{4})\in D_{3,4}^{\text{good}}$ satisfies the following,
where $u_{4}\coloneqq\neigh{-}{d_{3}}{y}$, $u'_{5}\coloneqq\neigh{+}{d_{4}}{u_{4}}$, $u_{3}\coloneqq\neigh{+}{d_{4}}{z}$, $u'_{4}\coloneqq\neigh{-}{d_{3}}{u_{3}}$, and $u_{1},u'_{1},u_{6}$, and~$u'_{8}$ are defined as in Claim~\ref{d12}.
\begin{itemize}
    \item [$(D_{3}1)$] There are at most~$10^{8}$ loops with colour~$d_{3}$ in~$H$;
    \item [$(D_{3}2)$] $u_{4}$ has at most~$300k^{2}/n$ out-neighbours in the set~$\neighset{+}{D_{6}}{z}$;
     \item [$(D_{3}3)$] there are at most~$k/100$ arcs~$e$ coloured~$d_{3}$ in~$H$ such that~$e$ is contained in a distinguishable $(y,z,\cP)$-bridge in~$H$; 
    \item [$(D_{4}1)$] there are at most~$10^{8}$ loops with colour~$d_{4}$ in~$H$;
    \item [$(D_{4}2)$] $u_{3}$ has at most~$300k^{2}/n$ in-neighbours in the set~$\neighset{+}{D_{5}}{y}$;
     \item [$(D_{4}3)$] there are at most~$k/100$ arcs~$e$ coloured~$d_{4}$ in~$H$ such that~$e$ is contained in a distinguishable $(y,z,\cP)$-bridge in~$H$; 
    \item [$(V_{3,4})$] $y,z,u_{1},u_{3},u_{4},u_{6}$ are distinct vertices, and $u'_{1},u'_{4},u'_{5},u'_{8}\notin\{y,z,u_{1},u_{3},u_{4},u_{6}\}$;
    \item [$(R_{3,4})$] there is no distinguishable $(y,z,\cP)$-bridge in~$H$ containing the arc~$u'_{4}u_{3}$ or the arc~$u_{4}u'_{5}$.
\end{itemize}
\end{claim}
The proof is similar to that of Claim~\ref{d12}, so we omit it.\COMMENT{Genuinely the only change is that we need to avoid the vertices already specified when we're sorting out~$(V_{3,4})$, but one does this with similar logic anyway.}
\begin{claim}\label{d56}
For any $(d_{1},d_{2})\in D_{1,2}^{\text{good}}$ and $(d_{3},d_{4})\in D_{3,4}^{\text{good}}(d_{1},d_{2})$, there is a set~$D_{5,6}^{\text{good}}$ (depending on $(d_{1},\dots,d_{4})$) of pairs $(d_{5},d_{6})\in D_{5}\times D_{6}$ such that $|D_{5,6}^{\text{good}}|\geq k^{2}/100$ and each $(d_{5},d_{6})\in D_{5,6}^{\text{good}}$ satisfies the following,
where $u_{2}\coloneqq\neigh{+}{d_{5}}{y}$, $u'_{2}\coloneqq\neigh{+}{d_{2}}{u_{2}}$, $u'_{3}\coloneqq\neigh{+}{d_{3}}{u_{2}}$, $u_{5}\coloneqq\neigh{+}{d_{6}}{z}$, $u'_{6}\coloneqq\neigh{-}{d_{4}}{u_{5}}$, $u'_{7}\coloneqq\neigh{-}{d_{1}}{u_{5}}$, and $u_{1},u_{3},u_{4},u_{6},u'_{1},u'_{4},u'_{5},u'_{8}$ are defined as in Claims~\ref{d12} and~\ref{d34}.
\begin{itemize}
    \item [$(D_{5}1)$] $u_{2}u_{1}, u_{2}u_{3}\notin E(H)$;
    \item [$(D_{6}1)$] $u_{4}u_{5}, u_{6}u_{5}\notin E(H)$;
    \item [$(R_{5})$] there is no distinguishable $(y,z,\cP)$-bridge in~$H$ containing the arc~$u_{2}u'_{2}$ or the arc~$u_{2}u'_{3}$;
    \item [$(R_{6})$] there is no distinguishable $(y,z,\cP)$-bridge in~$H$ containing the arc~$u'_{6}u_{5}$ or the arc~$u'_{7}u_{5}$;
    \item [$(V_{5,6})$] $y,z,u_{1},u_{2},\dots,u_{6}$ are distinct, and $u'_{1},u'_{2},\dots,u'_{8}\notin\{y,z,u_{1},u_{2},\dots,u_{6}\}$;
    \item [$(A_{5,6}1)$] for each $(d'_{1},d'_{2})\in F_{1,2}(d_{5})$, where~$F_{1,2}(d_{5})$ is the set of pairs $(d'_{1},d'_{2})\in D_{1}\times D_{2}$ such that $\neigh{+}{d'_{1}}{y}=\neigh{+}{d'_{2}}{u_{2}}$, we have $\neigh{-}{d'_{1}}{u_{5}}\neq\neigh{-}{d'_{2}}{z}$;
    \item [$(A_{5,6}2)$] for each $(d'_{3},d'_{4})\in F_{3,4}(d_{5})$, where~$F_{3,4}(d_{5})$ is the set of pairs $(d'_{3},d'_{4})\in D_{3}\times D_{4}$ such that $\neigh{+}{d'_{4}}{z}=\neigh{+}{d'_{3}}{u_{2}}$, we have $\neigh{-}{d'_{4}}{u_{5}}\neq\neigh{-}{d'_{3}}{y}$.
\end{itemize}
\end{claim}
\claimproof{}
Let~$\widetilde{D}_{5}$ be the set of colours $d_{5}\in D_{5}$ which fail to satisfy~$(D_{5}1)$ and~$(R_{5})$, let~$\widetilde{D}_{6}$ be the set of colours $d_{6}\in D_{6}$ which fail to satisfy~$(D_{6}1)$ and~$(R_{6})$, and define $D'_{5}\coloneqq D_{5}\setminus \widetilde{D}_{5}$ and $D'_{6}\coloneqq D_{6}\setminus\widetilde{D}_{6}$.
By~$(D_{1}2)$ and~$(D_{4}2)$, there are at most~$600k^{2}/n$ colours $d_{5}\in D_{5}$ which fail to satisfy~$(D_{5}1)$.
By~$(D_{2}3)$ and~$(D_{3}3)$, at most~$k/50$ choices of $d_{5}\in D_{5}$ give~$u_{2}$ to be the tail of a $d_{2}$-arc or $d_{3}$-arc contained in a distinguishable $(y,z,\cP)$-bridge in~$H$, and all other choices of $d_{5}\in D_{5}$ satisfy~$(R_{5})$.
Using~$(D_{2}2)$,~$(D_{3}2)$,~$(D_{1}3)$, and~$(D_{4}3)$ similarly, we deduce that $|D'_{5}|,|D'_{6}|\geq 7k/50$.
By~$(D_{2}1)$ and~$(D_{3}1)$, for any $d_{6}\in D'_{6}$ there are at most $2\cdot 10^{8}+10$ choices of $d_{5}\in D'_{5}$ such that $u_{5}\in\{y,z,u_{1},u_{3},u_{4},u_{6},u'_{1},u'_{4},u'_{5},u'_{8}\}$ or~$u_{5}$ is incident to a loop of colour~$d_{2}$ or~$d_{3}$.
Further, for any $d_{6}\in D'_{6}$, there are at most~$12$ choices of~$d_{5}$ such that $u_{2}\in\{\neigh{-}{d}{w}\colon d\in\{d_{2},d_{3}\},w\in\{y,z,u_{1},u_{3},u_{4},u_{6}\}\}$.
Thus there are at most $(2\cdot 10^{8}+22)|D_{6}|$ choices of pair $(d_{5},d_{6})\in D'_{5}\times D'_{6}$ such that the choice of~$d_{5}$ causes~$(V_{5,6})$ to fail.
Using~$(D_{1}1)$ and~$(D_{4}1)$ to address the choice of~$d_{6}$ similarly, we conclude that we can remove a set of at most~$(5\cdot 10^{8})k/6$ pairs $(d_{5},d_{6})\in D'_{5}\times D'_{6}$ such that~$(V_{5,6})$ holds for all remaining pairs\COMMENT{If we need to save space, we could put ($(V_{5,6}$) in a comment and say it's the same as claim 1}.
For~$(A_{5,6}1)$, notice that~(\ref{eq:quasieq}) implies $e_{H}(N_{D_{5}}^{+}(y), N_{D_{1}}^{+}(y))\leq3k^{3}/n$, so that there are at most~$k/100$ colours $d_{5}\in D'_{5}$ for which~$u_{2}$ has at least~$300k^{2}/n$ out-neighbours in the set~$N_{D_{1}}^{+}(y)$.
Deleting all pairs~$(d_{5},d_{6})$ using such a~$d_{5}$, we have in particular that all remaining pairs satisfy $|F_{1,2}(d_{5})|\leq300k^{2}/n$.
For a remaining pair~$(d_{5},d_{6})$ and each $(d'_{1},d'_{2})\in F_{1,2}(d_{5})$ we define the vertex $v_{d'_{1},d'_{2}}\coloneqq\neigh{+}{d'_{1}}{\neigh{-}{d'_{2}}{z}}$.
Now further deleting all (at most~$\frac{k}{6}\cdot300\frac{k^{2}}{n}$) pairs~$(d_{5},d_{6})$ such that $u_{5}\in\{v_{d'_{1},d'_{2}}\colon (d'_{1},d'_{2})\in F_{1,2}(d_{5})\}$, all remaining pairs satisfy~$(A_{5,6}1)$.
We address~$(A_{5,6}2)$ similarly.
In total, the number of pairs in~$D'_{5}\times D'_{6}$ satisfying $(V_{5,6})$,~$(A_{5,6}1)$, and~$(A_{5,6}2)$ is at least $(7k/50)^{2}-5\cdot10^{8}k/6-2(k^{2}/600+50k^{3}/n)\geq k^{2}/100$, finishing the proof of the claim.
\endclaimproof{}
\begin{claim}\label{claime}
For any $(d_{1},d_{2})\in D_{1,2}^{\text{good}}$, $(d_{3},d_{4})\in D_{3,4}^{\text{good}}$, $(d_{5},d_{6})\in D_{5,6}^{\text{good}}$, there is a set $E^{\text{good}}=E^{\text{good}}(d_{1},d_{2},\dots,d_{6})\subseteq\prod_{i\in[4]}E_{d_{i}}(H)$ such that $|E^{\text{good}}|\geq9n^{4}/10$ and each $(e_{1},\dots,e_{4})\in E^{\text{good}}$ (with $e_{i}\in E_{d_{i}}(H)$ for each $i\in[4]$) satisfies the following,
where $u''_{1}\coloneqq\vhead{e_{2}}$, $u''_{2}\coloneqq\vtail{e_{2}}$, $u''_{3}\coloneqq\vtail{e_{3}}$, $u''_{4}\coloneqq\vhead{e_{3}}$, $u''_{5}\coloneqq\vtail{e_{4}}$, $u''_{6}\coloneqq\vhead{e_{4}}$, $u''_{7}\coloneqq\vhead{e_{1}}$, $u''_{8}\coloneqq\vtail{e_{1}}$, and $u_{1},\dots,u_{6},u'_{1},\dots,u'_{8}$ are defined as in Claims~\ref{d12}--\ref{d56}.
\begin{itemize}
    \item [$(V_{E})$] $u''_{1},u''_{2},\dots,u''_{8}$ are distinct vertices and $u''_{1},\dots,u''_{8}\notin\{y,z,u_{1},\dots,u_{6},u'_{1},\dots,u'_{8}\}$;
    \item [$(E1)$] $u'_{1}u''_{1},u''_{2}u'_{2},u''_{3}u'_{3},u'_{4}u''_{4},u''_{5}u'_{5},u'_{6}u''_{6},u'_{7}u''_{7},u''_{8}u'_{8}\notin E(H)$;
    \item [$(E2)$] $\{u''_{1},u''_{2},\dots,u''_{8}\}\cap(N(y)\cup N(z))=\emptyset$;
    \item [$(R_{E})$] there is no distinguishable $(y,z,\cP)$-bridge in~$H$ containing any of the arcs $e_{1},\dots,e_{4}$.
\end{itemize}
\end{claim}
\claimproof{}
At most~$32$ $d_{2}$-arcs of~$H$ have head or tail in~$\{y,z,u_{1},\dots,u_{6},u'_{1},\dots,u'_{8}\}$, and by~$(D_{2}1)$, at most~$10^{8}$ arcs in~$E_{d_{2}}(H)$ are loops.
Choosing any other $d_{2}$-arc to be~$e_{2}$, and proceeding similarly for $e_{3},e_{4},e_{1}$ (also avoiding the vertices of previously chosen such arcs), we deduce that we can delete at most\COMMENT{$4(10^8 +32)n^3 + 4n^3 + 8n^3 + 12n^3 \leq$}~$4\cdot10^{9}n^{3}$ tuples from~$\prod_{i=1}^{4}E_{d_{i}}(H)$ so that the remaining tuples satisfy~$(V_{E})$. 
Notice that at most~$2k$ choices of $e_{2}\in E_{d_{2}}(H)$ have head in~$N^{+}(u'_{1})$ or tail in~$N^{-}(u'_{2})$, and any other~$e_{2}$ satisfies $u'_{1}u''_{1},u''_{2}u'_{2}\notin E(H)$.
Dealing with~$e_{3},e_{4},e_{1}$ similarly addresses~$(E1)$.
Similarly, for~$(E2)$ it suffices to notice that at most~$8k$ choices of~$e_{2}$ (for example) have either head or tail in~$N(y)\cup N(z)$.
Finally, by~$(D_{i}3)$ for each $i\in[4]$, at most~$k/100$ choices of each~$e_{i}$ fail to satisfy~$(R_{E})$.
In total, at least $n^4 - 4n^{3}(10^9 - 2k - 8k - k/100) \geq 9n^4 / 10$ tuples in $\prod_{i=1}^4 E_{d_i}(H)$ satisfy~$(V_{E})$, $(E1)$, $(E2)$, and $(R_{E})$, as desired.
\endclaimproof{}
Let~$\Lambda$ be the set of tuples $\lambda=(d_{1},\dots,d_{6},e_{1},\dots,e_{4})$ satisfying all properties in Claims~\ref{d12}--\ref{claime}.
For each $\lambda\in\Lambda$ we define a subgraph $T_{\lambda}\subseteq H$ with the vertices $V(T_{\lambda})=\{y, z, u_{1}, \dots, u_{6}\}\cup\{u'_{1}, \dots, u'_{8}, u''_{1}, \dots, u''_{8}\}$ as defined in Claims~\ref{d12}--\ref{claime} by the choice of~$\lambda$, and whose arcs are as in Definition~\ref{def:twistsystem}(i).
(These arcs each exist in~$H$ by the way the vertices of~$T_{\lambda}$ were defined.)
Then since $d_{i}\in D_{i}$ for each $i\in[6]$ and since~$(D_{5}1)$,~$(D_{6}1)$, and~$(E1)$ hold for each $\lambda\in\Lambda$, we have that each condition of Definition~\ref{def:twistsystem} is satisfied, so that~$T_{\lambda}$ is a twist system of~$H$ for each $\lambda\in\Lambda$.
Further, clearly the~$T_{\lambda}$ are distinct.
Define $\cT\coloneqq\{T_{\lambda}\colon\lambda\in\Lambda\}$, and
notice that $|\cT|\geq\left(k^{2}/100\right)^{3}\cdot9n^{4}/10=9k^{6}n^{4}/10^{7}$.
\begin{claim}\label{claim:unique}
For any $T_{\lambda}\in\cT$, the only $(y,z,\cP)$-bridge in~$\text{twist}_{T_{\lambda}}(H)$ that is not in~$H$ is the canonical $(y,z,\cP)$-bridge of the twist.
\end{claim}
\claimproof{}
Fix $T_{\lambda}\in\cT$ (fixing the notation of all the vertices and arcs as above), and let~$B$ be the canonical $(y,z,\cP)$-bridge of the twist (which has vertices $V(B)=\{y,z,u_{1},\dots,u_{6}\}$ and colours $d_{1},\dots,d_{6}$).
Suppose that~$B'$ is a $(y,z,\cP)$-bridge in~$\text{twist}_{T_{\lambda}}(H)$ that is not in~$H$, and label the vertices of~$B'$ as $V(B')=\{y,z,v_{1},\dots,v_{6}\}$ (where the role of~$v_{i}$ in~$B'$ corresponds to that of~$u_{i}$ in~$B$), and label the colours of~$B'$ as $d'_{i}\in D_{i}$, for $i\in[6]$.
By~$(V_{1,2})$,~$(V_{3,4})$, and~$(V_{5,6})$, all arcs we add when producing~$\text{twist}_{T_{\lambda}}(H)$ from~$H$ do not have~$y$ or~$z$ as an endpoint, and thus one (or more) of the arcs $v_{2}v_{1}$, $v_{2}v_{3}$, $v_{4}v_{5}$, $v_{6}v_{5}$ is added by the twist operation.
Further, due to the colour partition~$\cP$, $v_{2}v_{1}$ must either be in~$H$, or be one of the added arcs $u_{2}u_{1}$, $u'_{1}u''_{1}$, $u''_{2}u'_{2}$ with colour $d'_{2}\in D_{2}$.
But since we do not add any arcs incident to~$y$, by~$(E2)$,~$u''_{1}$ and~$u''_{2}$ are not in the neighbourhood of~$y$ in~$\text{twist}_{T_{\lambda}}(H)$, whence~$u''_{1}$ cannot be~$v_{1}$, and~$u''_{2}$ cannot be~$v_{2}$.
Thus~$v_{2}v_{1}$ must either be in~$H$, or be~$u_{2}u_{1}$.
Similarly~$v_{2}v_{3}$, $v_{4}v_{5}$, $v_{6}v_{5}$ must be $u_{2}u_{3}$, $u_{4}u_{5}$, $u_{6}u_{5}$ respectively, or be in~$H$, in some combination.
We now split the analysis into cases, depending on how many arcs in~$F\coloneqq\{v_{2}v_{1}, v_{2}v_{3}, v_{4}v_{5}, v_{6}v_{5}\}$ are in~$H$.
In each case we show either that that case does not occur or that $B'=B$, which will complete the proof of the claim.
Since $B'\nsubseteq H$, at most three arcs in~$F$ are in~$H$.
\newline\noindent\underline{Case 1}: Precisely three arcs in~$F$ are in~$H$.
\newline\noindent 
Let~$e$ be the arc in~$F$ that is not in~$H$.
Suppose $e=v_{2}v_{1}$, which implies that $e=u_{2}u_{1}$, and $d'_{2}=d_{2}$.
Since $\neigh{+}{d'_{1}}{y}=v_{1}=u_{1}=\neigh{+}{d_{1}}{y}$, we have $d'_{1}=d_{1}$.
Then $v_{6}=\neigh{-}{d'_{2}}{z}=\neigh{-}{d_{2}}{z}=u_{6}$, and~$v_{5}$ is the $d'_{1}$-out-neighbour of~$v_{6}$ in~$\text{twist}_{T_{\lambda}}(H)$, which is the $d_{1}$-out-neighbour of~$u_{6}$ in~$\text{twist}_{T_{\lambda}}(H)$, namely~$u_{5}$.
This is a contradiction, since~$v_{6}v_{5}$ is in~$H$, but~$u_{6}u_{5}$ is not.
One similarly obtains a contradiction if~$e$ is $v_{2}v_{3}$, $v_{4}v_{5}$, or $v_{6}v_{5}$, so we deduce that this case does not occur.
\newline\noindent\underline{Case 2}: Precisely two arcs in~$F$ are in~$H$.
\newline\noindent Suppose that $v_{2}v_{1}=u_{2}u_{1}$ and $v_{6}v_{5}=u_{6}u_{5}$.
Then $d'_{1}=d_{1}$, $d'_{2}=d_{2}$, and $v_{2}v_{3}$, $v_{4}v_{5}$ are in~$H$.
Hence $v_{3}\neq u_{3}$, and the arc $u_{2}v_{3}$ is in~$H$ with colour~$d'_{3}$.
Moreover the arc~$zv_{3}$ is in~$H$ with colour~$d'_{4}$.
In particular, $(d'_{3},d'_{4})\in F_{3,4}(d_{5})$.
Similarly $v_{4}\neq u_{4}$, we have~$v_{4}y\in E(H)$ has colour~$d'_{3}$, and $v_{4}u_{5}\in E(H)$ has colour~$d'_{4}$.
That is, $\neigh{-}{d'_{4}}{u_{5}}=v_{4}=\neigh{-}{d'_{3}}{y}$, which contradicts~$(A_{5,6}2)$ of Claim~\ref{d56}.
Similarly one can use~$(A_{5,6}1)$ to show that assuming $v_{2}v_{3}=u_{2}u_{3}$ and $v_{4}v_{5}=u_{4}u_{5}$ yields a contradiction.
Suppose instead that $v_{2}v_{1}=u_{2}u_{1}$ and $v_{2}v_{3}=u_{2}u_{3}$.
Then $d'_{2}=d_{2}$, and $v_{1}=u_{1}$ so that $d'_{1}=d_{1}$.
Further, $d'_{3}=d_{3}$, and $v_{3}=u_{3}$ so that $d'_{4}=d_{4}$.
But this now also determines that $v_{4}v_{5}=u_{4}u_{5}$ and $v_{6}v_{5}=u_{6}u_{5}$, a contradiction since then no arcs in~$F$ are in~$H$.
All remaining possibilities yield a contradiction similarly, whence this case does not occur.
\newline\noindent\underline{Case 3}: Precisely one arc in~$F$ is in~$H$.
\newline\noindent In particular either we have $v_{2}v_{1}=u_{2}u_{1}$ and $v_{2}v_{3}=u_{2}u_{3}$ or we have $v_{4}v_{5}=u_{4}u_{5}$ and $v_{6}v_{5}=u_{6}u_{5}$.
But as in Case 2, either way yields that no arcs in~$F$ are in~$H$.
We deduce that this case does not occur.
\newline\noindent\underline{Case 4}: No arcs in~$F$ are in~$H$.
\newline\noindent It is easy to see in this case that $v_{i}=u_{i}$ for all $i\in[6]$ whence $B'=B$.
\endclaimproof{}
For any $T_{\lambda}\in\cT$, we have by Claim~\ref{claim:unique} that the canonical $(y,z,\cP)$-bridge of the twist is distinguishable in~$\text{twist}_{T_{\lambda}}(H)$, since the four arcs $u_{2}u_{1}$, $u_{2}u_{3}$, $u_{4}u_{5}$, $u_{6}u_{5}$ are each added by the twist, and do not create any other $(y,z,\cP)$-bridge.
Further, since the twist operation only adds~$12$ arcs, we obtain from~(\ref{eq:quasieq}) that $e_{\text{twist}_{T_{\lambda}}(H)}(W_{1},W_{2})\leq 2k^{3}/n +12(s+1)$ for all $W_{1},W_{2}\subseteq[n]$ of sizes $|W_{1}|=|W_{2}|=k$.
Thus by~$(R_{1,2})$, $(R_{3,4})$,~$(R_{5})$,~$(R_{6})$, $(R_{E})$, and Claim~\ref{claim:unique}, we have that $\text{twist}_{T_{\lambda}}(H)\in M_{s+1}$.
We now give a final claim which ensures that $|\{\text{twist}_{T_{\lambda}}(H)\colon T_{\lambda}\in\cT\}|=|\cT|$.
\begin{claim}\label{claim:H'}
Fix $H'\in M_{s+1}$, let $\lambda, \lambda'\in\Lambda$, and suppose that $\text{twist}_{T_{\lambda}}(H)=\text{twist}_{T_{\lambda'}}(H)=H'$.
Then $\lambda=\lambda'$.
\end{claim}
\claimproof{}
Let $\lambda=(d_{1},\dots,d_{6},e_{1},\dots,e_{4})$, $\lambda'=(d'_{1},\dots,d'_{6},e'_{1},\dots,e'_{4})$, with corresponding vertices $V(T_{\lambda})=\{y,z,u_{1},\dots,u_{6}\}\cup\{u'_{1},\dots,u'_{8},u''_{1},\dots,u''_{8}\}$, $V(T_{\lambda'})=\{y,z,v_{1},\dots,v_{6}\}\cup\{v'_{1},\dots,v'_{8},v''_{1},\dots,v''_{8}\}$.
Let~$B$ and~$B'$ be the canonical $(y,z,\cP)$-bridges of the twists corresponding to~$\lambda$ and~$\lambda'$, respectively.
By Claim~\ref{claim:unique},~$B$ and~$B'$ are each the unique $(y,z,\cP)$-bridge that is in~$H'$ but not in~$H$, and thus $B=B'$.
In particular, $d_{i}=d'_{i}$ and $u_{i}=v_{i}$ for all $i\in [6]$.
By considering the partition~$\cP$ of~$D$, the four arcs in~$E(H)\setminus E(H')$ with no endvertex in~$\{u_{1},\dots,u_{6}\}$ must be $e_{1}=e'_{1}$, $e_{2}=e'_{2}$, $e_{3}=e'_{3}$, and $e_{4}=e'_{4}$.
We conclude that $\lambda=\lambda'$, as required.
\endclaimproof{}
We determine that if $s\leq k^{4}/(10^{24}n^{2})$ and~$M_{s}$ is non-empty, then~$M_{s+1}$ is non-empty, and $\delta^{+}_{s}\geq|\cT|\geq 9k^{6}n^{4}/10^{7}$, whence $|M_{s}|/|M_{s+1}|\leq \Delta^{-}_{s+1}/\delta^{+}_{s}\leq 1/10$.
Recalling that $\mathbf{H}\in\cG_{D}$ is uniformly random, it follows that if $s\leq k^{4}/(10^{25}n^{2})$ then
\begin{eqnarray*}
\prob{r(\mathbf{H})=s\,\big|\, \mathbf{H}\in\widehat{\cQ}_{D}} & \leq & \frac{|M_{s}|}{|M_{k^{4}/(10^{24}n^{2})}|}=\prod_{t=s}^{k^{4}/(10^{24}n^{2})-1}\frac{|M_{t}|}{|M_{t+1}|}\leq\left(\frac{1}{10}\right)^{k^{4}/(10^{24}n^{2})-s} \\ & \leq & \exp\left(-\frac{9k^{4}\log 10}{10^{25}n^{2}}\right).
\end{eqnarray*}
Moreover, we note that if~$M_{s}$ is empty, then clearly $\prob{r(\mathbf{H})=s\mid \mathbf{H}\in\widehat{\cQ}_{D}}=0$.
Since $k=n/10^{6}$, we obtain that $\prob{r(\mathbf{H})\leq k^{4}/(10^{25}n^{2})\mid\mathbf{H}\in\widehat{\cQ}_{D}}\leq(k^{4}/(10^{25}n^{2})+1)\exp(-\Omega(n^{2}))=\exp(-\Omega(n^{2}))$.
Since $n^{2}/10^{50}\leq k^{4}/(10^{25}n^{2})$, the result follows.
\end{proof}
%
%
%
%
%
We are now ready to argue that almost all $G\in\Phi(\dirK)$ contain large well-spread collections of bridging gadgets, which will complete our study of the properties we need to be satisfied by uniformly random $\mathbf{G}\in\Phi(\dirK)$.
\begin{lemma}\label{main-switching-lemma}
Let $\mathbf{G}\in\Phi(\dirK)$ be chosen uniformly at random, and let~$\cE$ be the event that for all distinct $y,z\in[n]$,~$\mathbf{G}$ contains a well-spread collection of at least~$n^{2}/10^{50}$ distinct $(y,z)$-bridging gadgets.
Then $\prob{\cE}\geq 1-\exp(-\Omega(n^{2}))$.
\end{lemma}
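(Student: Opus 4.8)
The plan is to fix once and for all a set $D\subseteq[n]$ with $|D|=n/10^{6}$ together with an equitable partition $\cP=(D_{i})_{i=1}^{6}$ of $D$, and to prove for each fixed ordered pair of distinct vertices $y,z\in[n]$ that with probability $1-\exp(-\Omega(n^{2}))$ the restriction $\mathbf{G}|_{D}$ contains at least $n^{2}/10^{50}$ distinguishable $(y,z,\cP)$-bridges, then to take a union bound over the at most $n^{2}$ such pairs. From each distinguishable $(y,z,\cP)$-bridge I extract a $(y,z)$-bridging gadget by deleting the two extra arcs $yw_{2}$, $zw_{5}$ of Definition~\ref{def:bridgestuff}; this map is injective since in a $(y,z,\cP)$-bridge the arc $yw_{2}$ (resp.\ $zw_{5}$) is the unique out-arc of $y$ (resp.\ of $z$) whose colour lies in $D_{5}$ (resp.\ in $D_{6}$), and $\dirK$ has at most one arc in each direction between a given pair of vertices, so one obtains at least $n^{2}/10^{50}$ distinct $(y,z)$-bridging gadgets. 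These lie in $\mathbf{G}|_{D}$ and hence in $\mathbf{G}$, which contains $\mathbf{G}|_{D}$ with its induced colouring.

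The one genuinely new ingredient is a claim (which I would record as Claim~\ref{claim:spreadgadgets}) asserting that for \emph{any} $H\in\cG_{D}$ and any collection of distinguishable $(y,z,\cP)$-bridges in $H$, the corresponding collection of $(y,z)$-bridging gadgets is well-spread in the sense of Definition~\ref{def:wellspread}. The proof is a charging argument resting on the fact (visible from Definition~\ref{def:bridgegadg}) that the four arcs $w_{2}w_{1},w_{2}w_{3},w_{4}w_{5},w_{6}w_{5}$ of a $(y,z,\cP)$-bridge $B$ carry colours $d_{2}\in D_{2}$, $d_{3}\in D_{3}$, $d_{4}\in D_{4}$, $d_{1}\in D_{1}$, are incident between them to all six internal vertices $w_{1},\dots,w_{6}$, and — when $B$ is distinguishable — lie in no other $(y,z,\cP)$-bridge of $H$. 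Hence, for a fixed vertex $u\notin\{y,z\}$, each distinguishable bridge containing $u$ can be charged injectively to an arc of $H$ incident to $u$, of which there are at most $d^{+}_{H}(u)+d^{-}_{H}(u)=2|D|\le n$. For a fixed colour $d\in D_{1}\cup D_{2}\cup D_{3}\cup D_{4}$, in every distinguishable bridge $B$ containing $d$ the part of $\cP$ containing $d$ forces $d$ to be one of $d_{1},\dots,d_{4}$, and $B$ can be charged injectively to the one of $w_{2}w_{1},w_{2}w_{3},w_{4}w_{5},w_{6}w_{5}$ of colour $d$, of which there are at most $n$ in $H$. Finally, for $d\in D_{5}$ (the case $d\in D_{6}$ is symmetric) every distinguishable bridge containing $d$ has $w_{2}=\neigh{+}{d}{y}$, a fixed vertex, so it can be charged injectively to one of the at most $d^{+}_{H}(w_{2})=|D|\le n$ out-arcs of $w_{2}$. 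This proves the claim (colours outside $D$ lie in no gadget, so the condition is vacuous there).

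For the probabilistic half, let $\mathbf{H}\in\cG_{D}$ be uniformly random. Averaging Lemma~\ref{quasi-lemma} over the colour class $\mathbf{F}_{c}$ of some fixed $c\in D$ gives $\prob{\mathbf{H}\in\cQ_{D}^{1}}\ge 1-\exp(-\Omega(n^{2}))$, and since $\cQ_{D}^{1}\subseteq\widehat{\cQ}_{D}$ by~(\ref{eq:qhat}), also $\prob{\mathbf{H}\notin\widehat{\cQ}_{D}}\le\exp(-\Omega(n^{2}))$. Combining with Lemma~\ref{masterswitch2} we get $\prob{r(\mathbf{H})\le n^{2}/10^{50}}\le\exp(-\Omega(n^{2}))$. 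To transfer this to $\mathbf{G}|_{D}$ for uniformly random $\mathbf{G}\in\Phi(\dirK)$, note that $\prob{\mathbf{G}|_{D}=H}=\comp(H)/|\Phi(\dirK)|$ with $|\Phi(\dirK)|=\sum_{H'\in\cG_{D}}\comp(H')$, so that for any $\cB\subseteq\cG_{D}$,
\[
\prob{\mathbf{G}|_{D}\in\cB}=\frac{\sum_{H\in\cB}\comp(H)}{\sum_{H'\in\cG_{D}}\comp(H')}\le\frac{|\cB|}{|\cG_{D}|}\cdot\max_{H,H'\in\cG_{D}}\frac{\comp(H)}{\comp(H')}\le\prob{\mathbf{H}\in\cB}\cdot\exp(O(n\log^{2}n))
\]
by Proposition~\ref{prop:wf}. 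Taking $\cB=\{H\in\cG_{D}:r(H)\le n^{2}/10^{50}\}$ yields $\prob{r(\mathbf{G}|_{D})\le n^{2}/10^{50}}\le\exp(-\Omega(n^{2})+O(n\log^{2}n))=\exp(-\Omega(n^{2}))$, and a union bound over the at most $n^{2}$ pairs $(y,z)$ finishes the proof.

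The main obstacle, such as it is, is the well-spreadness claim: once distinguishability has been built into the definitions — which was done precisely so that each of a bridge's four ``distinguishing'' arcs sits at a distinct internal vertex and carries a colour from a distinct part of $\cP$ — a single arc-charging scheme dispatches both the vertex and the colour conditions, and the only point needing care is the $D_{5}$/$D_{6}$ case, where one must observe that the relevant endpoint ($w_{2}$ or $w_{5}$) is pinned down by $y$ or $z$ and the colour. Everything else is bookkeeping: the removal of the conditioning on $\widehat{\cQ}_{D}$ via Lemma~\ref{quasi-lemma}, and the change of measure from uniform $\mathbf{H}\in\cG_{D}$ to $\mathbf{G}|_{D}$ via Proposition~\ref{prop:wf}, where the $\exp(O(n\log^{2}n))$ loss is harmless against the $\exp(-\Omega(n^{2}))$ bound coming from the switching argument of Lemma~\ref{masterswitch2}.
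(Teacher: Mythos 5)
Your proposal is correct and follows essentially the same route as the paper's proof: a charging argument based on distinguishability to establish that the collection of bridging gadgets extracted from distinguishable $(y,z,\cP)$-bridges is well-spread (the paper's Claim~\ref{claim:spreadgadgets}), then unconditioning from $\widehat{\cQ}_D$ via Lemma~\ref{quasi-lemma}, applying Lemma~\ref{masterswitch2}, transferring from $\cG_D$ to $\Phi(\dirK)$ via Proposition~\ref{prop:wf}, and a union bound over the $\le n^2$ pairs $(y,z)$. The only cosmetic differences are that you perform the union bound after the change of measure rather than before, and you spell out injectivity of the bridge-to-gadget map and the $D_5/D_6$ colour case a little more explicitly than the paper does.
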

\begin{proof}
For $D\subseteq[n]$, let~$\cE|_{D}$ denote the event (in~$\Phi(\dirK)$) that~$\mathbf{G}|_{D}$ contains a well-spread collection of~$n^{2}/10^{50}$ distinct $(y,z)$-bridging gadgets, for each distinct $y,z,\in[n]$.
Let $\mathbf{H}\in\cG_{D}$ be chosen uniformly at random, let~$\pr_{D}$ denote the measure for this probability space, and let~$\cE_{D}^{(y,z)}$ denote the event (in~$\cG_{D}$) that~$\mathbf{H}$ contains a well-spread collection of~$n^{2}/10^{50}$ distinct $(y,z)$-bridging gadgets, and define $\cE_{D}\coloneqq\bigcap_{y,z\in[n]\,\text{distinct}}\cE_{D}^{(y,z)}$. 
\begin{claim}\label{claim:spreadgadgets}
Suppose $D\subseteq [n]$ has size $|D|\leq 3n/4$,
let~$\cP=(D_{i})_{i\in[6]}$ be an equitable partition of~$D$ into six parts, and fix $y,z,\in[n]$ distinct.
Then $\probd{r_{(y,z,\cP)}(\mathbf{H})\geq n^{2}/10^{50}}\leq\probd{\cE_{D}^{(y,z)}}$.
\end{claim}
\claimproof{}
Suppose that $H\in\cG_{D}$ and that $r(H)\geq n^{2}/10^{50}$.
By definition of~$r(H)$ (see Definition~\ref{def:bridgestuff}), there is a collection~$\cB$ of~$n^{2}/10^{50}$ distinct $(y,z,\cP)$-bridges such that for each $i\in[4]$, any $d_{i}\in D_{i}$, and any arc $e\in E_{d_{i}}(H)$ for which~$e$ does not have~$y$ nor~$z$ as an endvertex, we have that~$e$ is contained in at most one $B\in\cB$.
Note that for each $u\in[n]\setminus\{y,z\}$, we have for any $B\in\cB$ which contains~$u$, that~$B$ must contain an arc~$e$ incident to~$u$ with colour in~$D_{i}$ for some $i\in[4]$ such that~$e$ is not incident to~$y$ nor~$z$.
Therefore~$u$ is contained in at most $4\cdot2\cdot |D|/6\leq n$ distinct $B\in\cB$.\COMMENT{$4$ for $D_{1}, D_{2}, D_{3}, D_{4}$, $2$ for the fact~$u$ has an incident in-arc and out-arc of each colour,~$k/6$ for the size of~$D_{i}$.}
For any colour $d\in D_{1}$, any $B\in\cB$ which uses the colour~$d$ must be such that $\neigh{+}{d}{y}\notin\{y,z\}$ and must contain the vertex~$\neigh{+}{d}{y}$, and thus the colour~$d$ is used by at most~$n$ distinct $B\in\cB$ (and similarly for $d\in D_{i}$ for all $i\in[6]$).
Now, forming a collection~$\cB'$ of $(y,z)$-bridging gadgets in~$H$ by deleting the arcs with colours in $D_{5}\cup D_{6}$ for each $B\in\cB$, it is clear that~$\cB'$ witnesses that $H\in\cE_{D}^{(y,z)}$.
The claim follows.
\endclaimproof{}
%
Arbitrarily fix $c\in[n]$ and $D\subseteq[n]$ of size $|D|=n/10^{6}$, and let~$\cF$ be the set of all possible colour classes for a proper $n$-arc colouring of~$\dirK$ (more precisely,~$\cF$ is the collection of all sets~$F$ of~$n$ arcs of~$\dirK$ such that every vertex of~$\dirK$ is the head of precisely one arc in~$F$ and the tail of precisely one arc in~$F$).
Observe that for a fixed equitable partition~$\cP=(D_{i})_{i=1}^{6}$ of~$D$ into six parts, and for fixed distinct $y,z\in[n]$, by~(\ref{eq:qhat}), the law of total probability, and Lemma~\ref{quasi-lemma},
\begin{equation}\label{eq:notqhat}
\probd{\overline{\widehat{\cQ}_{D}}}\leq\probd{\overline{\cQ_{D}^{1}}}=\sum_{F\in\cF}\probd{\mathbf{F}_{c}=F}\probd{\overline{\cQ_{D}^{1}}\mid\mathbf{F}_{c}=F}\leq\exp(-\Omega(n^{2})).
\end{equation}
Then the law of total probability,~(\ref{eq:notqhat}), and Lemma~\ref{masterswitch2} give
\begin{equation}\label{eq:sortingr}
\probd{r_{(y,z,\cP)}(\mathbf{H})\leq\frac{n^{2}}{10^{50}}}\leq \probd{r_{(y,z,\cP)}(\mathbf{H})\leq\frac{n^{2}}{10^{50}}\,\bigg|\,\widehat{\cQ}_{D}}+\probd{\overline{\widehat{\cQ}_{D}}}\leq\exp(-\Omega(n^{2})).
\end{equation}
By~(\ref{eq:sortingr}) and Claim~\ref{claim:spreadgadgets}, $\probd{\overline{\cE_{D}^{(y,z)}}}\leq\exp(-\Omega(n^{2}))$, so by a union bound we have $\probd{\overline{\cE_{D}}}\leq\exp(-\Omega(n^{2}))$.
Then by Proposition~\ref{prop:wf}, we have
\[
\prob{\overline{\cE}}\leq\prob{\overline{\cE|_{D}}}=\frac{\sum_{H\in\overline{\cE_{D}}}\text{comp}(H)}{\sum_{H\in\cG_{D}}\text{comp}(H)}\leq\probd{\overline{\cE_{D}}}\cdot\exp(O(n\log^{2}n))\leq\exp(-\Omega(n^{2})),
\]
which completes the proof of the lemma.
\end{proof}
%
%

\section{Absorption}\label{section:absorption}
The aim of this section is to show that if $G\in\Phi(\dirK)$ satisfies the conclusions of Lemmas~\ref{masterswitch1} and~\ref{main-switching-lemma}, then~$G$ admits a small robustly rainbow-Hamiltonian subdigraph~$H$ (recall Definition~\ref{def:rrh}), with arbitrarily chosen flexible sets of appropriate size.  In Section~\ref{sec:proof},~$H$ will form the key `absorbing structure'.

\begin{defin}\label{defn:(v,c)-absorber}
Let~$A$ be a $(v,c)$-absorbing gadget with abutment vertices $(x_4,  x_5)$, and let~$B$ be a $(y, z)$-bridging gadget (with all vertices retaining their notation as defined in Definitions~\ref{def:absgadg} and~\ref{def:bridgegadg}).  If $(y, z) = (x_4, x_5)$, $V(A)\cap V(B)=\{y, z\}$, and $\phi_{A}(A)\cap\phi_{B}(B)=\emptyset$, then we say $B$ \textit{bridges} $A$.
In this case, we also say a collection~$(P_{1},P_{2},P_{3},P_{4})$ of  properly arc-coloured directed paths \textit{completes} the pair~$(A,B)$ if:
\begin{itemize}
    \item $P_{1}$ has tail~$x_{2}$ and head~$x_{3}$;
    \item $P_{2}$ has tail~$w_{1}$ and head~$w_{4}$;
    \item $P_{3}$ has tail~$w_{5}$ and head~$w_{2}$;
    \item $P_{4}$ has tail~$w_{3}$ and head~$w_{6}$;
    \item $P_{1},\dots,P_{4}$ are mutually vertex-disjoint and the internal vertices of~$P_{1},\dots,P_{4}$ are disjoint from $V(A)\cup V(B)$;
    \item $\bigcup_{i=1}^{4}P_{i}$ is rainbow and shares no colour with {$A\cup B$}. 
\end{itemize}
In this case, we say that $A^* \coloneqq A\cup B\cup\bigcup_{i=1}^{4}P_{i}$ is a $(v,c)$\textit{-absorber} (see Figure~\ref{fig:vcabsorber}), and we also define the following.  
\begin{itemize}
    \item The \textit{initial vertex} of $A^*$ is $x_1$, and the \textit{terminal vertex} of $A^*$ is $x_6$.
    \item The \textit{$(v, c)$-absorbing path} in $A^*$ is the directed path with arc set $\{\myArc{x_1}{v}, \myArc{v}{x_2}, \myArc{x_3}{x_4}, \myArc{x_5}{x_6}\}\cup\bigcup_{i=1}^4E(P_i)$.
    \item The \textit{$(v, c)$-avoiding path} in $A^*$ is the directed path with arc set $\{\myArc{x_1}{x_2}, \myArc{x_3}{x_5}, \myArc{x_4}{x_6}\}\cup\bigcup_{i=1}^4E(P_i)$.
\end{itemize}
\end{defin}
Observe that the $(v,c)$-absorbing path and $(v,c)$-avoiding path of a $(v,c)$-absorber satisfy the following key properties.
\begin{enumerate}[label=(\theequation)]
\stepcounter{equation}
\item\label{abs-path-head-tail} The initial (resp.~terminal) vertex of a $(v, c)$-absorber is the tail (resp.~head) of both the $(v, c)$-absorbing path and the $(v, c)$-avoiding path.
\stepcounter{equation}
\item\label{abs-path-vtx} The $(v, c)$-absorbing path in a $(v, c)$-absorber contains all of the vertices.
\stepcounter{equation}
\item\label{abs-path-col} The $(v, c)$-absorbing path in a $(v, c)$-absorber is rainbow and contains all of the colours.
\stepcounter{equation}
\item\label{avd-path-vtx} The $(v, c)$-avoiding path in a $(v, c)$-absorber contains all of the vertices except $v$.
\stepcounter{equation}
\item\label{avd-path-col} The $(v, c)$-avoiding path in a $(v, c)$-absorber is rainbow and contains all of the colours except $c$.
\end{enumerate}
We now use $(v,c)$-absorbers to define a `$T$-absorber' for a bipartite graph~$T$, which will essentially form our absorbing structure in almost all $G\in\Phi(\dirK)$, for suitably chosen~$T$.
The role of~$T$ is to provide the `template' for which pairs~$(v,c)$ must provide a $(v,c)$-absorbing path to the rainbow directed Hamilton cycle we are building, and which pairs must provide a $(v,c)$-avoiding path.
\begin{defin}\label{defn:absorber}
Let $T$ be a bipartite graph with bipartition $(A, B)$.  A digraph $H$ equipped with a proper arc-colouring $\phi$ is a \textit{$T$-absorber} if the following holds.
\begin{enumerate}[(i)]
    \item\label{abs-def-gadgets} There exist injections $f_V : A \rightarrow V(H)$ and $f_C : B\rightarrow \phi(H)$ such that for every $ab\in E(T)$, there is a unique $(v, c)$-absorber $A_{ab} \subseteq H$, where $v = f_V(a)$ and $c = f_C(b)$, satisfying the following.
    \begin{enumerate}[(a)]
    \item\label{abs-vtx-disjoint} For every $ab\in E(T)$, if $V(A_{ab}) \cap V(A_{a'b'}) \neq \emptyset$ for some $a'b'\in E(T)$ where $a'b' \neq ab$, then $a = a'$ and $V(A_{a,b}) \cap V(A_{a'b'}) = \{f_V(a)\}$.
    \item\label{abs-col-disjoint} For every $ab\in E(T)$, if $\phi(A_{ab}) \cap \phi(A_{a'b'}) \neq \emptyset$ for some $a'b'\in E(T)$ where $a'b' \neq ab$, then $b = b'$ and $\phi(A_{ab}) \cap \phi(A_{a'b'}) = \{f_C(b)\}$.
    \end{enumerate}
    \item\label{abs-def-links} There exist pairwise vertex-disjoint length-three paths $P_1, \dots, P_{|E(T)|-1}$, each contained in $H$, satisfying the following.
    \begin{enumerate}[(a)]
    \item\label{abs-link-rainbow} $\bigcup_{i=1}^{|E(T)| - 1} P_i$ is rainbow and $\phi\left(\bigcup_{i=1}^{|E(T)| - 1} P_i\right) \cap \phi\left(\bigcup_{e\in E(T)}A_e\right) = \emptyset$.
    \item\label{abs-link-path} For some enumeration $(e_1, \dots, e_{|E(T)|})$ of $E(T)$, for each $i \in [|E(T)| - 1]$, the tail of $P_i$ is the terminal vertex of $A_{e_i}$ and the head of $P_i$ is the initial vertex of $A_{e_{i+1}}$.
    \item\label{abs-link-disjoint} For each $i \in [|E(T)| - 1]$, $P_i$ is internally vertex-disjoint from $\bigcup_{e\in E(T)}A_{e}$.
    \end{enumerate}
    \item\label{abs-def-min} Subject to~\ref{abs-def-gadgets} and~\ref{abs-def-links}, $H$ is minimal.
\end{enumerate}
In this case, we say a vertex $f_V(a)$ for some $a \in A$ is a \textit{root vertex} of $H$ and a colour $f_C(b)$ for some $b \in B$ is a \textit{root colour} of $H$. Moreover, we say the \textit{initial vertex} of $H$ is the initial vertex of $A_{e_1}$ and the \textit{terminal vertex} of $H$ is the terminal vertex of $A_{e_{|E(T)|}}$.
\end{defin}
Suppose that a bipartite graph~$T$ is robustly matchable (recall Definition~\ref{def:rmbg}) with respect to flexible sets~$A'$ and~$B'$.
The following lemma shows that a $T$-absorber is robustly rainbow-Hamiltonian (recall Definition~\ref{def:rrh}) with respect to the root vertices and colours corresponding to~$A'$ and~$B'$.
This (together with Lemmas~\ref{masterswitch1} and~\ref{main-switching-lemma}) reduces the task of finding such a subdigraph in almost all $G\in\Phi(\dirK)$ to the task of using large well-spread collections of $(v,c)$-absorbing gadgets and $(y,z)$-bridging gadgets to embed a $T$-absorber, for an appropriate robustly matchable~$T$.
\begin{lemma}\label{prop:rrH}
Let $G \in \Phi(\dirK)$ with proper $n$-arc-colouring $\phi$. 
Let ${T}$ be a bipartite graph with bipartition $(A, B)$, let $H\subseteq G$ be a $T$-absorber, and let $u$ and $v$ be the initial and terminal vertices of $H$, respectively.  Let $A'\subseteq A$ and $B' \subseteq B$, and let $V'$ and $C'$ be the set of root vertices and colours of $H$ corresponding to $A'$ and $B'$, respectively.  If $T$ is robustly matchable with respect to flexible sets $A'$ and $B'$, then $H$ is robustly rainbow-Hamiltonian with respect to flexible sets $V'$ and $C'$ and initial and terminal vertices $u$ and $v$.
\end{lemma}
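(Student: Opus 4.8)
The plan is to reduce the statement to the robust matchability of $T$.  Fix equal-sized sets $X\subseteq V'$ and $Y\subseteq C'$ with $|X|=|Y|\le\min\{|V'|/2,|C'|/2\}$ as in Definition~\ref{def:rrh}; we must produce a rainbow directed Hamilton path of $H-X$ with tail $u$, head $v$, using no colour of $Y$.  Let $f_V,f_C$ be the injections, $\{A_{ab}:ab\in E(T)\}$ the $(v,c)$-absorbers, $(e_1,\dots,e_{|E(T)|})$ the enumeration of $E(T)$, and $P_1,\dots,P_{|E(T)|-1}$ the linking paths from Definition~\ref{defn:absorber}.  Since $f_V,f_C$ are injective and $V'=f_V(A')$, $C'=f_C(B')$, the sets $X_A\coloneqq f_V^{-1}(X)\subseteq A'$ and $Y_B\coloneqq f_C^{-1}(Y)\subseteq B'$ satisfy $|X_A|=|Y_B|=|X|\le\min\{|A'|/2,|B'|/2\}$.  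Hence robust matchability of $T$ (Definition~\ref{def:rmbg}) gives a perfect matching $M$ of $T-(X_A\cup Y_B)$; in particular $M$ covers every vertex of $A\setminus X_A$ and of $B\setminus Y_B$, and no edge of $M$ meets $X_A\cup Y_B$.

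Next I build a directed walk $Q$ from $u$ to $v$.  Write $e_i=a_ib_i$.  For each $i\in[|E(T)|]$, let $Q_i$ be the $(f_V(a_i),f_C(b_i))$-absorbing path of $A_{e_i}$ if $e_i\in M$, and the $(f_V(a_i),f_C(b_i))$-avoiding path of $A_{e_i}$ otherwise.  By property~\ref{abs-path-head-tail} each $Q_i$ is a directed path from the initial vertex of $A_{e_i}$ to its terminal vertex, and by the linking condition~\ref{abs-link-path} of Definition~\ref{defn:absorber} these endpoints agree with the head of $P_{i-1}$ and the tail of $P_i$ whenever those paths exist, while the initial vertex of $A_{e_1}$ is $u$ and the terminal vertex of $A_{e_{|E(T)|}}$ is $v$.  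Thus $Q\coloneqq Q_1P_1Q_2P_2\cdots P_{|E(T)|-1}Q_{|E(T)|}$ is a well-defined directed walk from $u$ to $v$ in $H$, independent of the choice of absorbing versus avoiding path in each $Q_i$.

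It remains to verify that $Q$ is a rainbow Hamilton path of $H-X$ missing all colours of $Y$.  By properties~\ref{abs-path-vtx} and~\ref{avd-path-vtx}, together with the vertex-disjointness clause of~\ref{abs-vtx-disjoint} (which forces $f_V(a)$ to lie only on the absorbers $A_{ab}$, in each of which it is the root vertex), the vertex $f_V(a)$ is visited by $Q$ precisely when $e_i\in M$ for the $i$ with $A_{e_i}$ having root vertex $f_V(a)$ — i.e.\ precisely when $a\in A\setminus X_A$; every other vertex of $H$ lies on exactly one $A_{e_i}$ or $P_i$ (by~\ref{abs-vtx-disjoint} and~\ref{abs-link-disjoint}) and is visited by the corresponding $Q_i$ or $P_i$.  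Since $H$ is minimal (\ref{abs-def-min}), $V(Q)=V(H)\setminus f_V(X_A)=V(H)\setminus X$.  The identical argument with colours, using~\ref{abs-path-col}, \ref{avd-path-col}, the colour-disjointness clause~\ref{abs-col-disjoint}, and the rainbowness clause~\ref{abs-link-rainbow}, gives $\phi(Q)=\phi(H)\setminus f_C(Y_B)=\phi(H)\setminus Y$, so $Q$ avoids $Y$.  Finally, each $Q_i$ and each $P_i$ is a rainbow path; distinct $Q_i,Q_j$ share at most a root vertex and at most a root colour, the $P_i$ are pairwise vertex-disjoint and internally disjoint from $\bigcup_e A_e$, and $\bigcup_i P_i$ is rainbow and colour-disjoint from $\bigcup_e A_e$.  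Combining these with the fact, established above, that $Q$ uses each root vertex and each root colour at most once shows that no vertex or colour repeats along $Q$, so $Q$ is the required rainbow directed Hamilton path, and $H$ is robustly rainbow-Hamiltonian with respect to $V'$, $C'$, $u$, $v$.

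The only genuine work is the disjointness bookkeeping in the last paragraph: one has to check that two distinct segments of $Q$ can meet only at a root vertex, a root colour, or a prescribed junction between consecutive $Q_i$ and $P_i$, and that the enumeration of $E(T)$ never routes the walk back to the initial or terminal vertex of an absorber it has already traversed.  Everything else — in particular the rule for which path each absorber contributes — is dictated directly by the matching $M$, and the five key properties~\ref{abs-path-head-tail}--\ref{avd-path-col} of a $(v,c)$-absorber make each individual check routine.
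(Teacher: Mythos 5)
Your proof is correct and follows essentially the same strategy as the paper: use robust matchability to extract a perfect matching $M$ in $T-(f_V^{-1}(X)\cup f_C^{-1}(Y))$, let $M$ dictate which absorbers contribute their absorbing path and which their avoiding path, concatenate with the linking paths, and verify vertex/colour coverage using properties~\ref{abs-path-head-tail}--\ref{avd-path-col} and the disjointness clauses of Definition~\ref{defn:absorber}. The only cosmetic difference is that you build $Q$ directly as a concatenated walk (so that once each vertex is shown to be visited at most once, acyclicity is automatic), whereas the paper defines $P$ as an arc union and separately checks degree conditions plus the absence of cycles; both framings require the same disjointness bookkeeping.
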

\begin{proof}
Let $X \subseteq V'$ and $Y \subseteq C'$ such that $|X| = |Y| \leq \min\{|V'| / 2, |C'| / 2\}$.  It suffices to show that $H - X$ contains a rainbow directed Hamilton path which starts at $u$ and ends at $v$, not containing a colour in $Y$.  
Since $H$ is a $T$-absorber, by Definition~\ref{defn:absorber}\ref{abs-def-gadgets}, there exist injections $f_V : A \rightarrow V(H)$ and $f_C : B\rightarrow \phi(H)$ such that for every $ab\in E(T)$, there is a unique $(v, c)$-absorber $A_{ab} \subseteq H$, where $v \coloneqq f_V(a)$ and $c \coloneqq f_C(b)$, satisfying~\ref{abs-def-gadgets}\ref{abs-vtx-disjoint} and~\ref{abs-def-gadgets}\ref{abs-col-disjoint}.  By Definition~\ref{defn:absorber}\ref{abs-def-links}, there also exist pairwise vertex-disjoint length-three paths $P_1, \dots, P_{|E(T)| - 1}$, each contained in $H$, satisfying~\ref{abs-def-links}\ref{abs-link-rainbow}, \ref{abs-def-links}\ref{abs-link-path}, and~\ref{abs-def-links}\ref{abs-link-disjoint}.  Let $(e_1, \dots, e_{|E(T)|})$ be the enumeration of $E(T)$ guaranteed by~\ref{abs-def-links}\ref{abs-link-path}.

Since $V'$ and $C'$ are the sets of root vertices and colours of $H$ corresponding to $A'$ and $B'$, respectively, $f_V^{-1}(X) \subseteq A'$ and $f_C^{-1}(Y) \subseteq B'$.  Thus, since $T$ is robustly matchable with respect to $A'$ and $B'$, there exists a perfect matching $M$ in $T - (f_V^{-1}(X) \cup f_C^{-1}(Y))$. 
For each $ab \in E(T)$, define a directed path $P_{ab}$ as follows.  If $ab \in M$, then let $P_{ab}$ be the $(f_V(a), f_C(b))$-absorbing path in $A_{ab}$, and otherwise let $P_{ab}$ be the $(f_V(a), f_C(b))$-avoiding path in $A_{ab}$.  

Now let $P \coloneqq \bigcup_{e \in E(T)}P_e \cup \bigcup_{i=1}^{|E(T)| - 1}P_i$.  We claim that $P$ is a rainbow directed Hamilton path in $H - X$ which starts at $u$, ends at $v$, and does not contain a colour in $Y$.  To that end, we {first} show the following:
\begin{enumerate}[(a)]
    \item\label{claim:u-start} $u$ has out-degree one and in-degree zero in $P$;
    \item\label{claim:v-end} $v$ has in-degree one and out-degree zero in $P$;
    \item\label{claim:in-and-out} every $w\in V(P)\setminus \{u, v\}$ has in-degree and out-degree one in $P$;
    \item\label{claim:avoids-X} {$V(P)\cap X=\emptyset$};
    \item\label{claim:avoids-Y} {$\phi(P)\cap Y=\emptyset$};
    \item\label{claim:hamilton} $V(H)\setminus X \subseteq V(P)$.
\end{enumerate}
Indeed,~\ref{claim:u-start} and \ref{claim:v-end} follow from~\ref{abs-path-head-tail},  \ref{defn:absorber}\ref{abs-def-gadgets}\ref{abs-vtx-disjoint}, \ref{defn:absorber}\ref{abs-def-links}\ref{abs-link-path}, and~\ref{defn:absorber}\ref{abs-def-links}\ref{abs-link-disjoint}%
\COMMENT{Since $u$ is the initial vertex of $A_{e_1}$ and $v$ is the terminal vertex of $A_{e_{|E(T)|}}$,~\ref{defn:absorber}\ref{abs-def-gadgets}\ref{abs-vtx-disjoint} implies that $u$ is in $P_{e_1}$ (where it has out-degree one and in-degree zero by~\ref{abs-path-head-tail}) but not in $P_{e_i}$ for $i > 1$ and that $v$ is in $P_{e_{|E(T)|}}$ (where it has in-degree one and out-degree zero by~\ref{abs-path-head-tail}) but not in $P_{e_i}$ for $i \in [|E(T)| - 1]$, and~\ref{defn:absorber}\ref{abs-def-links}\ref{abs-link-path}  and~\ref{defn:absorber}\ref{abs-def-links}\ref{abs-link-disjoint} imply that $u$ and $v$ are not in $P_i$ for any $i \in [|E(T)| - 1]$.  Hence, both~\ref{claim:u-start} and~\ref{claim:v-end} follow.},
and~\ref{claim:in-and-out} follows from~\ref{abs-path-head-tail}, \ref{avd-path-vtx}, \ref{defn:absorber}\ref{abs-def-gadgets}\ref{abs-vtx-disjoint}, \ref{defn:absorber}\ref{abs-def-links}\ref{abs-link-path}, and~\ref{defn:absorber}\ref{abs-def-links}\ref{abs-link-disjoint}.
\COMMENT{By~\ref{defn:absorber}\ref{abs-def-links}\ref{abs-link-disjoint}, every $w\in V(P)$ is either an internal vertex of some $P_i$ for $i\in[|E(T)| - 1]$ or is contained in some $P_e$ for $e\in E(T)$, but not both.  In the former case, $w$ clearly satisfies~\ref{claim:in-and-out}.  In the latter case,~\ref{abs-path-head-tail}, \ref{defn:absorber}\ref{abs-def-gadgets}\ref{abs-vtx-disjoint} and~\ref{defn:absorber}\ref{abs-def-links}\ref{abs-link-path} imply that $w$ satisfies~\ref{claim:in-and-out} if $w \notin V'$ and that if $w \in V'$ and $w \in V(P_e)$, then $e = ab$ where $a \coloneqq f_V^{-1}(w)$.  By~\ref{avd-path-vtx}, $w \in V'$ is not in any path $P_{ab}$ where $a \coloneqq f_V^{-1}(w)$, $b \in N_T(a)$, and $ab \notin M$.  Hence, by ~\ref{defn:absorber}\ref{abs-def-gadgets}\ref{abs-vtx-disjoint} and ~\ref{defn:absorber}\ref{abs-def-links}\ref{abs-link-path}, $w$ has in-degree and out-degree one in $P_{ab}$ where $ab \in M$ and is not in any other $P_e$ or $P_i$, so $w$ satisfies~\ref{claim:in-and-out}.}

To prove~\ref{claim:avoids-X}, note that if $w \in X$, then $a \coloneqq f^{-1}_V(w) \notin V(M)$.  Thus $w$ is not in the paths $P_{ab}$ for $b \in N_T(a)$ by~\ref{avd-path-vtx} as they are all $(w, f_C(b))$-avoiding.  Therefore~\ref{claim:avoids-X} follows again by~\ref{defn:absorber}\ref{abs-def-gadgets}\ref{abs-vtx-disjoint}, \ref{defn:absorber}\ref{abs-def-links}\ref{abs-link-path}, and~\ref{defn:absorber}\ref{abs-def-links}\ref{abs-link-disjoint}.
The proof of~\ref{claim:avoids-Y} is the same, with~\ref{avd-path-col} instead of~\ref{avd-path-vtx}, \ref{defn:absorber}\ref{abs-def-gadgets}\ref{abs-col-disjoint} instead of~\ref{defn:absorber}\ref{abs-def-gadgets}\ref{abs-vtx-disjoint}, and~\ref{defn:absorber}\ref{abs-def-links}\ref{abs-link-rainbow} instead of~\ref{defn:absorber}\ref{abs-def-links}\ref{abs-link-path} and~\ref{defn:absorber}\ref{abs-def-links}\ref{abs-link-disjoint}.
To prove~\ref{claim:hamilton}, first note that if $w \in V' \setminus X$, then there exists $ab\in M$, where $a \coloneqq f^{-1}_V(w)$.  Thus, $w \in V(P_{ab})$ by~\ref{abs-path-vtx} since $P_{ab}$ is $(w, f_C(b))$-absorbing. In particular, $V'\setminus X \subseteq V(P)$.  By~\ref{abs-path-vtx} and~\ref{avd-path-vtx},~\ref{defn:absorber}\ref{abs-def-min} implies that $V(H)\setminus V' \subseteq V(P)$.  Thus, $V(H)\setminus X\subseteq V(P)$, as desired.

By~\ref{abs-path-head-tail}, \ref{defn:absorber}\ref{abs-def-links}\ref{abs-link-path}, \ref{defn:absorber}\ref{abs-def-links}\ref{abs-link-disjoint}, and~\ref{claim:u-start}-\ref{claim:in-and-out}, $P$ contains no cycle,\COMMENT{By~\ref{abs-path-head-tail},  \ref{defn:absorber}\ref{abs-def-links}\ref{abs-link-path}, and~\ref{defn:absorber}\ref{abs-def-links}\ref{abs-link-disjoint}, together with~\ref{claim:u-start}-\ref{claim:in-and-out}, if $w \in V(P_{e_i})$ for $i \in [|E(T)|]$ where $w \neq v$, then the out-neighbour of $w$ in $P$ is in $V(P_i)$ if $w$ is the terminal vertex of $A_{e_i}$ and in $V(P_{e_i})$ otherwise, and if $w \in V(P_i)$ for $i \in [|E(T)| - 1]$, then the out-neighbour of $w$ in $P$ is in $V(P_{e_{i+1}})$ if $w$ is the tail of $P_i$ and in $V(P_i)$ otherwise.  Thus, for every $i \in [|E(T)|]\setminus\{1\}$, there is no directed path in $P$ with tail in $V(P_{e_i})$ and head in $V(P_j)\cup V(P_{e_j})$ for $j < i$, and for every $i \in [|E(T)| - 1]\setminus\{1\}$, there is no directed path in $P$ with tail in $V(P_i)$ and head in $V(P_i)$ for $j < i$ or in $V(P_{e_j})$ for $j \leq i$.  Consequently, a cycle in $P$ would be contained in some $P_i$ or $P_{e_i}$, a contradiction.}
so \ref{claim:u-start}-\ref{claim:avoids-Y} imply that $P$ is indeed a directed path in $H - X$, {not containing a colour in~$Y$,} which starts at $u$ and ends at $v$, and~\ref{claim:hamilton} implies that $P$ is Hamilton in $H - X$, as required. %
It remains to show that~$P$ is rainbow.  By~\ref{abs-path-col}, \ref{avd-path-col}, and~\ref{defn:absorber}\ref{abs-def-gadgets}\ref{abs-col-disjoint}, $\bigcup_{e \in E(T)}P_e$ is rainbow, so~\ref{defn:absorber}\ref{abs-def-links}\ref{abs-link-rainbow} implies that $P$ is rainbow, as required. 
\end{proof}

The following proposition implies that there are many short rainbow paths that are `well-spread' in \textit{all} $G\in\Phi(\dirK)$. This enables us to embed these paths in any such~$G$ in a vertex- and colour-disjoint way whilst constructing a $T$-absorber for suitably chosen~$T$, and whilst absorbing the colours unused by the large rainbow directed path forests we find in Section~\ref{sec:proof}.
\begin{prop}\label{prop:num-links}
    Let $G \in \Phi(\dirK)$ with proper $n$-arc-colouring $\phi$, let $V\coloneqq V(G)$, and let $C \coloneqq \phi(G)$.
    Let $u, v \in V$ such that $u\neq v$, and let $c\in C$.  The following holds for $n$ sufficiently large.
    \begin{enumerate}[(i)]
        \item\label{lb:links} There are at least $n^2 / 3$ length-three directed rainbow paths in $G$ with head $v$ and tail $u$.
        \item\label{lb:covers} If at most $n / 2$ loops in $G$ are coloured $c$, then there are at least $n^2 / 5$ length-four directed rainbow paths in $G$ with head $v$ and tail $u$ such that the second arc is coloured $c$.
        \item\label{ub:link-vtx} For every $w\in V$, there are at most $2n$ length-three directed rainbow paths in $G$ with head $v$ and tail $u$ that contain $w$ as an internal vertex.
        \item\label{ub:cover-vtx} For every $w\in V$, there are at most $3n$ length-four directed rainbow paths in $G$ with head $v$ and tail $u$ that contain $w$ as an internal vertex such that the second arc is coloured $c$.
        \item\label{ub:link-col} For every $d\in C$, there are at most $3n$ length-three directed rainbow paths in $G$ with head $v$ and tail $u$ that contain an arc coloured $d$.
        \item\label{ub:cover-col} For every $d\in C\setminus\{c\}$, there are at most $3n$ length-four directed rainbow paths in $G$ with head $v$ and tail $u$ that contain an arc coloured $d$ such that the second arc is coloured $c$.
    \end{enumerate}
\end{prop}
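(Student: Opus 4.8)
The plan is to exploit the rigid structure of $G$: since $G\cong\dirK$ and $\phi$ is a proper $n$-arc-colouring, for every vertex $w$ the maps $d\mapsto N^+_d(w)$ and $d\mapsto N^-_d(w)$ are bijections from $C$ to $V$. Consequently, once we decide which arcs of a prospective path carry which prescribed colours, the endpoints of those arcs are pinned down, so only a few vertices of the path remain free, each with fewer than $n$ choices. All six bounds follow from this principle; the only genuinely delicate point occurs in~\ref{lb:covers}.

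For~\ref{lb:links}, count directed paths $u\to w_1\to w_2\to v$ by choosing $w_1\in V\setminus\{u,v\}$ and then $w_2\in V\setminus\{u,v,w_1\}$, giving $(n-2)(n-3)$ ordered pairs, each a genuine directed path since $\dirK$ contains all arcs. Such a path is non-rainbow only if two of its three arcs share a colour, and for each of the three pairs of arcs and each colour $d$ at most one pair $(w_1,w_2)$ realises the coincidence (e.g.\ $\phi(uw_1)=\phi(w_1w_2)=d$ forces $w_1=N^+_d(u)$ and then $w_2=N^+_d(w_1)$; $\phi(uw_1)=\phi(w_2v)=d$ forces $w_1=N^+_d(u)$ and $w_2=N^-_d(v)$; and $\phi(w_1w_2)=\phi(w_2v)=d$ forces $w_2=N^-_d(v)$ and $w_1=N^-_d(w_2)$). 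Hence at most $3n$ pairs are bad and $(n-2)(n-3)-3n\geq n^2/3$ for $n$ large. The upper bounds~\ref{ub:link-vtx} and~\ref{ub:link-col} follow from the same viewpoint: fixing which internal vertex of $u\to w_1\to w_2\to v$ equals $w$ (two cases), respectively which of its three arcs is coloured $d$ (three cases), leaves exactly one free internal vertex, hence fewer than $n$ paths per case, so at most $2n$, respectively $3n$, in total.

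For~\ref{lb:covers} build $u\to w_1\to w_2\to w_3\to v$ by first choosing its (necessarily non-loop) second arc $w_1w_2$ from the colour class of $c$, which has exactly $n$ arcs of which at most $n/2$ are loops; discarding the at most $5$ non-loop $c$-arcs meeting $\{u,v\}$ or with $\phi(uw_1)=c$ leaves at least $n/2-5$ admissible choices. For a fixed admissible $c$-arc choose $w_3\in V\setminus\{u,v,w_1,w_2\}$; the resulting path has second arc coloured $c$ and is rainbow unless $\phi(w_2w_3)$ or $\phi(w_3v)$ lies in $\{\phi(uw_1),c\}$, or $\phi(w_2w_3)=\phi(w_3v)$. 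Each of the first four events forbids at most one $w_3$ per admissible arc. The event $\phi(w_2w_3)=\phi(w_3v)$ is the main obstacle, since for an unfavourable $w_2$ it can forbid $\Theta(n)$ choices of $w_3$, making a per-arc bound useless; instead observe that for each $w_3$ exactly one vertex $w_2$, namely $N^-_d(w_3)$ with $d=\phi(w_3v)$, satisfies $\phi(w_2w_3)=\phi(w_3v)$, whence $\sum_{w_2\in V}\bigl|\{w_3\in V:\phi(w_2w_3)=\phi(w_3v)\}\bigr|=n$. Since distinct $c$-arcs have distinct heads $w_2$, this event costs at most $n$ pairs (admissible arc, $w_3$) in total. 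Therefore the number of valid paths is at least $(n/2-5)(n-8)-n\geq n^2/5$ for $n$ large, and distinct choices evidently give distinct paths.

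Finally,~\ref{ub:cover-vtx} and~\ref{ub:cover-col} follow as in the second paragraph, now using in addition the colour $c$ of the second arc. For~\ref{ub:cover-vtx}, fix which of the three internal vertices of $u\to w_1\to w_2\to w_3\to v$ equals $w$: if $w\in\{w_1,w_2\}$ then the other endpoint of the $c$-arc is determined (as $N^+_c(w)$ or $N^-_c(w)$) and $w_3$ is the only free internal vertex; if $w=w_3$ then $w_1$ is free and $w_2=N^+_c(w_1)$; either way fewer than $n$ paths per case, so at most $3n$ in total. For~\ref{ub:cover-col}, fix which of the three arcs other than the second (which cannot be coloured $d$ since $d\neq c$) is coloured $d$: its colour fixes one of its endpoints, or, if it is $w_2w_3$, fixes its head given its tail $w_2=N^+_c(w_1)$, and in every case $w_2=N^+_c(w_1)$ pins down the remaining internal vertex, leaving at most one free vertex and hence fewer than $n$ paths per case, so at most $3n$ in total.
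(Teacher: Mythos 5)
Your proof is correct, and parts~\ref{ub:link-vtx}--\ref{ub:cover-col} coincide with the paper's argument (partition by the position of the marked vertex or the marked colour, then note each part contributes at most $n$ paths). For parts~\ref{lb:links} and~\ref{lb:covers} you take a genuinely different route. The paper first proves a small structural lemma: calling a vertex $w$ \emph{bad} if more than $n/2$ vertices $x$ satisfy $\phi(wx)=\phi(xv)$, it observes that at most one vertex can be bad (two bad vertices $w,w'$ would give some $x$ with $\phi(wx)=\phi(xv)=\phi(w'x)$, contradicting properness). It then builds the path by choosing $w_1$ (resp.\ the head $w_2$ of the $c$-arc) outside the bad set, guaranteeing $\geq n/2 - O(1)$ completions for the last internal vertex. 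You instead count all candidate tuples and subtract failures: in~\ref{lb:links} each of the three colour-clash types contributes at most one bad pair $(w_1,w_2)$ per colour, hence $3n$ in total; in~\ref{lb:covers} the four clashes involving $\phi(uw_1)$ or $c$ cost $O(1)$ per $c$-arc, and the remaining clash $\phi(w_2w_3)=\phi(w_3v)$ — which cannot be bounded per-arc — is handled by a double-count over $w_3$, showing it costs at most $n$ (arc, $w_3$) pairs in total, using that distinct $c$-arcs have distinct heads. Both approaches give the stated bounds; yours avoids the bad-vertex lemma entirely, is arguably more self-contained, and incidentally produces a slightly better constant (roughly $n^2/2$ rather than $n^2/4$ in~\ref{lb:covers}), while the paper's structural observation localises the obstruction to a single vertex, which is conceptually clean and reused in both~\ref{lb:links} and~\ref{lb:covers}.
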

\begin{proof}
    First, for each vertex $w \in V$, we let $B_w \coloneqq \{x \in V\setminus \{w, v\} : \phi(\myArc{w}{x}) = \phi(\myArc{x}{v})\}$, we say $w$ is \textit{bad} if $|B_w| > n / 2$, and we let $B \subseteq V$ be the set of bad vertices.  We claim that there is at most one bad vertex; that is, $|B| \leq 1$.  To that end, suppose for a contradiction that distinct vertices $w$ and $w'$ are bad.  Since $|B_w| + |B_{w'}| > n$, we have $B_w \cap B_{w'} \neq \emptyset$.  Thus, there exists some $x \in B_w\cap B_{w'}$, so $\phi(\myArc{w}{x}) = \phi(\myArc{x}{v}) = \phi(\myArc{w'}{x})$, contradicting that $\phi$ is a  proper arc-colouring.

    Now we prove~\ref{lb:links}.  Since $|B| \leq 1$, there are at least $(n - 3)(n/2 - 5)$ choices of an ordered pair $(w_1, w_2)$ where $w_1\in V\setminus(B\cup \{u, v\})$ and $w_2 \in V\setminus (B_{w_1}\cup\{u, v, w_1\})$ such that $\phi(\myArc{u}{w_1}) \notin \{\phi(\myArc{w_1}{w_2}), \phi(\myArc{w_2}{v})\}$.  For each such pair, there is a distinct directed path $P_{w_1,w_2} \coloneqq uw_1w_2v$ in $G$.  Since $\phi(\myArc{u}{w_1}) \notin \{\phi(\myArc{w_1}{w_2}), \phi(\myArc{w_2}{v})\}$ and $w_2\notin B_{w_1}$, $P_{w_1, w_2}$ is rainbow.  Therefore there are at least $n^2 / 3$ directed length-three rainbow paths in $G$ with head $v$ and tail $u$, as desired.
    
    Now we prove~\ref{lb:covers}.  Let $X \coloneqq \{w \in V : \phi(\myArc{w}{w}) = c\}$; by assumption, $|X| \leq n / 2$.  Thus, since $|B| \leq 1$, there are at least $n/2 - 6$ choices of an ordered pair $(w_1, w_2)$ where $w_2 \in V\setminus (X\cup B\cup\{u, v\})$, $w_1 \in V\setminus \{u, v, w_2\}$, $\phi(\myArc{w_1}{w_2}) = c$, and $\phi(\myArc{u}{w_1}) \neq c$.  For each such pair, there are at least $n / 2 - 8$ choices of a vertex $w_3 \in V\setminus (B_{w_2} \cup \{u, v, w_1, w_2\})$ such that $\{\phi(\myArc{w_2}{w_3}), \phi(\myArc{w_3}{v})\} \cap \{\phi(\myArc{u}{w_1}), c\} = \emptyset$.  For each such choice of $(w_1, w_2, w_3)$, there is a distinct directed length-four rainbow path $P_{w_1, w_2, w_3} \coloneqq uw_1w_2w_3v$ with head $v$ and tail $u$ such that the second arc is coloured $c$.  Therefore there are at least $(n/2 - 7)(n/ 2 - 8) \geq n^{2} / 5$ such paths, as desired.
    
    The proofs of~\ref{ub:link-vtx}-\ref{ub:cover-col} are similar, so we only provide a complete proof of~\ref{ub:cover-col}.\COMMENT{
    Now we prove~\ref{ub:link-vtx}. {Fix $w\in V$, and} let $\cP_{\ref{ub:link-vtx}}$ be the set of length-three rainbow directed paths in $G$ with head $v$ and tail $u$ that contain $w$ as an internal vertex.  Partition $\cP_{\ref{ub:link-vtx}}$ into sets $\cP_{\ref{ub:link-vtx},1}$ and $\cP_{\ref{ub:link-vtx},2}$ such that for $i \in [2]$, a path $P \in \cP_{\ref{ub:link-vtx},i}$ if $w$ is the $(i + 1)$th vertex of $P$.  Every path in $\cP_{\ref{ub:link-vtx},1}$ is uniquely determined by the vertex in the path adjacent to $v$, and similarly, every path in $\cP_{\ref{ub:link-vtx},2}$ is uniquely determined by the vertex in the path adjacent to $u$.  Therefore $|\cP_{\ref{ub:link-vtx},i}| \leq n$ for each $i \in [2]$, so $|\cP_{\ref{ub:link-vtx}}| \leq 2n$, as desired.
    Now we prove~\ref{ub:cover-vtx}. {Fix $w\in V$, and} let $\cP_{\ref{ub:cover-vtx}}$ be the set of length-four rainbow directed paths in $G$ with head $v$ and tail $u$ that contain $w$ as an internal vertex such that the second arc is coloured $c$.  Partition $\cP_{\ref{ub:cover-vtx}}$ into sets $\cP_{\ref{ub:cover-vtx},1}$, $\cP_{\ref{ub:cover-vtx},2}$, and $\cP_{\ref{ub:cover-vtx},3}$ such that for $i \in [3]$, a path $P \in \cP_{\ref{ub:cover-vtx},i}$ if $w$ is the $(i + 1)$th vertex of $P$.  Every path in $\cP_{\ref{ub:cover-vtx},1}$ or $\cP_{\ref{ub:cover-vtx},2}$ is uniquely determined by the vertex adjacent to $v$, and every path in $\cP_{\ref{ub:cover-vtx},3}$ is uniquely determined by an ordered pair $(w_1, w_2)$ such that $\phi(\myArc{w_1}{w_2}) = c$.  Therefore $|\cP_{\ref{ub:cover-vtx},i}| \leq n$ for each $i \in [3]$, so $|\cP_{\ref{ub:cover-vtx}}| \leq 3n$, as desired.
    Now we prove~\ref{ub:link-col}. {Fix $d\in C$, and} let $\cP_{\ref{ub:link-col}}$ be the set of length-three rainbow directed paths in $G$ with head $v$ and tail $u$ that contain an arc coloured $d$.   Partition $\cP_{\ref{ub:link-col}}$ into sets $\cP_{\ref{ub:link-col},1}$, $\cP_{\ref{ub:link-col},2}$, and $\cP_{\ref{ub:link-col},3}$ such that for $i \in [3]$, a path $P \in \cP_{\ref{ub:link-col},i}$ if the arc coloured $d$ is the $i$th arc of $P$.  Every path in $\cP_{\ref{ub:link-col},1}$ is uniquely determined by the vertex in the path adjacent to $v$, and similarly, every path in $\cP_{\ref{ub:link-col},3}$ is uniquely determined by the vertex in the path adjacent to $u$.  Every path in $\cP_{\ref{ub:link-col},2}$ is uniquely determined by an ordered pair $(w_1, w_2)$ such that $\phi(\myArc{w_1}{w_2}) = d$.  Therefore $|\cP_{\ref{ub:link-col},i}| \leq n$ for each $i \in [3]$, so $|\cP_{\ref{ub:link-col}}| \leq 3n$, as desired.
    }
    {Fix $d\in C\setminus\{c\}$, and} let $\cP_{\ref{ub:cover-col}}$ be the set of length-four rainbow directed paths in $G$ with head $v$ and tail $u$ that contain an arc coloured $d$ such that the second arc is coloured $c$.  Partition $\cP_{\ref{ub:cover-col}}$ into sets $\cP_{\ref{ub:cover-col},1}, \dots, \cP_{\ref{ub:cover-col},4}$ such that for $i \in [4]$, a path $P \in \cP_{\ref{ub:cover-col},i}$ if the arc coloured $d$ is the $i$th arc of $P$. Since $d \neq c$, $\cP_{\ref{ub:cover-col},2} = \emptyset$.  Every path in $\cP_{\ref{ub:cover-col},1}$ is uniquely determined by the vertex in the path adjacent to $v$, every path in $\cP_{\ref{ub:cover-col},3}$ is uniquely determined by an ordered pair $(w_2, w_3)$ such that $\phi(\myArc{w_2}{w_3}) = d$, and every path in $\cP_{\ref{ub:cover-col},4}$ is uniquely determined by an ordered pair $(w_1, w_2)$ such that $\phi(\myArc{w_1}{w_2}) = c$.  Therefore $|\cP_{\ref{ub:cover-col}}| \leq 3n$, as desired.
    
    We conclude by outlining the necessary changes to the proof of~\ref{ub:cover-col} to obtain proofs of~\ref{ub:link-vtx}-\ref{ub:link-col}.  Define $\cP_{\ref{ub:link-vtx}}$, $\cP_{\ref{ub:cover-vtx}}$, and~$\cP_{\ref{ub:link-col}}$ in an analogous way.  Partition $\cP_{\ref{ub:link-vtx}}$ and $\cP_{\ref{ub:cover-vtx}}$ based on the position of $w$ in each path, and partition~$\cP_{\ref{ub:link-col}}$ based on the position of the arc coloured $d$.  Finally, show that each part contains at most $n$ paths.
\end{proof}
For any $m=\omega(1)$ (in Section~\ref{sec:proof} we will set $m\coloneqq\lfloor n/\log^{3}n\rfloor$, and we assume~$n$ to be sufficiently large) we have by Lemma~\ref{lemma:2rmbg} that there exists a $256$-regular $2RMBG(7m,2m)$, say~$T$ (recall Definition~\ref{def:rmbg}).
In the following lemma, we show that if~$m<n/\log n$, then we may greedily embed a~$T$-absorber in any $G\in\Phi(\dirK)$ that satisfies the conclusions of Lemma~\ref{masterswitch1} and~\ref{main-switching-lemma}, by choosing each absorber successively in three steps: for each edge $vc$ of~$T$, we first embed a $(v,c)$-absorbing gadget~$A$, then choose a bridging gadget~$B$ that bridges~$A$ (recall Definition~\ref{defn:(v,c)-absorber}), and finally use Proposition~\ref{prop:num-links} to embed the extra short rainbow paths required to complete the $(v,c)$-absorber and connect it to the previously embedded absorber.
\begin{lemma}\label{lemma:habsorber}
Let $G \in \Phi(\dirK)$ with proper $n$-arc-colouring $\phi$, let $V\coloneqq V(G)$, and let $C \coloneqq \phi(G)$.
Let $m < n / \log n$, let $T$ be a $256$-regular $2RMBG(7m, 2m)$, let $U\subseteq V$ and $D\subseteq C$ such that $|U| = |D| = 7m$, and let $V' \subseteq U$ and $C' \subseteq D$ such that $|V'| = |C'| = 2m$.  For $n$ sufficiently large, if
\begin{itemize}
    \item for all $v \in V$ and $c\in C$,~$G$ contains a well-spread collection~$\cA_{v,c}$ of at least~$n^{2}/2^{100}$ $(v,c)$-absorbing gadgets and 
    \item for all distinct $y,z\in V$,~$G$ contains a well-spread collection~$\cB_{y,z}$ of at least~$n^{2}/10^{50}$ $(y,z)$-bridging gadgets,
\end{itemize}
then~$G$ contains a $T$-absorber $H$ rooted on vertices~$U$ and colours~$D$ such that $V'$ and $C'$ are the sets of root vertices and colours of $H$ corresponding to the flexible sets of $T$.
\end{lemma}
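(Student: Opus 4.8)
The idea is to build $H$ greedily: fix injections $f_V$ and $f_C$, then for each edge of $T$ in turn, embed one $(v,c)$-absorber together with a linking path, using the large well-spread gadget collections and Proposition~\ref{prop:num-links} to keep the new pieces disjoint from everything placed so far apart from the forced sharing of roots. Since $|A|=|U|=7m$, $|B|=|D|=7m$, $|A'|=|V'|=2m$ and $|B'|=|C'|=2m$, first choose bijections $f_V\colon A\to U$ and $f_C\colon B\to D$ with $f_V(A')=V'$ and $f_C(B')=C'$; these will witness Definition~\ref{defn:absorber}\ref{abs-def-gadgets}. Enumerate $E(T)=\{e_1,\dots,e_N\}$, where $N=|E(T)|=256\cdot 7m=1792m$ since $T$ is $256$-regular, and for $e_i=a_ib_i$ put $v_i\coloneqq f_V(a_i)$, $c_i\coloneqq f_C(b_i)$. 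Maintain sets $\Xi_V\subseteq V$ and $\Xi_C\subseteq C$ of used vertices and colours, initialised to $U$ and $D$. At step $i$, having built $A_{e_1},\dots,A_{e_{i-1}}$ and linking paths $Q_1,\dots,Q_{i-2}$: \emph{(a)} pick a $(v_i,c_i)$-absorbing gadget $A$ from $\cA_{v_i,c_i}$ whose vertices other than $v_i$ miss $\Xi_V$ and whose colours other than $c_i$ miss $\Xi_C$; \emph{(b)} letting $(x_4,x_5)$ be its abutment vertices, pick a $(x_4,x_5)$-bridging gadget $B$ from $\cB_{x_4,x_5}$ bridging $A$ (Definition~\ref{defn:(v,c)-absorber}), with additionally $V(B)\setminus\{x_4,x_5\}$ missing $\Xi_V$ and $\phi(B)$ missing $\Xi_C$; \emph{(c)} using Proposition~\ref{prop:num-links}\ref{lb:links} repeatedly, pick length-three directed rainbow paths $P_1,P_2,P_3,P_4$ with the endpoints prescribed in Definition~\ref{defn:(v,c)-absorber} (tails/heads $x_2,x_3$; $w_1,w_4$; $w_5,w_2$; $w_3,w_6$) and, for $i\ge2$, a length-three directed rainbow path $Q_{i-1}$ from the terminal vertex of $A_{e_{i-1}}$ to the vertex $x_1$ of $A$, each chosen with internal vertices and colours avoiding $\Xi_V\cup V(A)\cup V(B)$ and $\Xi_C\cup\phi(A)\cup\phi(B)$ and the vertices/colours of the paths already chosen in this substep. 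Then set $A_{e_i}\coloneqq A\cup B\cup\bigcup_{k=1}^4P_k$, add $V(A_{e_i})\cup V(Q_{i-1})$ to $\Xi_V$ and $\phi(A_{e_i})\cup\phi(Q_{i-1})$ to $\Xi_C$, and continue; finally set $H\coloneqq\bigcup_{i=1}^NA_{e_i}\cup\bigcup_{j=1}^{N-1}Q_j$.

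Each absorber and each path uses $O(1)$ vertices and colours, so throughout $|\Xi_V|,|\Xi_C|=O(N)+7m=O(m)$. By well-spreadness (Definition~\ref{def:wellspread}), each vertex of $\Xi_V$ other than $v_i$ lies in at most $n$ members of $\cA_{v_i,c_i}$ and each colour of $\Xi_C$ other than $c_i$ in at most $n$, so at most $O(mn)$ members of $\cA_{v_i,c_i}$ are excluded in~(a); since $m<n/\log n$ this is $o(n^2)<n^2/2^{100}\le|\cA_{v_i,c_i}|$ for $n$ large, so a valid $A$ exists. The same count with $|\cB_{x_4,x_5}|\ge n^2/10^{50}$ handles~(b) (the shared vertices $x_4,x_5$ need not be avoided, as they are permitted to coincide with $V(A)$). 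For~(c), Proposition~\ref{prop:num-links}\ref{lb:links} gives $\ge n^2/3$ candidate paths for each required pair of endpoints (which are distinct, since the gadget vertices are distinct and newly chosen vertices avoided $\Xi_V$), and by Proposition~\ref{prop:num-links}\ref{ub:link-vtx} and~\ref{ub:link-col} at most $2n$ (resp.\ $3n$) of them contain a fixed internal vertex (resp.\ a fixed colour); as only $O(m)$ vertices and colours are forbidden, $o(n^2)$ candidates are excluded, so a valid path exists.

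It remains to check $H$ is a $T$-absorber rooted as claimed. Since $A_{e_i}$ is a $(v_i,c_i)$-absorber, $f_V$ and $f_C$ witness Definition~\ref{defn:absorber}\ref{abs-def-gadgets} once uniqueness is known. For $j<i$, every vertex of $A_{e_i}$ other than $v_i$ was chosen outside $\Xi_V\supseteq V(A_{e_j})\cup U$, so $V(A_{e_i})\cap V(A_{e_j})\subseteq\{v_i\}$, with equality forcing $v_i=v_j$, i.e.\ $a_i=a_j$ (as $v_j$ is the only vertex of $A_{e_j}$ in $U$); this gives Definition~\ref{defn:absorber}\ref{abs-def-gadgets}\ref{abs-vtx-disjoint}, and the colour statement~\ref{abs-def-gadgets}\ref{abs-col-disjoint} is analogous. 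For uniqueness: the only arcs of $H$ incident to $v_i$ are the ``cherries'' $x_1^{(j)}v_i,v_ix_2^{(j)}$ of the $256$ absorbers $A_{e_j}$ with $a_j=a_i$, and since the non-root colours of each $A_{e_j}$ are private to it (they avoid $D$ and every other absorber), tracing colour classes forces any $(v_i,c_i)$-absorber inside $H$ to be one of these $A_{e_j}$, which then forces $c_j=c_i$, hence $b_j=b_i$ and $e_j=e_i$. The paths $Q_1,\dots,Q_{N-1}$ satisfy Definition~\ref{defn:absorber}\ref{abs-def-links}: they are pairwise vertex-disjoint and internally vertex-disjoint from $\bigcup_eA_e$ by the avoidance rules, rainbow and colour-disjoint from $\bigcup_eA_e$ (their colours avoid $D$ and all absorber colours), and~\ref{abs-def-links}\ref{abs-link-path} holds with the enumeration $(e_1,\dots,e_N)$. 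Taking $H$ to be exactly the stated union gives minimality~\ref{abs-def-min}; finally $U=f_V(A)\subseteq V(H)$ and $D=f_C(B)\subseteq\phi(H)$ since every $a\in A$, $b\in B$ lies on an edge of $T$, and $V'=f_V(A')$, $C'=f_C(B')$ are the root vertices and colours corresponding to the flexible sets of $T$.

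The main obstacle is not the existence of the greedy choices, which reduces cleanly to the well-spreadness bounds, Proposition~\ref{prop:num-links}, and the slack $m<n/\log n$, but the bookkeeping needed to ensure the final union meets Definition~\ref{defn:absorber} \emph{exactly}: keeping the used sets of size $O(m)$ throughout, making the absorbers overlap in precisely their common roots, and -- the crux -- proving that each $A_{ab}$ is the \emph{unique} $(v,c)$-absorber in $H$. This last point is the reason for insisting that all non-root vertices and colours of every absorber avoid $U$, $D$, and all previously placed structures, so that each absorber's private colour classes become rigid inside $H$ and no spurious $(v,c)$-absorber can appear.
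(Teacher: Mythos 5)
Your proposal is correct and takes essentially the same approach as the paper: fix bijections $f_V,f_C$ matching the flexible sets, enumerate $E(T)$, and greedily build one $(v,c)$-absorber plus one linking path per edge, using the well-spread collections for the gadgets and Proposition~\ref{prop:num-links} for the short paths, with a counting argument that exploits $m<n/\log n$ to ensure only $o(n^2)$ candidates are ever excluded. The bookkeeping differs cosmetically (you track used sets $\Xi_V,\Xi_C$; the paper maintains explicit disjointness conditions (a)$_j$--(h)$_j$), but the content is the same; one small point is that in step (b) the vertices $V(A)\setminus\{x_4,x_5\}$ and colours $\phi(A)$ are not yet in $\Xi_V,\Xi_C$, so the bridging gadget must also explicitly avoid these — you implicitly do so by invoking "bridges $A$", and the $O(1)$ extra forbidden items do not affect the count, but it is worth stating.
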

\begin{proof}
Denote the bipartition of $T$ by $(A, B)$, let $A' \subseteq A$ and $B'\subseteq B$ be the flexible sets of $T$, and enumerate the edges of $T$ as $e_1, \dots, e_{|E(T)|}$.  
Let $f_V : A \rightarrow U$ and $f_C : B \rightarrow D$ be bijections, chosen such that $f_V(A') = V'$ and $f_C(B') = C'$.  For each $j \in [|E(T)|]$, we inductively choose 
\begin{enumerate}[(i)]
    \item\label{constructing-abs:gadget} a $(v, c)$-absorbing gadget $A_j$ in $G$, where $ab \coloneqq e_j$, $v\coloneqq f_V(a)$, and $c \coloneqq f_V(b)$, 
    \item\label{constructing-abs:bridge} a $(y_j, z_j)$-bridging gadget $B_j$ in $G$, where $(y_j, z_j)$ is the pair of abutment vertices of $A_j$, which bridges $A_j$, 
    and 
    \item\label{constructing-abs:links} length-three rainbow {directed} paths $P_{j,1}, \dots, P_{j,5}$ in $G$, such that $P_{j,1}, \dots, P_{j,4}$ complete the pair $(A_j, B_j)$ to a $(v, c)$-absorber $A^*_j$,
\end{enumerate}
such that the following holds:
\begin{enumerate}[(a)$_j$]
    \item\label{constructing-abs:gadget-vtx} $V(A_j) \cap U = \{f_V(a)\}$, $V(A_j) \cap V(P_{\ell, 5}) = \emptyset$ for all $\ell < j$, and if $V(A_j) \cap V(A_\ell^*) \neq \emptyset$ for {some} $\ell < j$, then $V(A_j) \cap V(A_\ell^*) = \{f_V(a)\}$, where $a \in e_j \cap e_\ell \subseteq A$;
    \item\label{constructing-abs:gadget-col} $\phi(A_j) \cap D = \{f_C(b)\}$, $\phi(A_j) \cap \phi(P_{\ell, 5}) = \emptyset$ for all $\ell < j$, and if $\phi(A_j) \cap \phi(A_\ell^*) \neq \emptyset$ for {some} $\ell < j$, then $\phi(A_j) \cap \phi(A_\ell^*) = \{f_C(b)\}$, where $b \in e_j \cap e_\ell \subseteq B$;
    \item\label{constructing-abs:bridge-vtx} $V(B_j) \cap V(A_\ell^* \cup P_{\ell, 5}) = \emptyset$ for all $\ell < j$, and $V(B_j) \cap U = \emptyset$;
    \item\label{constructing-abs:bridge-col} $\phi(B_j) \cap \phi(A_\ell^* \cup P_{\ell, 5}) = \emptyset$ for all $\ell < j$, and $\phi(B_j) \cap D = \emptyset$;
    \item\label{constructing-abs:paths-vtx} $V(P_{j,k}) \cap V(A_\ell^* \cup P_{\ell, 5}) = \emptyset$ for all $k \in [4]$ and $\ell < j$, and $V(P_{j, k}) \cap U = \emptyset$ for all $k \in [5]$;
    \item\label{constructing-abs:paths-col} $\phi(P_{j,k}) \cap \phi(A_\ell^* \cup P_{\ell, 5}) = \emptyset$ for all $k \in [4]$ and $\ell < j$, and $\phi(P_{j, k}) \cap D = \emptyset$ for all $k \in [5]$;
    \item\label{constructing-abs:link-col} $\phi(P_{j,5}) \cap \phi(A_\ell^*) = \emptyset$ for all $\ell \leq j$, and $\phi(P_{j,5}) \cap \phi(P_{\ell, 5}) = \emptyset$ for all $\ell < j$; 
    \item\label{constructing-abs:link-vtx} if $j > 1$, then the tail of $P_{j,5}$ is the terminal vertex of $A^*_{j - 1}$, the head of $P_{j,5}$ is the initial vertex of $A^*_j$, $P_{j,5}$ is internally vertex-disjoint from $A^*_\ell$ for all $\ell \leq j$, $V(P_{j,5}) \cap V(P_{\ell, 5}) = \emptyset$ for all $\ell < j$, and if $j = 1$, then $P_{j,5} = \emptyset$.
\end{enumerate}
To that end, we let $i \in [|E(T)|]$ and assume $A_j$, $B_j$, and $P_{j,1}, \dots, P_{j,5}$ satisfying~\ref{constructing-abs:gadget-vtx}-\ref{constructing-abs:link-vtx} have been chosen for $j < i$, and we show that we can indeed choose $A_j$, $B_j$, and $P_{j,1}, \dots, P_{j,5}$ according to~\ref{constructing-abs:gadget}-\ref{constructing-abs:links} satisfying~\ref{constructing-abs:gadget-vtx}-\ref{constructing-abs:link-vtx} for $j = i$.
Let $U' \coloneqq U \cup \bigcup_{\ell < j} V(A^*_\ell \cup P_{\ell, 5})$, and let $D' \coloneqq D \cup \bigcup_{\ell < j}\phi(A^*_\ell \cup P_{\ell, 5})$.
For every $\ell<j$, we have by~\ref{constructing-abs:gadget}--\ref{constructing-abs:links} that $|V(A^{*}_{\ell}\cup P_{\ell,5})|$,~$|\phi(A^{*}_{\ell}\cup P_{\ell,5})|\leq 24$.
Thus, since $T$ is $256$-regular and $m < n / \log n$, 
\begin{equation}\label{eq:used-vtcs-col-bound}
    |U'|,|D'| \leq 7m+24j \leq 43008m < 43008n/\log n.
\end{equation}
First we show that we can choose a $(v, c)$-absorbing gadget $A_j$ according to~\ref{constructing-abs:gadget} satisfying~\ref{constructing-abs:gadget-vtx} and~\ref{constructing-abs:gadget-col}.  By assumption, $G$ contains a well-spread collection $\cA_{v, c}$ of $n^2 / 2^{100}$ $(v, c)$-absorbing gadgets.  For each $u \in U'$, let $\cA_u \coloneqq \{A^{\mathrm{gdgt}} \in \cA_{v, c} : u \in V(A^{\mathrm{gdgt}})\}$, and for each $d \in D'$, let $\cA_d \coloneqq \{A^{\mathrm{gdgt}} \in \cA_{v, c}: d \in \phi(A^{\mathrm{gdgt}})\}$.  
Let $\cA'_{v, c} \coloneqq \cA_{v, c}\setminus(\bigcup_{u \in U'\setminus\{v\}}\cA_u \cup \bigcup_{d \in D'\setminus\{c\}} \cA_d)$.  Since $\cA_{v, c}$ is well-spread, $|\cA_u|, |\cA_d| \leq n$ for every $u \in U'\setminus \{v\}$ and $d \in D'\setminus \{d\}$, so by~\eqref{eq:used-vtcs-col-bound}, $|\cA'_{v, c}| \geq |\cA_{v, c}| - 86016 n^2 / \log n > 0$.  In particular, there exists $A_j \in \cA'_{v, c}$.  
By construction of $\cA'_{v, c}$,\COMMENT{We only show~\ref{constructing-abs:gadget-vtx}, as the proof for~\ref{constructing-abs:gadget-col} is similar.  First, if $u \in V(A_j) \cap U'$, then $A_j \in \cA_u$, so $u  = v$.  Thus, $V(A_j) \cap U = \{f_V(a)\}$, as desired, and if $V(A_j) \cap V(A^*_\ell) \neq \emptyset$ for some $\ell < j$, then $V(A_j) \cap V(A^*_\ell) = \{v\}$, as required.  Moreover, by the inductive hypothesis, $V(A^*_\ell) \cap U = \{f_V(a')\}$, where $e_\ell = a'b'$, so $a' = a$.  Thus, $a \in e_j \cap e_\ell \subseteq A$, as desired.}
$A_j$ satisfies~\ref{constructing-abs:gadget-vtx} and~\ref{constructing-abs:gadget-col}, as desired.

Now let~$(y_j, z_j)$ be the abutment vertices of~$A_j$.  By a similar argument, we can choose a $(y_j, z_j)$-bridging gadget $B_j$ according to~\ref{constructing-abs:bridge} satisfying~\ref{constructing-abs:bridge-vtx} and~\ref{constructing-abs:bridge-col}.\COMMENT{Next we show that we can choose a $(y_j, z_j)$-bridging gadget $B_j$ according to~\ref{constructing-abs:bridge} satisfying~\ref{constructing-abs:bridge-vtx} and~\ref{constructing-abs:bridge-col}.  By assumption, $G$ contains a well-spread collection $\cB_{y_j, z_j}$ of at least $n^2 / 10^{50}$ $(y_j, z_j)$-bridging gadgets.  For each $u \in U' \cup V(A_j)$, let $\cB_u \coloneqq \{B^{\mathrm{gdgt}} \in \cB_{y_j, z_j} : u \in V(B^{\mathrm{gdgt}})\}$, and for each $d \in D' \cup \phi(A_j)$, let $\cB_d \coloneqq \{B^{\mathrm{gdgt}} \in \cB_{y_j, z_j} : d \in \phi(B^{\mathrm{gdgt}})\}$.  
Let $\cB'_{y_j, z_j} \coloneqq \cB_{y_j, z_j} \setminus (\bigcup_{u \in U' \cup V(A_j)\setminus\{y_j, z_j\}}\cB_u \cup \bigcup_{d \in D'\cup \phi(A_j)}\cB_d)$.  Since $\cB_{y_j, z_j}$ is well-spread, $|\cB_u|, |\cB_d| \leq n$ for every $u \in U' \cup V(A_j)\setminus\{y_j, z_j\}$ and $d \in D' \cup \phi(B_j)$, so by~\eqref{eq:used-vtcs-col-bound}, $|\cB'_{v, c}| \geq |\cB_{v, c}| - 86018 n^{2} / \log n > 0$.  In particular, there exists $B_j \in \cB'_{y_j, z_j}$.  By construction of $\cB'_{y_j,z_j}$, $B_j$ bridges $A_j$ (as required by~\ref{constructing-abs:bridge}) and satisfies~\ref{constructing-abs:bridge-vtx} and~\ref{constructing-abs:bridge-col}, as desired.}

Finally, we show that we can choose $P_{j,1}, \dots, P_{j,5}$ according to~\ref{constructing-abs:links} satisfying~\ref{constructing-abs:paths-vtx}
-\ref{constructing-abs:link-vtx}.  The argument is again similar, using~\ref{lb:links}, \ref{ub:link-vtx}, and~\ref{ub:link-col} of Proposition~\ref{prop:num-links} instead of the existence of a well-spread collection of gadgets, so we omit the proof.
\COMMENT{Let $t_1, h_1 \in V(A_j)$ and $t_2, t_3, t_4, h_2, h_3, h_4 \in V(B_j)$ be the vertices such that if~$(P_{1},\dots, P_{4})$ are directed paths in~$G$ which complete the pair $(A_{j},B_{j})$, then~$P_{i}$ has tail~$t_{i}$ and head~$h_{i}$.
Moreover, if $j > 1$, then let {$t_5$} be the terminal vertex of $A^*_{j - 1}$, and let {$h_5$} be the initial vertex of $A^*_j$.   
For each $i \in [5]$, by Proposition~\ref{prop:num-links}\ref{lb:links}, there is a set $\cP_i$ of at least $n^2 / 3$ directed rainbow length-three paths in $G$ with head $h_i$ and tail $t_i$.  For each $u \in U' \cup V(A_j \cup B_j)$, let $\cP_u \coloneqq \{P \in \cP_1 : u \in V(P)\}$, and for each $d \in D' \cup \phi(A_j) \cup \phi(B_j)$, let $\cP_d \coloneqq \{P \in \cP_1 : d \in \phi(P)\}$. Let $\cP'_1 \coloneqq \cP_{1} \setminus (\bigcup_{u \in U' \cup V(A_j\cup B_j)\setminus\{h_1 ,t_1\}} \cP_u \cup \bigcup_{d \in D' \cup \phi(A_j\cup B_j)}\cP_d)$.  By Proposition~\ref{prop:num-links}\ref{ub:link-vtx}, $|\cP_u| \leq 2n$ for every $u \in U'\cup V(A_j \cup B_j)\setminus\{h_1, t_1\}$, and by Proposition~\ref{prop:num-links}\ref{ub:link-col}, $|\cP_d| \leq 3n$ for every $d \in D'\cup \phi(A_j\cup B_j)$, so by~\eqref{eq:used-vtcs-col-bound}, $|\cP'_1| \geq |\cP_1| - 215045n^2 / \log n > 0$\COMMENT{$2n(|U'|+10)+3n(|D'|+7)\leq 5n(\max(|U'|,|D'|)+41/5)\leq 5n\cdot43009n/\log n$.}. In particular, there exists $P_{j,1} \in \cP'_1$.  By construction of $\cP'_1$, $P_{j,1}$ satisfies~\ref{constructing-abs:paths-vtx} and~\ref{constructing-abs:paths-col}, is internally vertex-disjoint from $A_j \cup B_j$, and shares no colour with $A_j \cup B_j$.  By repeating this argument successively for each $k \in \{2, 3, 4\}$, we can choose $P_{j,k} \in \cP_k$ to also satisfy~\ref{constructing-abs:paths-vtx} and~\ref{constructing-abs:paths-col} and to be internally vertex-disjoint from and share no colour with $A_j\cup B_j \cup \bigcup_{k' < k}P_{j,k'}$.  Thus, $P_{j,1}, \dots, P_{j,4}$ complete the pair $(A_j, B_j)$, as desired.  Moreover, the same argument allows us to choose $P_{j,5} \in \cP_5$ to satisfy~\ref{constructing-abs:paths-vtx}-\ref{constructing-abs:link-vtx}, as  desired.}

To complete the proof, we show that $H\coloneqq \bigcup_{i = 1}^{|E(T)|}A^*_i \cup \bigcup_{i = 1}^{|E(T)|-1} P_{i + 1, 5}$ is a $T$-absorber rooted on vertices $U$ and colours $D$.  Since~\ref{constructing-abs:gadget-vtx}-\ref{constructing-abs:paths-col} hold for every $j \in [|E(T)|]$, $H$ satisfies~\ref{defn:absorber}\ref{abs-def-gadgets}, and since~\ref{constructing-abs:link-col} and~\ref{constructing-abs:link-vtx} hold for every $j \in [|E(T)|]$, $H$ satisfies~\ref{defn:absorber}\ref{abs-def-links}.  Clearly $H$ is minimal with respect to these properties, so $H$ also satisfies~\ref{defn:absorber}\ref{abs-def-min}.  Thus, $H$ is a $T$-absorber rooted on $U$ and $D$, as desired. 
Moreover, by the choice of $f_V$ and $f_C$, $V'$ and $C'$ are the sets of root vertices and colours of $H$ corresponding to the flexible sets of $T$, as desired.
\end{proof}

\section{Proof of Theorem~\ref{main-thm}}\label{sec:proof}
In this section we use the results we have obtained thus far to prove Theorem~\ref{main-thm}.
We begin by arguing that a `lower-quasirandomness' condition in $G\in\Phi(\dirK)$ (which holds in almost all $G\in\Phi(\dirK)$ by Theorem~\ref{thm:discrepancy}) is enough to ensure the existence of many rainbow directed path forests spanning all but a small arbitrary set of vertices, avoiding a small arbitrary forbidden set of colours, and having few components.
We remark that the method we use to count the rainbow directed path forests is inspired by the method used by Kwan (see the proof of~\cite[Lemma 5.5]{K20}) to count large matchings in random Steiner triple systems.
\begin{defin}\label{def:lowerquas}
Let $G \in \Phi(\dirK)$ with proper $n$-arc-colouring $\phi$, let $V\coloneqq V(G)$, and let $C \coloneqq \phi(G)$.  We say that $G$ is \textit{lower-quasirandom} if for all (not necessarily distinct) sets $U_{1}, U_{2} \subseteq V$ and $D\subseteq C$, we have that $e_{G,D}(U_{1},U_{2})\geq|U_{1}||U_{2}||D|/n-n^{5/3}$.
\end{defin}

\begin{lemma}\label{lemma:pathforests}
Let $G \in \Phi(\dirK)$ with proper $n$-arc-colouring $\phi$, let $V\coloneqq V(G)$, and let $C \coloneqq \phi(G)$.
Let $U\subseteq V$ and let $D \subseteq C$ be equal-sized sets of size at most $n/\log^2 n$.  
If $G$ is lower-quasirandom, then there are at least $\left((1-o(1))n/e^{2}\right)^{n}$ spanning rainbow directed path forests~$Q$ of~$G-U$ such that $\phi(Q)\cap D=\emptyset$ and $Q$ has at most $n^{9/10}$ components.
\end{lemma}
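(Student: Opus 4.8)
The plan is to construct $Q$ greedily, one arc at a time, and then to obtain the count by bounding the number of legal ways to run the construction. Write $m\coloneqq |V(G)\setminus U|=n-|U|$ and set $c^*\coloneqq\lceil 2n^{8/9}\rceil$, noting $c^*<n^{9/10}$ for large $n$. Start with the digraph $Q$ on vertex set $V(G)\setminus U$ with no arcs (so $Q$ has $m$ trivial components), and repeat the following while $Q$ has more than $c^*$ components: add one arc $\myArc{h}{t}$ of $G$ such that $h$ has out-degree $0$ in $Q$, $t$ has in-degree $0$ in $Q$, $h$ and $t$ lie in different components of $Q$, and $\phi(\myArc{h}{t})\notin D\cup\phi(Q)$. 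Adding such an arc keeps $Q$ a rainbow directed path forest of $G-U$ avoiding the colours of $D$ and decreases the number of components by exactly one, so the construction terminates with a spanning rainbow directed path forest $Q$ of $G-U$ with $\phi(Q)\cap D=\emptyset$ and exactly $c^*<n^{9/10}$ components.

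First I would check the construction never gets stuck. Suppose $Q$ currently has $c$ components, where $c^*<c\le m$. Then $Q$ has exactly $c$ vertices of out-degree $0$ (a set $\mathcal H$), exactly $c$ vertices of in-degree $0$ (a set $\mathcal T$), and, since $Q$ has $m-c$ arcs and $|U|=|D|$, the available-colour set $D'\coloneqq[n]\setminus(D\cup\phi(Q))$ has size $n-|D|-(m-c)=c$. Applying lower-quasirandomness with $U_1=\mathcal H$, $U_2=\mathcal T$ and this $D'$, all of size $c$, the number of arcs of $G$ from $\mathcal H$ to $\mathcal T$ with colour in $D'$ is at least $c^3/n-n^{5/3}$; discarding the at most $c$ such arcs whose addition would create a cycle leaves at least $c^3/n-n^{5/3}-c$ valid choices, which is positive (in fact $\ge 6n^{5/3}$) because $c>2n^{8/9}$.

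Next I would count. A run of the construction is a sequence of $m-c^*$ distinct arcs, and conversely every ordering of the arc set of a fixed spanning rainbow directed path forest $Q'$ of $G-U$ with $\phi(Q')\cap D=\emptyset$ and exactly $c^*$ components is a legal run producing $Q'$ (at each step the next arc joins an out-degree-$0$ vertex to an in-degree-$0$ vertex of a different component, and no cycle can appear). Hence the map from runs to outputs is exactly $(m-c^*)!$-to-one, so the number of valid outputs is at least
\[
\frac{1}{(m-c^*)!}\prod_{c=c^*+1}^{m}\Bigl(\frac{c^3}{n}-n^{5/3}-c\Bigr)=\frac{1}{(m-c^*)!}\Bigl(\prod_{c=c^*+1}^{m}\frac{c^3}{n}\Bigr)\prod_{c=c^*+1}^{m}\Bigl(1-\frac{n^{8/3}}{c^3}-\frac{n}{c^2}\Bigr).
\]
Since $c>2n^{8/9}$ throughout, each factor of the last product exceeds $3/4$ and equals $1-o(1)$ for all but $o(n)$ values of $c$, so that product is $e^{-o(n)}$. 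Writing $\prod_{c=c^*+1}^{m}c^3/n=(m!/c^*!)^3/n^{m-c^*}$ and using $c^*=O(n^{8/9})$ together with $|U|=O(n/\log^2 n)$, a routine Stirling computation gives $(m!/c^*!)^3/((m-c^*)!\,n^{m-c^*})=e^{\pm o(n)}(m!)^2/n^m=e^{\pm o(n)}(n/e^2)^n$. Combining these bounds yields at least $((1-o(1))n/e^2)^n$ valid outputs, which all satisfy the conclusion since $c^*<n^{9/10}$.

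The step I expect to be the main obstacle is choosing the right greedy move. The natural idea of extending a single path by one vertex fails outright, because lower-quasirandomness with a one-vertex set $U_1$ is vacuous (the error $n^{5/3}$ dominates the main term $|U_1||U_2||D|/n\le n$), and such a process can be forced to stop after covering only about $n/2$ vertices. The resolution is to add an arbitrary arc between the head set and the tail set: because the head set, the tail set, and the unused-colour set all have size equal to the current number of components $c$, lower-quasirandomness becomes effective precisely as long as $c\gg n^{8/9}$, which is exactly what lets us drive the component count below $n^{9/10}$; it is also here that the hypothesis $|U|=|D|$ is used, to make the number of available colours equal to $c$. Everything else — the Stirling estimate, tracking the $e^{\pm o(n)}$ factors, and the observation that every ordering of $E(Q')$ is a legal run — is routine bookkeeping.
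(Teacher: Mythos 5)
Your proposal is correct and takes essentially the same approach as the paper: build the path forest greedily one arc at a time, at each step applying lower-quasirandomness to the sets of out-degree-$0$ vertices, in-degree-$0$ vertices, and unused colours (all of size equal to the current component count), discard the at most one per component arcs that would close a cycle, lower-bound the number of ordered arc sequences, and divide by a factorial. The differences are cosmetic: you stop at $2n^{8/9}$ components rather than $\lfloor n^{9/10}\rfloor$, you divide by the exact $(m-c^*)!$ where the paper conservatively divides by $n!$, and you evaluate the product via Stirling where the paper uses a Riemann-sum bound on $\sum_j\log(1-(j-1)/n'')$ — but these are equivalent computations of the same quantity.
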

\begin{proof}
  Throughout the proof we implicitly assume $n$ is sufficiently large for certain inequaities to hold.
  We say a rainbow directed path forest in $G$ is \textit{valid} if it has no vertices in $U$ and no colours in $D$.  Let $n ' \coloneqq  n - |U| - \lfloor n^{9/10} \rfloor$, and let $n'' \coloneqq n - |U|$.  If $Q$ is a spanning directed path forest of $G - U$, then the number of components of $Q$ is equal to $|V\setminus U| - |E(Q)|$, so $Q$ has at most $n^{9/10}$ components if and only if it has at least $n'$ arcs.  Thus, it suffices to count the number of valid rainbow directed path forests in $G$ that have $n'$ arcs.  To that end, we first count the number of \textit{ordered} sequences of arcs $(e_1, \dots, e_{n'})$ such that $\bigcup_{i=1}^{n'}e_i$ is a valid rainbow directed path forest in $G$.
  We claim that for every $j \in [n']$, if $e_1, \dots, e_{j - 1}$ are arcs in $G$ such that $\bigcup_{i=1}^{j-1} e_i$ is a valid rainbow directed path forest, then there are at least $\left(1 - o(1)\right)\left(n'' - (j - 1)\right)^3 / n$ choices of an arc $e_j \in E(G)$ such that $\bigcup_{i=1}^j e_j$ is also a valid rainbow directed path forest.  
  Let $V_H$ be the set of vertices $u \in V\setminus U$ such that $u$ has no out-neighbor in $\bigcup_{i = 1}^{j - 1}e_i$, let $V_T$ be the set of vertices $u \in V\setminus U$ such that $u$ has no in-neighbor in $\bigcup_{i = 1}^{j - 1}e_i$, and let $D' \coloneqq (C\setminus D)\setminus \bigcup_{i=1}^{j - 1} \phi(e_i)$. 
  Since $G$ is lower-quasirandom, since $|V_H| = |V_T| = |D'| = n - |U| - (j - 1)$, and since $j \leq n' = n'' - \lfloor n^{9/10}\rfloor$, we have that 
  \begin{equation}\label{eqn:counting-choices-lb}
  e_{G, D'}(V_H, V_T) \geq\COMMENT{$\frac{|V_H||V_T||D'|}{n} - n^{5/3} = $} 
  \frac{(n - |U| - (j - 1))^3}{n} - n^{5/3} \geq \left(1 - o(1)\right)\frac{(n'' - (j - 1))^3}{n}.\COMMENT{In the last inequality we used that $\frac{(n' + \lfloor n^{9/10}\rfloor - (j - 1))^3}{n} \geq \frac{(n^{9/10})^3}{2n} = \omega(n^{5/3})$.}
  \end{equation}
The spanning path forest in $G - U$ with edge set $\{e_1, \dots, e_{j - 1}\}$ has $k \coloneqq n'' - (j - 1)$ components, which we denote $P_1, \dots, P_k$.  For every $i \in [k]$, there is a unique arc $f_i \in E(G)$ (whose head is the tail of $P_i$ and whose tail is the head of $P_i$) such that $P_i\cup f_i$ is a directed cycle.  Let $F \coloneqq \{f_1, \dots, f_k\}$, and let $e_j \in E_{G, D'}(V_H, V_T)\setminus F$.  By the choice of $V_H$, $V_T$, $D'$, and $F$, we have that $\bigcup_{i = 1}^je_i$ is rainbow, has maximum in-degree and out-degree one, and contains no cycle.  Hence, it is a valid rainbow directed path forest, as required.  By~\eqref{eqn:counting-choices-lb}, $|E_{G, D'}(V_H, V_T)\setminus F| \geq \left(1 - o(1)\right)\left(n'' - (j - 1)\right)^3 / n$, so the claim follows.
  
  Therefore, the number of ordered sequences of arcs $e_1, \dots, e_{n'}\in E(G)$ such that $\bigcup_{i = 1}^{n'}e_i$ is a valid rainbow directed path forest is at least
  \begin{equation}\label{eqn:counting-main-lb}
      \prod_{j = 1}^{n'} \left(1 - o(1)\right)\frac{(n'' - (j - 1))^3}{n} = \left(\left(1 - o(1)\right)\frac{(n'')^3}{n}\right)^{n'}\exp\left(3\sum_{j = 1}^{n'}\log\left(1 - \frac{j - 1}{n''}\right)\right).
  \end{equation}
  Since $|U| \leq n / \log^{2} n$, we have $n' \geq n - 2 n / \log^{2} n$ and $(n'')^3 / n \geq (1 - 3 / \log^{2}n)n^2$.  Hence,
  \begin{equation}\label{eqn:counting-error-term}
      \left(\frac{(n'')^3}{n}\right)^{n'}
      \geq \left(\left(1 - \frac{3}{\log^2 n}\right)n^2\right)^{n}n^{-4 n / \log^{2}n} \geq \left(\left(1 - o(1)\right)n^2\right)^{n}.
  \end{equation}
  Since the function $x \mapsto \log(1 - x)$ is negative and monotonically decreasing for $x \in [0, 1)$, 
  \begin{equation}\label{eqn:counting-riemann}
      \sum_{j=1}^{n'}\frac{1}{n''}\log\left(1 - \frac{j - 1}{n''}\right) \geq \int_{0}^{n' / n''}\log(1 - x)\,dx \geq \int_0^1 \log x\, dx =\COMMENT{Using that $\int \log x\, dx = x(\log x - 1)$ and $\lim_{x\rightarrow 0^{+}}x\log x = 0$ by L'H\^{o}pital's Rule} -1.
  \end{equation}
  By substituting~\eqref{eqn:counting-error-term} and~\eqref{eqn:counting-riemann} into the right side of~\eqref{eqn:counting-main-lb}, the expression in~\eqref{eqn:counting-main-lb} is at least
  \begin{equation}\label{eqn:counting-final-lb}
      \left(\left(1 - o(1)\right)n^2\right)^{n}e^{-3n''} \geq \left(\left(1 - o(1)\right)\frac{n^2}{e^3}\right)^{n}.
  \end{equation}
  
  By Stirling's approximation, $n! = (1 + O(1 / n))\sqrt{2\pi n}(n / e)^n \leq ((1 + o(1))n / e)^n$.  Hence, since~\eqref{eqn:counting-final-lb} provides a lower bound on the number of ordered sequences of edges $e_1, \dots, e_{n'}\in E(G)$ such that $\bigcup_{i=1}^{n'}e_i$ is a valid rainbow directed path forest, the total number of valid rainbow directed path forests in $G$ with at most $n^{9/10}$ components is at least
  \begin{equation*}
      \left.\left(\left(1 - o(1)\right)\frac{n^2}{e^3}\right)^{n}\middle/ n!\right. \geq \left(\left(1 - o(1)\right)\frac{n}{e^2}\right)^{n},
  \end{equation*}
  as desired.
\end{proof}
We now have all the tools we need to prove Theorem~\ref{main-thm}.
\lateproof{Theorem~\ref{main-thm}}
Let $G \in \Phi(\dirK)$ with proper $n$-arc-colouring $\phi$, let $V\coloneqq V(G)$, and let $C \coloneqq \phi(G)$.
By Lemma~\ref{fewloops}, Theorem~\ref{thm:discrepancy}, and Lemmas~\ref{masterswitch1} and~\ref{main-switching-lemma}, it suffices to show that if 
\begin{enumerate}[label=(\theequation)]
    \stepcounter{equation}
    \item\label{pf:loops} for every $c \in C$, at most $n / 2$ loops in $G$ are coloured $c$,
    \stepcounter{equation}
    \item\label{pf:quasirandom} $G$ is lower-quasirandom,
    \stepcounter{equation}
    \item\label{pf:abs} $G$ contains a well-spread collection of at least $n^2 / 2^{100}$ $(v, c)$-absorbing gadgets for every $v\in V$ and $c \in C$, and
    \stepcounter{equation}
    \item\label{pf:bridges} $G$ contains a well-spread collection of at least $n^2 / 10^{50}$ $(y, z)$-bridging gadgets for every $y, z \in V$, 
\end{enumerate}
then $G$ contains at least $\left(\left(1 - o(1)\right)n / e^2\right)^n$ rainbow directed Hamilton cycles.

Let $m = \lfloor n / \log^3 n\rfloor$, and let $T$ be a $256$-regular $2RMBG(7m, 2m)$ (which exists by Lemma~\ref{lemma:2rmbg}).  We build a $T$-absorber $H$ in $G$ using Lemma~\ref{lemma:habsorber}, but first we need the following claim to choose the roots of $H$ that will correspond to the flexible sets of $T$.
\begin{claim}\label{claim:flex-sets}
  There exist $V' \subseteq V$ and $C' \subseteq C$ such that $|V'| = |C'| = 2m$ and for every $u, v\in V$ such that $u\neq v$ and for every $c \in C$, there are at least $n^{99/50}$ directed paths $P$ in $G$ such that
  \begin{enumerate}[(i)]
    \item\label{covers:path} $P$ is rainbow and has length four, 
    \item\label{covers:head-tail} $P$ has head $v$ and tail $u$,
    \item\label{covers:abs-col} the second arc of $P$ is coloured $c$,
    \item\label{covers:internal-vtcs} $V(P)\setminus\{u, v\} \subseteq V'$, and
    \item\label{covers:flex-col} $\phi(P)\setminus \{c\} \subseteq C'$.
  \end{enumerate}
\end{claim}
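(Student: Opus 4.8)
The plan is a probabilistic one: I would pick $V'$ and $C'$ by random sampling, handling the exact-size requirement at the end by a monotonicity trick. First sample $V'_0\subseteq V$ by putting each vertex of $V$ into $V'_0$ independently with probability $m/n$, and independently sample $C'_0\subseteq C$ in the same way. For a triple $(u,v,c)$ with $u\neq v$, let $\cP_{u,v,c}$ be the set of directed paths $P$ in $G$ satisfying \ref{covers:path}--\ref{covers:abs-col} (length-four rainbow paths with head $v$, tail $u$, and second arc coloured $c$), and call such a $P$ \emph{good} if moreover $V(P)\setminus\{u,v\}\subseteq V'_0$ and $\phi(P)\setminus\{c\}\subseteq C'_0$; write $Z_{u,v,c}$ for the number of good paths in $\cP_{u,v,c}$. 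Since \ref{pf:loops} holds, Proposition~\ref{prop:num-links}\ref{lb:covers} gives $|\cP_{u,v,c}|\ge n^2/5$. Each $P\in\cP_{u,v,c}$ has three internal vertices, pairwise distinct and different from $u$ and $v$, and (being rainbow) three colours other than $c$, pairwise distinct, so $P$ is good with probability exactly $(m/n)^6$; hence $\Expect{Z_{u,v,c}}\ge\frac{n^2}{5}(m/n)^6=\Theta(n^2/\log^{18}n)$.

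The second step is concentration. Regard $Z_{u,v,c}$ as a function of the $2n$ independent indicators $\mathbf 1[w\in V'_0]$ $(w\in V)$ and $\mathbf 1[d\in C'_0]$ $(d\in C)$. The indicator of $c$ does not affect $Z_{u,v,c}$, since $c$ never lies in $\phi(P)\setminus\{c\}$; flipping a vertex indicator changes $Z_{u,v,c}$ by at most $3n$ by Proposition~\ref{prop:num-links}\ref{ub:cover-vtx}; and flipping a colour indicator $d\neq c$ changes it by at most $3n$ by Proposition~\ref{prop:num-links}\ref{ub:cover-col}. McDiarmid's inequality (Theorem~\ref{mcd}) with these Lipschitz constants and deviation $\Expect{Z_{u,v,c}}/2$ then yields $\Prob{Z_{u,v,c}<\Expect{Z_{u,v,c}}/2}\le\exp\!\big(-\Omega\big((\Expect{Z_{u,v,c}})^2/n^3\big)\big)=\exp(-\Omega(n/\log^{36}n))=o(n^{-3})$. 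A union bound over the fewer than $n^3$ triples $(u,v,c)$ with $u\neq v$ shows that, with probability $1-o(1)$, every $Z_{u,v,c}$ is at least $\Expect{Z_{u,v,c}}/2=\Theta(n^2/\log^{18}n)\ge n^{99/50}$ for $n$ large, because $n^{1/50}$ eventually exceeds any fixed power of $\log n$. Independently, $\Expect{|V'_0|}=\Expect{|C'_0|}=m$, so a Chernoff bound gives $|V'_0|\le 2m$ and $|C'_0|\le 2m$ with probability $1-e^{-\Omega(m)}=1-o(1)$.

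Finally I would combine these: there is an outcome with $|V'_0|,|C'_0|\le 2m$ and $Z_{u,v,c}\ge n^{99/50}$ for every admissible $(u,v,c)$. Fixing such $V'_0$ and $C'_0$, and using $|V|=|C|=n\ge 2m$, extend them arbitrarily to $V'\supseteq V'_0$ and $C'\supseteq C'_0$ with $|V'|=|C'|=2m$. Enlarging the sampled sets preserves goodness of a path — one with $V(P)\setminus\{u,v\}\subseteq V'_0\subseteq V'$ and $\phi(P)\setminus\{c\}\subseteq C'_0\subseteq C'$ still satisfies \ref{covers:internal-vtcs} and \ref{covers:flex-col} — so for every $u\neq v$ and every $c$ there remain at least $n^{99/50}$ directed paths satisfying \ref{covers:path}--\ref{covers:flex-col}, as required. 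The only genuine subtlety is reconciling the exact-size constraint $|V'|=|C'|=2m$ with the random construction, which is exactly what the "sample slightly sparse, extend, and use monotonicity" step handles; the remainder is a routine first-moment-and-concentration estimate, for which one need only check that the expected count $\Theta(n^2/\log^{18}n)$ comfortably dominates both the target $n^{99/50}$ and the McDiarmid error term of order $n^{3/2}\sqrt{\log n}$.
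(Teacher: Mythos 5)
Your proof is correct and takes essentially the paper's route: sample $V'$ and $C'$ randomly, use Proposition~\ref{prop:num-links}\ref{lb:covers} for the first moment, Proposition~\ref{prop:num-links}\ref{ub:cover-vtx} and~\ref{ub:cover-col} as Lipschitz constants in McDiarmid's inequality, and a union bound over the $O(n^3)$ triples $(u,v,c)$. The one small but genuine divergence is how the exact-size requirement $|V'|=|C'|=2m$ is reconciled with the random sampling: you \emph{undersample} at rate $m/n$, use Chernoff to get $|V'_0|,|C'_0|\le 2m$, and then \emph{extend} arbitrarily to size exactly $2m$, at which point monotonicity of the good-path count in the sampled sets finishes the job with nothing further to check. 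The paper instead \emph{oversamples} at rate $p=(2m+n^{9/10})/n$, applies Chernoff to get $|U'|,|D'|\ge 2m$ (and $\le 2m+2n^{9/10}$), \emph{truncates} down to size exactly $2m$, and must then invoke Proposition~\ref{prop:num-links}\ref{ub:cover-vtx} and~\ref{ub:cover-col} a second time to bound the number of paths destroyed by removing the $\le 2n^{9/10}$ surplus elements. Your undersample-and-extend variant is marginally cleaner, saving one application of the upper-bound estimates, though either device handles the exact-size constraint.
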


\claimproof{}
  Let $p \coloneqq (2m + n^{9/10}) / n$, and let $U' \subseteq V$ and $D' \subseteq C$ be chosen randomly by including every $v \in V$ in $U'$ and every $c \in C$ in $D'$ independently with probability $p$.  We claim that the following holds with high probability:
  \begin{enumerate}[(a)]
      \item\label{claim:flex-set-size} $|U'|, |D'| = pn \pm n^{4/5}$, and
      \item\label{claim:covers-survive} for every $u, v\in V$ such that $u\neq v$ and for every $c\in C$, there are at least $p^6 n^2 / 10$ directed paths $P$ in $G$ satisfying~\ref{covers:path}-\ref{covers:abs-col}, such that $V(P)\setminus\{u, v\}\subseteq U'$ and $\phi(P)\setminus\{c\}\subseteq D'$. 
  \end{enumerate}
  Indeed,~\ref{claim:flex-set-size} follows from a standard application of the Chernoff Bound.\COMMENT{Applying the Chernoff Bound with $t \coloneqq n^{4/5}$, we have
  \begin{equation*}
      \Prob{||U'| - np| > n^{4/5}} \leq 2\exp\left(-\frac{n^{8/5}}{3np}\right) = \exp\left(-\Omega(n^{3/5}\log^3 n)\right),
  \end{equation*}
  and the same holds for $|D'|$.}
  To prove~\ref{claim:covers-survive}, we use McDiarmid's Inequality.  To that end, fix $u,v\in V$ distinct and $c\in C$. We let~$f$ denote the random variable counting the number of paths satisfying~\ref{claim:covers-survive}.  Note that $f$ is determined by the independent binomial random variables $\{X_w : w \in V\} \cup \{X_d : d \in C\}$, where $X_w$ indicates if $w \in U'$ and $X_d$ indicates if $d \in D'$.  By~\ref{pf:loops} and Proposition~\ref{prop:num-links}\ref{lb:covers}, there are at least $n^2 / 5$ paths satisfying~\ref{covers:path}, \ref{covers:head-tail}, and~\ref{covers:abs-col}, and each such path satisfies $V(P)\setminus\{u, v\}\subseteq U'$ with probability $p^3$ and $\phi(P)\setminus\{c\}\subseteq D'$ with probability $p^3$, independently.  Hence, $\Expect{f} \geq p^6 n^2 / 5$.  For each $w \in V$, by Proposition~\ref{prop:num-links}\ref{ub:cover-vtx}, $X_w$ affects $f$ by at most $3n$, and for each $d \in C$, by Proposition~\ref{prop:num-links}\ref{ub:cover-col}, $X_d$ affects $f$ by at most $3n$.  Therefore by McDiarmid's Inequality (Theorem~\ref{mcd}) applied with $t \coloneqq \Expect{f} / 2$, there are at least $p^6 n^2 / 10$ paths satisfying~\ref{claim:covers-survive} with probability at least $1 - \exp(-\Omega(p^{12} n))$.  Hence, by a union bound,~\ref{claim:covers-survive} holds for every $u,v\in V$ with $u \neq v$ and every $c\in C$ with high probability, as claimed.
  
  Now we fix a choice of $U'$ and $D'$ satisfying both~\ref{claim:flex-set-size} and~\ref{claim:covers-survive} simultaneously.  By~\ref{claim:flex-set-size}, $|U'|, |D'| \geq 2m$, so there exists $V' \subseteq U'$ and $C' \subseteq D'$ such that $|V'| = |C'| = 2m$, as required.  Moreover, by~\ref{claim:flex-set-size}, $|U' \setminus V'|, |D' \setminus C'| \leq 2n^{9/10}$.  Thus, by Proposition~\ref{prop:num-links}\ref{ub:cover-vtx} and \ref{prop:num-links}\ref{ub:cover-col},~\ref{claim:covers-survive} implies that {for every $u,v\in V$ distinct and $c\in C$,} there are at least $p^6 n^2 / 10 - 2(2n^{9/10})(3n) \geq n^{99/50}$ directed paths satisfying~\ref{covers:path}-\ref{covers:flex-col}, as desired.
\endclaimproof{}

Now let $U \subseteq V$ and $D \subseteq C$ such that $|U| = |D| = 7m$, $V' \subseteq U$, and $C' \subseteq D$.  By Lemma~\ref{lemma:habsorber},~\ref{pf:abs}, and~\ref{pf:bridges}, there is a $T$-absorber $H$ in $G$ rooted on~$U$ and~$D$ such that $V'$ and $C'$ are the sets of root vertices and colours of $H$ corresponding to the flexible sets of $T$. 
By Lemma~\ref{prop:rrH}, $H$ is robustly rainbow-Hamiltonian with respect to flexible sets $V'$ and $C'$ and initial and terminal vertices $t$ and $h$, where $t$ is the initial vertex of $H$ and $h$ is the terminal vertex of $H$.  Note that
\begin{equation}\label{eqn:size-of-absorber}
    |V(H)| = 22|E(T)| - 2 + |U| \text{ and } |\phi(H)| = 22|E(T)| - 3 + |D|.
\end{equation}

\begin{claim}\label{claim:absorb-path-forest}
If $Q$ is a spanning rainbow path forest in $G - V(H)$, sharing no colour with $H$, with at most $n^{9/10}$ components, then $G$ has a rainbow directed Hamilton cycle~$F$ such that $F - V(H) = Q$.
\end{claim}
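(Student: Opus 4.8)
The plan is to link together the path forest $Q$ and the absorbing structure $H$ into a single rainbow directed path spanning $V(G)$, and then apply the robust rainbow-Hamiltonicity of $H$ to absorb the leftover flexible vertices and colours, thereby closing up a Hamilton cycle. First I would set up bookkeeping for what still needs to be covered: let $Q$ have components $R_1, \dots, R_p$ with $p \le n^{9/10}$, and note that $Q$ spans $V(G)\setminus V(H)$ and $\phi(Q)\cap\phi(H)=\emptyset$. The colours of $C\setminus(\phi(Q)\cup\phi(H))$ — those not yet used by $Q$ and not in $H$ — must still be placed on the Hamilton cycle; by~\eqref{eqn:size-of-absorber} and the fact that $Q$ has at least $n - |V(H)| - n^{9/10}$ arcs, there are only $O(n^{9/10} + |E(T)|) = O(n/\log^3 n)$ such colours. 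The idea is to use the short rainbow directed paths guaranteed by Claim~\ref{claim:flex-sets} to connect the ``ends'' of the components $R_1, \dots, R_p$, the terminal vertex $h$ of $H$, and the initial vertex $t$ of $H$, into one long rainbow directed path $P$ with tail $h$ and head $t$, in such a way that $V(P)\setminus V(H)\supseteq V(G)\setminus V(H)$, $V(P)\cap V(H)$ is a subset of $V'$ of size at most $|V'|/2 = m$, $\phi(P)$ covers every colour in $C\setminus\phi(H)$, and $\phi(P)\cap\phi(H)$ is a subset of $C'$ of size at most $|C'|/2 = m$.

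Next I would carry out this linking greedily. Order the objects to be connected as $h = a_0, R_1, R_2, \dots, R_p, t = a_{p+1}$ (choosing for each $R_i$ its head as entry and tail as exit). We process the $p+1$ gaps one at a time. For a typical gap we need a rainbow length-three path joining the exit of one object to the entry of the next, using only vertices of $V'$ internally and only colours of $C'$, and avoiding all vertices and colours already committed; Claim~\ref{claim:flex-sets}\ref{covers:path}--\ref{covers:flex-col} (with $c$ chosen freely, or more precisely the length-four version restricted appropriately) provides at least $n^{99/50}$ candidate paths for each gap, and since at each step the number of forbidden vertices and colours is $O(n/\log^3 n)$ and each forbidden vertex or colour kills at most $O(n)$ of the candidate paths (by the same counting as in Proposition~\ref{prop:num-links}\ref{ub:cover-vtx}--\ref{ub:cover-col}), a valid choice remains. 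Crucially, at a small number of the gaps — at most $|C\setminus(\phi(Q)\cup\phi(H))| = O(n/\log^3 n)$ of them — I instead use a length-four path whose second arc carries one of the not-yet-used colours (this is exactly the shape Claim~\ref{claim:flex-sets} is designed to deliver, via Proposition~\ref{prop:num-links}\ref{lb:covers}); distributing the missing colours across distinct gaps (there are $p+1 \ge n^{9/10}$ gaps, more than enough) lets each such colour be absorbed by one link. Since each link uses at most three internal vertices of $V'$ and at most four colours of $C'$, the total usage from $V'$ and $C'$ is $O(n/\log^3 n) \ll m$, so $|V(P)\cap V'| \le m$ and $|\phi(P)\cap C'|\le m$ as required. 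One also checks $P$ is genuinely a directed path (no cycle closes prematurely) because each new link has its own fresh internal vertices and we never reuse an endpoint.

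Finally, set $X \coloneqq V(P)\cap V(H)$ and $Y \coloneqq \phi(P)\cap\phi(H)$; by construction $X\subseteq V'$, $Y\subseteq C'$, and $|X| = |Y| \le m = \min\{|V'|/2,|C'|/2\}$ (if $|X|\neq|Y|$ we can pad the smaller of the two using spare vertices/colours of $V', C'$, or simply observe the robust-Hamiltonicity definition only needs $|X|,|Y|$ both at most that bound after symmetrising — I would phrase the linking so that $|X|=|Y|$ exactly, which is easy to arrange). Since $H$ is robustly rainbow-Hamiltonian with flexible sets $V', C'$ and initial/terminal vertices $t, h$, the graph $H - X$ contains a rainbow directed Hamilton path $P'$ with tail $t$ and head $h$ avoiding every colour in $Y$. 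Then $F \coloneqq P\cup P'$ is a directed cycle: $P$ goes from $h$ to $t$, $P'$ from $t$ to $h$, they meet only at $\{t,h\}$ (their interiors are disjoint since $V(P)\cap V(H) = X$ and $P'$ lives in $H - X$), and $V(F) = V(P)\cup V(P') = (V(G)\setminus V(H))\cup (V(H)\setminus X)\cup X = V(G)$, so $F$ is Hamilton. It is rainbow because $\phi(P)\cap\phi(P') \subseteq \phi(P)\cap\phi(H) = Y$ while $\phi(P')\cap Y = \emptyset$, and each of $P, P'$ is individually rainbow. Lastly $F - V(H) = P - V(H) = Q$ (the links contributing to $P$ have their non-$V(H)$ vertices inside the $R_i$'s or are internal flexible vertices that lie in $V(H)$, so deleting $V(H)$ leaves precisely the arcs of $Q$) — here I would be slightly careful that the length-four covering links do not add stray arcs outside $V(H)$; arranging all link-internal vertices to lie in $V'\subseteq V(H)$ handles this, which is why Claim~\ref{claim:flex-sets} insists $V(P)\setminus\{u,v\}\subseteq V'$. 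The main obstacle is the third paragraph's bookkeeping: ensuring simultaneously that \emph{every} leftover colour gets absorbed by some link, that the links stay within the flexible budget $m$, and that $F - V(H)$ is exactly $Q$ with no extra arcs — this forces the links' internal vertices to lie in $V(H)$, which is precisely the role of the flexible set $V'$ and the shape of the paths in Claim~\ref{claim:flex-sets}.
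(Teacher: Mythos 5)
Your overall strategy matches the paper's: link the components of $Q$ and the endpoints $h, t$ of $H$ by short rainbow paths with internal vertices in $V'$ and extra colours in $C'$ (via Claim~\ref{claim:flex-sets}), then apply robust rainbow-Hamiltonicity of $H$ to absorb the used-up flexible vertices and colours and close up the cycle.  However, there is a concrete gap in your bookkeeping that would make the construction fail as written.

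You treat the number of colours in $C\setminus(\phi(Q)\cup\phi(H))$ as ``only $O(n^{9/10}+|E(T)|)=O(n/\log^{3}n)$'' and propose to absorb them at ``a small number of the gaps'' while filling the remaining gaps with length-three paths whose colours all lie in $C'$.  This cannot work: a rainbow directed Hamilton cycle has exactly $n$ arcs and hence must use \emph{every} colour, so each missing colour must appear on some linking path. The precise count, using~\eqref{eqn:size-of-absorber} (which gives $|V(H)|-|\phi(H)|=1$) together with $|\phi(Q)|=n-|V(H)|-p$, shows that $|C\setminus(\phi(Q)\cup\phi(H))|=p+1$ exactly, i.e.\ one missing colour per gap.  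Your estimate $O(n/\log^{3}n)$ is not only loose, it exceeds the number of gaps $p+1\leq n^{9/10}+1$ (note $n/\log^{3}n\gg n^{9/10}$), and indeed the parenthetical ``there are $p+1\geq n^{9/10}$ gaps, more than enough'' directly contradicts your earlier bound $p\leq n^{9/10}$.  Without the exact identity $k'=p+1$ there is no guarantee that the missing colours can be distributed among the gaps, and one can also check by counting arcs and available colours of $\phi(H)\setminus Y$ that using even a single length-three link forces $P^*_2$ to have fewer colours available than arcs, so it cannot be rainbow.  The paper resolves this by using a length-four path at \emph{every} gap, each one carrying exactly one of the $p+1$ missing colours as its second arc; this also means you never need length-three linking paths at all, which is convenient since Claim~\ref{claim:flex-sets} is stated only for length-four paths (and I do not see how you would obtain the analogous supply of length-three paths with internal vertices in $V'$ and all colours in $C'$).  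Two smaller issues: ``choosing for each $R_i$ its head as entry and tail as exit'' is reversed — you enter a directed path at its tail and exit at its head — and ``total usage from $V'$ and $C'$ is $O(n/\log^{3}n)\ll m$'' is false on its face since $m=\Theta(n/\log^{3}n)$; the bound you actually want, $|X|,|Y|\leq 3(p+1)=O(n^{9/10})\leq m$, follows once you have the correct count of gaps.
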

\claimproof{}
Let $P_1, \dots, P_k$ be the components of $Q$, and for each $i \in [k]$, let $h_i$ be the head of $P_i$, and let $t_i$ be the tail of $P_i$.  Let $t_{k + 1}$ denote the {initial vertex} of $H$, and let $h_0$ denote the {terminal vertex} of $H$.  That is, $t_{k + 1} \coloneqq t$ and $h_0 \coloneqq h$.  Let $c_1, \dots, c_{k'}$ be an enumeration of the colours in $C \setminus (\phi(H)\cup \phi(Q))$.  Since $Q$ is rainbow, $|\phi(Q)| = |E(Q)| = |V\setminus V(H)| - k$, so by~\eqref{eqn:size-of-absorber}, $k' = \COMMENT{$n - |\phi(H)| - |\phi(Q)| = n - (|V(H)| - 1) - (n - |V(H)| - k) = k + 1 = $} k + 1$.  

By Claim~\ref{claim:flex-sets}, for each $j \in [k + 1]$, there is a collection $\cP_j$ of at least $n^{99/50}$ directed paths satisfying~\ref{covers:path}-\ref{covers:flex-col} where $u = h_{j-1}$, $v = t_{j}$, and $c = c_j$.  For each $j \in [k + 1]$, we inductively choose a path $P'_j \in \cP_j$ such that
\begin{enumerate}[(a)$_j$]
    \item\label{constructing-cov:vtx} $\phi(P'_j) \cap \phi(P'_\ell) = \emptyset$ for every $\ell < j$ and
    \item\label{constructing-cov:col} $V(P'_j) \cap V(P'_\ell) \cap V' = \emptyset$ for every $\ell < j$.
\end{enumerate}
To that end, we let $i \in [k + 1]$ and assume $P'_j\in\cP_j$ has been chosen to satisfy~\ref{constructing-cov:vtx} and~\ref{constructing-cov:col}
for each $j < i$, and we show that we can indeed choose such a $P'_j$ for $j = i$.  Let $U_j \coloneqq V' \cap \bigcup_{\ell = 1}^{j - 1}V(P'_\ell)$, and let $D_j \coloneqq D' \cap \bigcup_{\ell = 1}^{j - 1}\phi(P'_\ell)$.  For each $u \in U_j$, let $\cP_u \coloneqq \{P \in \cP_j : u \in V(P)\}$, and for each $d \in D_j$, let $\cP_d \coloneqq \{P \in \cP_j : d \in \phi(P)\}$.  Let $\cP'_j \coloneqq \cP_{j}\setminus \left(\bigcup_{u \in U_j}\cP_u\cup\bigcup_{d \in D_j}\cP_d\right)$.  By Proposition~\ref{prop:num-links}\ref{ub:cover-vtx}, $|\cP_u| \leq 3n$ for each $u \in U_{j}$, and by Proposition~\ref{prop:num-links}\ref{ub:cover-col}, $|\cP_d| \leq 3n$ for each $d \in D_{j}$.  Since $P'_\ell$ has length four for each $\ell < j$, we have $|U_j|, |D_j| \leq 3k$.  Hence, $|\cP'_j| \geq n^{99/50} - (6k)(3n) > 0$.  In particular, there exists $P'_j \in \cP'_j$.  By construction of $\cP'_j$, $P'_j$ satisfies~\ref{constructing-cov:vtx} and~\ref{constructing-cov:col}, as desired.

Since $Q$ shares no vertices or colours with $H$, for each $j \in [k + 1]$, since the internal vertices of $P'_j$ are in $V' \subseteq V(H)$ and since $\phi(P'_j) \setminus D' = \{c_j\}$, it follows that $P'_j \cup P_j$ is a rainbow directed path with tail $h_{j - 1}$ and head $h_j$. Moreover, by induction, using~\ref{constructing-cov:vtx} and~\ref{constructing-cov:col}, if $j \leq k$, then $\bigcup_{\ell = 1}^j (P'_\ell \cup P_\ell)$ is a rainbow directed path with tail $h_0$ and head $h_j$, and in particular, $P^*_1 \coloneqq P'_{k + 1} \cup \bigcup_{\ell=1}^{k}(P'_\ell \cup P_\ell)$ is a rainbow directed path with tail $h_0$ and head $t_{k + 1}$.  

Let $X \coloneqq V' \cap \bigcup_{j = 1}^{k + 1}V(P'_j)$, and let $Y \coloneqq D' \cap \bigcup_{j = 1}^{k + 1}\phi(P'_j)$.  Note that $|X|, |Y| \leq 3(k + 1) \leq m$.  Therefore, since $H$ is robustly rainbow-Hamiltonian, there exists a rainbow directed Hamilton path $P^*_2$ in $H - X$ with tail $t_{k + 1}$ and head $h_0$, not containing a colour in $Y$.  Now $F \coloneqq P^*_1 \cup P^*_2$ is a rainbow directed Hamilton cycle in $G$, and $F - V(H) = Q$, as desired.
\endclaimproof{}
By Lemma~\ref{lemma:pathforests}\COMMENT{applied with $U = V(H)$ and $D = \phi(H) \cup \{c\}$ for some arbitrary $c \in C \setminus \phi(H)$} and~\eqref{eqn:size-of-absorber}, there is a collection~$\cQ$ of at least $\left((1 - o(1))n / e^2\right)^n$ spanning rainbow directed path forests in~$G-V(H)$ that share no colours with $\phi(H)$ and have at most $n^{9/10}$ components.  By Claim~\ref{claim:absorb-path-forest}, for every $Q \in \cQ$, there is a rainbow directed Hamilton cycle $F_Q$ such that $F - V(H) = Q$.  Therefore $\{F_Q : Q \in \cQ\}$ is a collection of at least $\left((1 - o(1))n / e^2\right)^n$ distinct rainbow directed Hamilton cycles in $G$, as desired.
%
\endproof
\section*{Acknowledgements}
We are grateful to both anonymous referees for their helpful comments.
In particular, we thank one of the referees for suggesting a simplification to the proof of Lemma~\ref{fewloops}.

\bibliographystyle{amsabbrv}
\bibliography{References}
\APPENDIX{

\appendix
\renewcommand\sectionname{}
\renewcommand\appendixname{A}
\renewcommand{\thesection}{\Alph{section}}
\section{Proofs of Lemmas~\ref{quasi-lemma} and~\ref{masterswitch1}}
In this section we make clear the changes one needs to make to the arguments of~\cite{GKKO20} to obtain Lemmas~\ref{quasi-lemma} and~\ref{masterswitch1}.  Lemma~\ref{masterswitch1} is analogous to~\cite[Lemma 3.8]{GKKO20} (minus the notion of `edge-resilience'), and Lemma~\ref{quasi-lemma} is a direct analogue of~\cite[Lemma 6.3]{GKKO20}.  The proof of Lemma~\ref{quasi-lemma} can be obtained via a straightforward modification of the proof of~\cite[Lemma 6.3]{GKKO20}, and we describe this first.  Then, we show how to adapt the proof of Lemma~\ref{masterswitch1} from the proof of~\cite[Lemma 3.8]{GKKO20}, which can be be summarized as follows:
\begin{itemize}
    \item the main ingredients in the proof of~\cite[Lemma 3.8]{GKKO20} are~\cite[Lemmas 6.3, 6.8, and 6.9]{GKKO20}, and we will need an analogue of each of these;
    \item as mentioned, Lemma~\ref{quasi-lemma} is the analogue of~\cite[Lemma 6.3]{GKKO20}, and the proof adapts easily to this setting;
    \item we will need the obvious directed analogues of~\cite[Definitions 6.4--6.7]{GKKO20} used in~\cite[Lemmas 6.8 and 6.9]{GKKO20};
    \item we will need an analogue of~\cite[Lemma 6.8]{GKKO20} (namely Lemma~\ref{lemma:switch1}), which we break down into three claims, of which only the second does not quickly follow as an analogue of claims in the proof of~\cite[Lemma 6.8]{GKKO20};
    \item the proof of~\cite[Lemma 6.9]{GKKO20} adapts easily to this setting;
    \item the proof of~\cite[Lemma 3.8]{GKKO20} from~\cite[Lemmas 6.3, 6.8, and 6.9]{GKKO20} is essentially the same (see Lemma~\ref{lemma:gearup}), with an extra step to deal with the conditioning on the number of loops in each colour class.
\end{itemize}
To prove Lemma~\ref{quasi-lemma}, the main idea is that we can perform a switching operation very similar to that in the proof of~\cite[Lemma 6.3]{GKKO20}, whilst avoiding any arcs coloured~$c$, thus leaving the colour class of~$c$ unchanged.
\lateproof{Lemma~\ref{quasi-lemma}}
Modify the proof of~\cite[Lemma 6.3]{GKKO20} by fixing an outcome~$F$ of~$\mathbf{F}_{c}$ and analyzing the following `rotate' switching operation on the set of~$H\in\cG_{D}$ whose $c$-colour class is~$F$.
For fixed $A,B\subseteq[n]$ satisfying $|A|=|B|=|D|=n/10^{6}$, we instead say that a `rotation system' of~$H$ is a subdigraph $R\subseteq G$ together with a labelling of its vertices $V(R)=\{a,b,v,w\}$ such that $E(R)=\{ab,vw\}$ where $a\in A$, $b\in B$, $v\notin A$, $w\notin B$, $aw,vb\notin E(H)$, and $\phi_{H}(ab)=\phi_{H}(vw)\neq c$.
Then the `rotate' switching operation replaces the arcs~$ab$ and~$vw$ with the arcs~$aw$ and~$vb$, each in colour~$\phi_{H}(ab)\neq c$, thus leaving the $c$-colour class,~$F$, unchanged.
Analyzing auxiliary bipartite graphs~$B_{s}$ as in the proof of~\cite[Lemma 6.1]{GKKO20} and defining~$\delta_{s}$ and~$\Delta_{s-1}$ to be the analogous quantities, where an edge captures a rotation operation destroying an arc from~$A$ to~$B$ in some $H\in M_{s}$, it is simple to see that $\Delta_{s-1}\leq|D|^{3}$, and $\delta_{s}\geq (n-|A|-|B|-2|D|)(s-|A|)=\frac{999996}{10^{6}}n(s-|D|)$  (here the `$-|D|$' term occurs due to avoiding any arc of colour~$c$ to be the one that the rotation switching operation destroys between~$A$ and~$B$). Then for $s\geq \frac{3|D|^{3}}{2n}$ and~$n$ sufficiently large we obtain that $|M_{s}|/|M_{s-1}|\leq 9/10$. Proceeding as in the end of the proof of~\cite[Lemma 6.1]{GKKO20}, the result follows.
\endproof
The rest of the appendix is dedicated to the proof of Lemma~\ref{masterswitch1}, which we split into two lemmas (Lemmas~\ref{lemma:switch1} and~\ref{lemma:gearup}), which roughly speaking correspond to~\cite[Lemmas 6.8 and 6.9]{GKKO20} and the proof of~\cite[Lemma 3.8]{GKKO20}, respectively.
We begin by defining six events (in addition to~$\cE$ and~$\cC$ defined in the lemma statement and~$\cQ_{D}^{1}$ defined in Definition~\ref{def:quas}) that we will use, as well as giving some extra notation.
For fixed $c\in[n]$, we define~$\cE^{c}$ to be the event in the probability space~$\cS$ corresponding to uniformly random choice of $\mathbf{G}\in\Phi(\dirK)$, that~$\mathbf{G}$ contains a well-spread collection of~$n^{2}/2^{100}$ $(v,c)$-absorbing gadgets, for all $v\in[n]$.
We define~$\cC^{c}$ to be the event in~$\cS$ that there are at most~$n/10^{9}$ $c$-loops in~$\mathbf{G}$, and for fixed $D\subseteq[n]$ we define~$\left(\cE^{c}|_{D}\right)$ to be the event in~$\cS$ that~$\mathbf{G}|_{D}$ contains a well-spread collection of~$n^{2}/2^{100}$ $(v,c)$-absorbing gadgets, for all $v\in[n]$.

For fixed $D\subseteq[n]$ and~$c\in D$ we define~$\cE_{D}^{c}$ to be the event in the probability space~$\cS_{D}$ corresponding to uniformly random choice of $\mathbf{H}\in\cG_{D}$, that~$\mathbf{H}$ contains a well-spread collection of~$n^{2}/2^{100}$ $(v,c)$-absorbing gadgets, for all $v\in[n]$, and we define~$\cC_{D}^{c}$ to be the event in~$\cS_{D}$ that there are at most~$n/10^{9}$ $c$-loops in~$\mathbf{H}$.
Finally, we define the event~$\widetilde{\cQ}_{D}$ in~$\cS_{D}$ in an analogous way to the definition of~$\widetilde{\cQ}_{D^{*}}^{\text{col}}$ in~\cite{GKKO20} (see the text preceding Definition 6.7 in the cited paper).
Indeed, for an equitable partition~$(D_{i})_{i=1}^{4}$ of~$D$ we define `$(v,c,\cP)$-gadgets', `distinguishability' of $(v,c,\cP)$-gadgets, `saturation' of $c$-arcs, and the function $r_{(v,c,\cP)}\colon \cG_{D}\rightarrow [n|D|]_{0}$ in ways corresponding to~\cite[Definitions 6.4--6.6]{GKKO20}. (Just add the necessary directions as per Definition~\ref{def:absgadg} of the current paper, orienting the $d_{4}$-arc (with $d_{4}\in D_{4}$) of a $(v,c,\cP)$-gadget away from~$v$ (say).)
Then~$\widetilde{\cQ}_{D}$ is the set of $H\in\cG_{D}$ such that if $r_{(v,c,\cP)}(H)=s$, then $e_{H}(A,B)\leq 2|D|^{3}/n +6s$ for all $A,B\subseteq[n]$ such that $|A|=|B|=|D|$.~$\widetilde{\cQ}_{D}$ is just a reformulation of upper-quasirandomness which is closed under the switching operation we use to find $(v,c,\cP)$-gadgets.
The following lemma plays a role analogous to~\cite[Lemmas 6.8 and 6.9]{GKKO20}.
First, we discuss the switching operation that we will use in the proof.
Suppose $D\subseteq[n]$, $v,c\in[n]$, let $H\in\cG_{D}$, and fix a partition $\cP=(D_{i})_{i=1}^{4}$ of~$D$.
Let $u_{1},u_{2},\dots,u_{14}\in[n]\setminus\{v\}$, where $u_{1},u_{2},u_{5},u_{6},u_{7},u_{8},u_{9},u_{10},u_{13},u_{14}$ are distinct and $\{u_{1},u_{2},u_{5},u_{6},u_{7},u_{8},u_{9},u_{10},u_{13},u_{14}\}\cap\{u_{3},u_{4},u_{11},u_{12}\}=\emptyset$.
Then we say that a subgraph $T\subseteq H[\{v,u_{1},u_{2},\dots,u_{14}\}]$ is a \textit{twist system} of~$H$ if~$T$ satisfies~\cite[Definition 6.7(i)--(vii)]{GKKO20} (with directions added as discussed above), and we define the switching operation~`$\text{twist}_{T}$' and the `canonical $(v,c,\cP)$-gadget of the twist' analogously to~\cite[Definition 6.7]{GKKO20}.
We use the notation~$\pr_{D}$ for the measure of the probability space~$\cS_{D}$.
\begin{lemma}\label{lemma:switch1}
Suppose $D\subseteq[n]$ has size $|D|=n/10^{6}$, and fix $c\in D$.
Then
\[
\probd{\cE_{D}^{c}\,\big|\,\widetilde{\cQ}_{D}\cap\cC_{D}^{c}}\geq 1-\exp\left(-\Omega(n^{2})\right).
\]
\end{lemma}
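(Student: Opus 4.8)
\textbf{Proof proposal for Lemma~\ref{lemma:switch1}.} The plan is to follow the template of Lemma~\ref{masterswitch2} closely, running a switching argument in the probability space $\cS_D$ of a uniformly random $\mathbf{H}\in\cG_D$ conditioned on the two `good' events $\widetilde{\cQ}_D$ (a closed-under-switching reformulation of upper-quasirandomness) and $\cC^c_D$ (few $c$-loops). Fix an equitable partition $\cP=(D_i)_{i=1}^4$ of $D$. Recall that $\widetilde{\cQ}_D\cap\cC^c_D$ is partitioned by the sets $M_s$ of $H$ with $r_{(v,c,\cP)}(H)=s$, few $c$-loops, and $e_H(A,B)\le 2|D|^3/n+6s$ for all $A,B$ of size $|D|$. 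For each $s$ we form the auxiliary bipartite digraph $B_s$ on $(M_s,M_{s+1})$ with an arc $HH'$ whenever $H$ contains a twist system $T$ (in the sense defined above, following~\cite[Definition 6.7]{GKKO20} with directions added) whose canonical $(v,c,\cP)$-gadget is distinguishable in $\mathrm{twist}_T(H)=H'\in M_{s+1}$. Setting $\delta^+_s\coloneqq\min_{H\in M_s}d^+_{B_s}(H)$ and $\Delta^-_{s+1}\coloneqq\max_{H'\in M_{s+1}}d^-_{B_s}(H')$, we have $|M_s|/|M_{s+1}|\le\Delta^-_{s+1}/\delta^+_s$.

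First I would bound $\Delta^-_{s+1}$: given $H'\in M_{s+1}$, there are $s+1$ choices of a distinguishable $(v,c,\cP)$-gadget to serve as the canonical gadget of a twist, and then at most $n^{\mathrm{O}(1)}$ (in fact $n^{10}$, matching the ten extra arcs added by the twist, each of a colour determined by the canonical gadget and hence with $n$ choices) ways to reconstruct the pre-image $H$ and twist system; so $\Delta^-_{s+1}\le (s+1)n^{\mathrm{O}(1)}$, the upper bound being all we need. Next I would bound $\delta^+_s$ from below, in the regime $s\le |D|^4/\mathrm{poly}(n)$ where~$(\ref{eq:qhat})$-type reasoning shows $H$ is close to upper-quasirandom, $e_H(W_1,W_2)\le 3|D|^3/n$. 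Here I would construct a large collection $\cT$ of twist systems by radiating outward from $v$: choose colours $d_1\in D_1,\dots,d_4\in D_4$ and arcs $e_i\in E_{d_i}(H)$ one at a time, each time discarding only a small proportion of choices to guarantee (i) all required non-arcs are genuinely absent in $H$ (using quasirandomness to bound codegrees), (ii) the arc deletions of the twist do not decrease $r_{(v,c,\cP)}(H)$ (using that few $c$-arcs are `saturated' and few arcs of each $d_i$ lie in distinguishable gadgets, exactly as in Claims~\ref{d12}--\ref{claime}), (iii) few $c$-loops and few $d_i$-loops cause collisions, and (iv) a `no accidental extra gadget' condition analogous to $(A_{5,6}1)$, $(A_{5,6}2)$. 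This part is essentially the directed analogue of the argument behind~\cite[Lemma 6.8]{GKKO20}; the one genuinely new wrinkle is that one must verify, via a case analysis on how many of the `crossing' arcs of a hypothetical new gadget $B'$ already lie in $H$ (mirroring Claim~\ref{claim:unique}), that the twist creates exactly one new $(v,c,\cP)$-gadget, namely the canonical one — this forces distinguishability and pins down the map $T\mapsto\mathrm{twist}_T(H)$ to be injective on $\cT$ (Claim~\ref{claim:H'}-style). Combining, $\delta^+_s\ge|\cT|\ge n^{\mathrm{O}(1)}\cdot(\text{positive constant})$ dominates $\Delta^-_{s+1}$ for the relevant range of $s$, giving $|M_s|/|M_{s+1}|\le 1/10$ whenever $M_s\ne\emptyset$.

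Finally I would telescope: for $s$ below the threshold, $\prob_D[r_{(v,c,\cP)}(\mathbf H)=s\mid \widetilde{\cQ}_D\cap\cC^c_D]\le (1/10)^{\Omega(|D|^4/n^2)}=\exp(-\Omega(n^2))$ since $|D|=n/10^6$, and summing over the $\mathrm{poly}(n)$ many such $s$ keeps the bound $\exp(-\Omega(n^2))$. This shows that conditionally on $\widetilde{\cQ}_D\cap\cC^c_D$, with probability $1-\exp(-\Omega(n^2))$ there are $\Omega(n^2)$ distinguishable $(v,c,\cP)$-gadgets, and (as in Claim~\ref{claim:spreadgadgets}) distinguishability of a large family forces well-spreadness of the derived collection of $(v,c)$-absorbing gadgets obtained by deleting the arcs coloured in the auxiliary part of the partition; a further union bound over $v\in[n]$ (affordable since each failure probability is $\exp(-\Omega(n^2))$) yields $\cE^c_D$. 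The main obstacle I anticipate is precisely the `unique new gadget' case analysis for the directed twist operation: the arc orientations interact with the colour partition $\cP$ in a way that must be checked carefully to rule out spurious gadgets, and getting the radiating-out choice sequence to simultaneously satisfy all of (i)--(iv) requires balancing many $\mathrm{O}(k^2/n)$-type error terms against the main term — routine but delicate, and the place where the bookkeeping is heaviest.
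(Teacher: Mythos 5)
Your proposal follows essentially the same switching-based route as the paper's appendix proof: fix $\cP$, form the auxiliary bipartite digraphs $B_s$ on consecutive level sets of $r_{(v,c,\cP)}$ within $\widetilde{\cQ}_D\cap\cC^c_D$ (which is closed under the twist operation), bound $\Delta^-_{s+1}$ above and $\delta^+_s$ below by a radiating-out construction of twist systems satisfying non-arc, saturation, collision-avoidance, and no-spurious-gadget constraints, telescope to get an $\exp(-\Omega(n^2))$ tail bound in the regime $s = O(|D|^4/n^2)$, convert a large family of distinguishable $(v,c,\cP)$-gadgets into a well-spread collection of $(v,c)$-absorbing gadgets, and union-bound over $v\in[n]$. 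One small slip worth flagging: the twist operation for a $(v,c,\cP)$-gadget adds only six arcs (not ten), giving $\Delta^-_{s+1}\le (s+1)n^{4}$ rather than $(s+1)n^{10}$, but since you hedged with $n^{O(1)}$ this does not affect the argument.
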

\begin{proof}
It is simple to repurpose the arguments of~\cite[Lemma 6.9]{GKKO20} to show that if $r_{(v,c,\cP)}(\mathbf{H})\geq n^{2}/2^{100}$ for all~$v$ then~$\cE_{D}^{c}$ occurs, whence it suffices to show that
\begin{equation}\label{eq:switch}
\probd{\left(\exists v\in[n]\colon r_{(v,c,\cP)}(\mathbf{H})\leq \frac{n^{2}}{2^{100}}\right)\bigg|\, \widetilde{\cQ}_{D}\cap\cC_{D}^{c}}\leq\exp\left(-\Omega\left(n^{2}\right)\right),
\end{equation}
for an arbitrary fixed equitable partition $\cP=(D_{i})_{i=1}^{4}$.
Fix such a~$\cP$.
We show that~(\ref{eq:switch}) holds by analysing the twist switching operation.
We will ensure that we only perform twists which increase~$r(H)$ (by precisely one). Note that the set~$\widetilde{\cQ}_{D}\cap\cC_{D}^{c}$ is closed under such twists, since we only add six arcs, and we do not add nor delete any arc coloured~$c$.
Let $k\coloneqq|D|=n/10^{6}$, and
fix~$v\in[n]$.
Analyzing auxiliary bipartite graphs~$B_{s}$ analogous to those in the proof of~\cite[Lemma 6.8]{GKKO20}, it is simple to see that $\Delta_{s+1}\leq (s+1)n^{4}$ for all $s\in[nk-1]_{0}$.
We now seek a lower bound for $\delta_{s}$ in the case that $s\leq k^{4}/2^{16}n^{2}$.
To that end, we fix such an~$s$, fix~$H$ in (the analogue of)~$T_{s}^{D}$, and bound from below the degree of~$H$ in~$B_{s}$ by finding many twist systems in~$H$ with desirable properties.
Note that~\cite[Lemma 6.8, Equation (6.2)]{GKKO20} holds as stated, with~$e_{G}(A,B)$ replaced by~$e_{H}(A,B)$.
The remainder of the proof now largely splits into three claims.
\begin{claim}\label{appclaim1}
There is a set $D_{3}^{\text{good}}\subseteq D_{3}$ of size $|D_{3}^{\text{good}}|\geq k/10$ such that for all $d\in D_{3}^{\text{good}}$ we have
\begin{enumerate}[label=\upshape(\roman*)]
\item $|E_{d}(N^{-}_{D_{1}}(x),N^{+}_{D_{2}}(x))|\leq200k^{2}/n$ (in~$H$);
\item there are at most~$64k^{3}/n^{2}$ $d$-arcs~$e$ in~$H$ with the property that~$e$ lies in some distinguishable $(v,c,\cP)$-gadget in~$H$ whose $c$-arc is not supersaturated;
\item there are at most~$n/100$ $d$-loops in~$H$.
\end{enumerate}
\end{claim}
\claimproof{}
We have that~(i) and~(ii) hold for at least~$k/8$ colours $d\in D_{3}$ analogously to~\cite[Lemma 6.8, Claim 1]{GKKO20}, so
it suffices to note that at most~$100$ colours fail condition~(iii).
\endclaimproof{}
\begin{claim}\label{appclaim2}
There is a set $\Lambda\subseteq\{(d_{1},d_{2},d_{3},d_{4},f_{1},f_{2})\colon d_{1}\in D_{1}, d_{2}\in D_{2}, d_{3}\in D_{3}^{\text{good}}, d_{4}\in D_{4}, f_{1},f_{2}\in E_{d_{3}}(H)\}$ of size $|\Lambda|\geq k^{4}n^{2}/10000$ such that for each $(d_{1},d_{2},d_{3},d_{4},f_{1},f_{2})\in\Lambda$ the following holds:
\begin{enumerate}[label=\upshape(\roman*)]
\item $v\notin\{u_{1},u_{2},\dots,u_{14}\}$;
\item $u_{1},u_{2},u_{5},u_{6},u_{7},u_{8},u_{9},u_{10},u_{13},u_{14}$ are distinct;
\item $\{u_{1},u_{2},u_{5},u_{6},u_{7},u_{8},u_{9},u_{10},u_{13},u_{14}\}\cap\{u_{3},u_{4},u_{11},u_{12}\}=\emptyset$,
\end{enumerate}
where  for each $(d_{1},d_{2},d_{3},d_{4},f_{1},f_{2})\in\Lambda$, we set $u_{7}\coloneqq \neigh{+}{d_{4}}{v}, u_{5}\coloneqq\neigh{-}{d_{1}}{u_{7}}, u_{3}\coloneqq\neigh{-}{d_{3}}{u_{5}}, u_{8}\coloneqq\neigh{-}{c}{u_{7}}, u_{6}\coloneqq\neigh{-}{d_{2}}{u_{8}}, u_{4}\coloneqq\neigh{+}{d_{3}}{u_{4}}, u_{2}\coloneqq\vtail{f_{1}}, u_{1}\coloneqq\vhead{f_{1}}, u_{9}\coloneqq\neigh{-}{d_{1}}{v}, u_{11}\coloneqq\neigh{+}{d_{3}}{u_{9}}, u_{10}\coloneqq\neigh{+}{d_{2}}{v}, u_{12}\coloneqq\neigh{-}{d_{3}}{u_{10}}, u_{13}\coloneqq\vtail{f_{2}}, u_{14}\coloneqq\vhead{f_{2}}$.
\end{claim}
\claimproof{}
We define~$R\coloneqq\left\{\neigh{-}{c}{\neigh{+}{d_{4}}{v}}\colon d_{4}\in D_{4}\right\}$. Since~$|R|\leq k$ we have that $e_{H}(N_{D_{2}}^{+}(v),R)\leq 10k^{3}/n$, whence for all $d_{4}\in D_{4}$ but a set~$D_{4}^{\text{bad}}$ of size at most~$k/10$, there are at most $100k^{2}/n$ colours $d_{2}\in D_{2}$ for which there is an arc from~$\neigh{+}{d_{2}}{v}$ to~$\neigh{-}{c}{\neigh{+}{d_{4}}{v}}$.
We now begin construction of the tuples $\lambda\in\Lambda$ by selecting $d_{4}\in D_{4}$ avoiding any colours for which we have a $d_{4}$-loop at~$v$, or $d_{4}\in D_{4}^{\text{bad}}$, or the $c$-arc with head~$N_{d_{4}}^{+}(v)$ is saturated or a loop. Due to our conditioning on~$\cC_{D}^{c}$, we have at least $k/4-1-k/1000-k/16-k/10\geq k/20$ acceptable choices for~$d_{4}$.
Next we choose $d_{1}\in D_{1}$ avoiding the colours of the arcs~$vv$,~$u_{7}u_{7}$,~$vu_{8}$ (if they are present in~$H$) so that we have at least~$k/5$ acceptable choices.
Now we choose $d_{2}\in D_{2}$ avoiding the colours of~$8$ arcs which if chosen would cause us to give the label~$u_{6}$ or~$u_{10}$ to a vertex already labelled, and also avoiding any of the at most~$100k^{2}/n=n/10^{10}$ choices of~$d_{2}$ for which there is an arc from~$u_{10}$ to~$u_{8}$. (This ensures that~$u_{6}$ and~$u_{10}$ are distinct vertices.) There are at least~$k/5$ acceptable choices for~$d_{2}$.
There are at most~$21$ colours $d_{3}\in D_{3}^{\text{good}}$ we must avoid for relabelling reasons. Our choice of~$d_{3}$ may cause $u_{3}=u_{4}$ and/or $u_{11}=u_{12}$, or instead may cause $u_{3}=u_{11}$ and/or $u_{4}=u_{12}$, but this seems difficult to avoid, and does not cause any problems. Thus there are at least~$k/12$ acceptable choices of~$d_{3}\in D_{3}^{\text{good}}$.
We now choose~$f_{1}$ to be a non-loop $d_{3}$-arc (recall condition~(iii) of Claim~\ref{appclaim1}) with neither endvertex labelled so far. 
Choosing~$f_{2}$ similarly, we conclude that $|\Lambda|\geq\frac{k}{20}\cdot\frac{k}{5}\cdot\frac{k}{5}\cdot\frac{k}{12}\cdot\frac{19n}{20}\cdot\frac{19n}{20}\geq\frac{k^{4}n^{2}}{10000}$.
\endclaimproof{}
For each tuple $\lambda=(d_{1},d_{2},d_{3},d_{4},f_{1},f_{2})\in\Lambda$, define~$T_{\lambda}$ to be the subgraph of~$H$ with vertex set $V(T_{\lambda})=\{v,u_{1},u_{2},\dots,u_{14}\}$ and arc set $E(T_{\lambda})=\{u_{2}u_{1},u_{3}u_{5},u_{6}u_{4},u_{5}u_{7},u_{6}u_{8}\}\cup\{u_{8}u_{7},vu_{7},u_{9}v,vu_{10},u_{9}u_{11},u_{12}u_{10},u_{13}u_{14}\}$ (with the vertices~$u_{i}$ defined as in Claim~\ref{appclaim2}).
\begin{claim}\label{appclaim3}
There is a set~$\Lambda^{*}\subseteq\Lambda$ of size at least $|\Lambda|/2$ such that each $\lambda\in\Lambda^{*}$ satisfies the following properties:
\begin{itemize}
\item [$(Q1)$] $T_{\lambda}$ is a twist system of~$H$;
\item [$(Q2)$] deleting the six $d_{3}$-arcs in~$T_{\lambda}$ does not decrease~$r(H)$;
\item [$(Q3)$] the canonical $(v,c,\cP)$-gadget of the twist~$\text{twist}_{T_{\lambda}}(H)$ is distinguishable, and it is the only $(v,c,\cP)$-gadget that is in~$\text{twist}_{T_{\lambda}}(H)$ but not in~$H$.
\end{itemize}
\end{claim}
\claimproof{}
For all $\lambda\in\Lambda$, by the definition of $u_{1},u_{2},\dots,u_{14}$ and the result of Claim~\ref{appclaim2}, we have that~$T_{\lambda}$ satisfies the analogues of~\cite[Definition 6.7(i)--(vi)]{GKKO20}.
Thus, to check that there is a set $\Lambda_{1}\subseteq\Lambda$ of size $|\Lambda_{1}|\geq 9|\Lambda|/10$ such that each $\lambda\in\Lambda_{1}$ satisfies~$(Q1)$, it suffices to check that at most~$|\Lambda|/10$ tuples $\lambda\in\Lambda$ fail to satisfy the analogue of~\cite[Definition 6.7(vii)]{GKKO20}.
To do this, we use the analogue of~\cite[Lemma 6.8, Equation (6.2)]{GKKO20} in the same way as in the proof of~\cite[Lemma 6.8, Claim 2]{GKKO20}, so we omit the details here.

An analogue of~\cite[Lemma 6.8, Claim 3]{GKKO20} (with the same proof) now proves that at most $|\Lambda|/10$ tuples $\lambda\in\Lambda_{1}$ fail to satisfy~$(Q2)$, and analogues of~\cite[Lemma 6.8, Claims~4 and~5]{GKKO20} (with the same proof) address~$(Q3)$. We remark that we do not require an analogue of~\cite[Lemma 6.8, Claim 6]{GKKO20} in this setting.
\endclaimproof{}
Observe that each twist adds only six arcs, and that each $\lambda\in\Lambda^{*}$ produces a unique twist system $T_{\lambda}\subseteq H$.
Thus, from Claims~\ref{appclaim2} and~\ref{appclaim3} we deduce that if $s\leq k^{4}/2^{16}n^{2}$ then $\delta_{s}\geq |\Lambda^{*}|\geq k^{4}n^{2}/20000$, whence either $T_{s}=\emptyset$ or $|T_{s}|/|T_{s+1}|\leq \Delta_{s+1}/\delta_{s}\leq 2/3$, where~$T_{s}$ is the set of $H\in\widetilde{\cQ}_{D}\cap\cC_{D}^{c}$ for which $r(H)=s$.
We now obtain $\probd{r_{(v,c,\cP)}(\mathbf{H})\leq n^{2}/2^{100}\, \big|\, \widetilde{\cQ}_{D}\cap\cC_{D}^{c}}\leq\exp(-\Omega(n^{2}))$ by repurposing the calculations at the end of the proof of~\cite[Lemma 6.8]{GKKO20}.
By a union bound over $v\in[n]$,~(\ref{eq:switch}) holds, which completes the proof of the lemma.
\end{proof}
The following lemma does most of the analogous work to the proof of~\cite[Lemma 3.8]{GKKO20}.
\begin{lemma}\label{lemma:gearup}
Fix $c\in[n]$.
Then we have $\prob{\cE^{c}\mid\cC^{c}}\geq 1-\exp(-\Omega(n^{2}))$.
\end{lemma}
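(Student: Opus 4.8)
The plan is to transfer the conditional statement $\probd{\cE_D^c \mid \widetilde{\cQ}_D \cap \cC_D^c} \geq 1 - \exp(-\Omega(n^2))$ from Lemma~\ref{lemma:switch1} (a statement in the probability space $\cS_D$ of a uniformly random $\mathbf{H} \in \cG_D$ for $D$ of size $n/10^6$) to the desired statement $\prob{\cE^c \mid \cC^c} \geq 1 - \exp(-\Omega(n^2))$ in the space $\cS$ of a uniformly random $\mathbf{G} \in \Phi(\dirK)$, using Proposition~\ref{prop:wf} as the bridge. First I would fix $c \in [n]$ and a set $D \subseteq [n]$ of size $n/10^6$ with $c \in D$, and observe the implication $\cE_D^c \subseteq (\cE^c|_D)$ viewed appropriately: if $\mathbf{G}|_D$ contains a well-spread collection of $n^2/2^{100}$ $(v,c)$-absorbing gadgets for all $v$, then since absorbing gadgets only use a bounded number of arcs, a fortiori $\mathbf{G}$ does too (an absorbing gadget in $\mathbf{G}|_D$ is an absorbing gadget in $\mathbf{G}$). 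So $\prob{\overline{\cE^c} \mid \cC^c} \leq \prob{\overline{(\cE^c|_D)} \mid \cC^c}$.

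The main work is then to relate $\prob{\overline{(\cE^c|_D)} \mid \cC^c}$ to $\probd{\overline{\cE_D^c} \mid \cC_D^c}$ and $\probd{\overline{\widetilde{\cQ}_D}}$. Writing the conditional probability in $\cS$ as a ratio of $\comp(\cdot)$-weighted sums over $\cG_D$ --- concretely, $\prob{\overline{(\cE^c|_D)} \mid \cC^c} = \left(\sum_{H \in \overline{\cE_D^c}\cap\cC_D^c}\comp(H)\right) \big/ \left(\sum_{H \in \cC_D^c}\comp(H)\right)$, where I use that the events $(\cE^c|_D)$ and $\cC^c$ depend on $\mathbf{G}$ only through $\mathbf{G}|_D$, and that conditioning on $\cC^c$ corresponds to restricting the $\cG_D$-sum to $\cC_D^c$ --- Proposition~\ref{prop:wf} gives $\comp(H)/\comp(H') \leq \exp(O(n\log^2 n))$ for all $H, H' \in \cG_D$, hence
\[
\prob{\overline{(\cE^c|_D)} \mid \cC^c} \leq \frac{|\overline{\cE_D^c}\cap\cC_D^c|}{|\cC_D^c|}\cdot\exp(O(n\log^2 n)) = \probd{\overline{\cE_D^c} \mid \cC_D^c}\cdot\exp(O(n\log^2 n)).
\]
It remains to bound $\probd{\overline{\cE_D^c}\mid \cC_D^c}$. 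Here I would uncondition Lemma~\ref{lemma:switch1} from $\widetilde{\cQ}_D$: by the law of total probability, $\probd{\overline{\cE_D^c}\mid\cC_D^c} \leq \probd{\overline{\cE_D^c}\mid\widetilde{\cQ}_D\cap\cC_D^c} + \probd{\overline{\widetilde{\cQ}_D}\mid\cC_D^c}$. The first term is $\exp(-\Omega(n^2))$ by Lemma~\ref{lemma:switch1}. For the second, since $\widetilde{\cQ}_D$ is a reformulation of upper-quasirandomness with $\cQ_D^1 \subseteq \widetilde{\cQ}_D$ (as noted in the text preceding Lemma~\ref{lemma:switch1}), Lemma~\ref{quasi-lemma} applies: conditioning on any outcome $F$ of $\mathbf{F}_c$ --- in particular on outcomes compatible with $\cC_D^c$, i.e.\ with at most $n/10^9$ loops --- gives $\probd{\mathbf{H}\in\cQ_D^1\mid\mathbf{F}_c=F}\geq 1-\exp(-\Omega(n^2))$, and summing over such $F$ weighted by $\probd{\mathbf{F}_c=F\mid\cC_D^c}$ yields $\probd{\overline{\widetilde{\cQ}_D}\mid\cC_D^c}\leq\probd{\overline{\cQ_D^1}\mid\cC_D^c}\leq\exp(-\Omega(n^2))$. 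Combining, $\probd{\overline{\cE_D^c}\mid\cC_D^c}\leq\exp(-\Omega(n^2))$, and since $\exp(-\Omega(n^2))\cdot\exp(O(n\log^2 n)) = \exp(-\Omega(n^2))$, we conclude $\prob{\overline{\cE^c}\mid\cC^c}\leq\exp(-\Omega(n^2))$, as desired.

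The step I expect to be the main obstacle --- or rather the one requiring the most care --- is the bookkeeping in passing between the two probability spaces while respecting the conditioning on the loop counts: one must check that $\cC^c$ (an event in $\cS$) genuinely corresponds to restricting the $\cG_D$-sum to $\cC_D^c$ (which holds because loops of colour $c$ all lie in $\mathbf{G}|_D$ when $c \in D$, so $\cC^c$ is $\mathbf{G}|_D$-measurable), and that the $\comp$-ratio bound of Proposition~\ref{prop:wf} is applied to the numerator and denominator over the \emph{same} index set $\cC_D^c$ so the $\exp(O(n\log^2 n))$ factor does not accumulate. Everything else is a routine chaining of Lemmas~\ref{quasi-lemma},~\ref{lemma:switch1} and Proposition~\ref{prop:wf}; no new combinatorial ideas are needed beyond those already developed, since Lemma~\ref{masterswitch1} itself then follows from Lemma~\ref{lemma:gearup} by a union bound over $c \in [n]$ together with Lemma~\ref{fewloops} to remove the conditioning on $\cC$.
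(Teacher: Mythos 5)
Your proposal is correct and follows essentially the same route as the paper: fix $D\ni c$, bound $\probd{\overline{\widetilde{\cQ}_D}\mid\cC_D^c}$ via Lemma~\ref{quasi-lemma} and $\cQ_D^1\subseteq\widetilde{\cQ}_D$, uncondition Lemma~\ref{lemma:switch1} by the law of total probability to get $\probd{\overline{\cE_D^c}\mid\cC_D^c}\leq\exp(-\Omega(n^2))$, and then transfer to $\cS$ via the $\comp$-weighted ratio and Proposition~\ref{prop:wf} (the paper performs the transfer last; you do it first, but this is only a reordering). Your observation that $\cC^c$ is $\mathbf{G}|_D$-measurable precisely because $c\in D$, so the conditioning passes cleanly through the two probability spaces, is indeed the point that needs to be (and is) respected.
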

\begin{proof}
Arbitrarily select $D\subseteq[n]$ such that $c\in D$ and $|D|=n/10^{6}$.
Define~$\cF$ to be the collection of all sets~$F$ of~$n$ arcs of~$\dirK$ such that every vertex of~$\dirK$ is the head of precisely one arc in~$F$ and the tail of precisely one arc in~$F$, and define~$\cF^{\text{good}}\subseteq\cF$ to be the set of $F\in\cF$ such that~$F$ contains at most~$n/10^{9}$ loops.
Note that
\begin{equation}\label{eq:q}
\probd{\overline{\widetilde{\cQ}_{D}}\mid\cC_{D}^{c}} \leq \probd{\overline{\cQ_{D}^{1}}\mid\cC_{D}^{c}}=\sum_{F\in\cF^{\text{good}}}\probd{\mathbf{F}_{c}=F\mid\cC_{D}^{c}}\probd{\overline{\cQ_{D}^{1}}\mid\mathbf{F}_{c}=F} \leq \exp(-\Omega(n^{2})),
\end{equation}
where we have used Lemma~\ref{quasi-lemma} and the fact that $\cQ_{D}^{1}\subseteq\widetilde{\cQ}_{D}$.
Now using Lemma~\ref{lemma:switch1},~(\ref{eq:q}), and the law of total probability, we obtain that
\begin{equation}\label{eq:probofgadg}
\probd{\overline{\cE_{D}^{c}}\mid\cC_{D}^{c}}\leq\probd{\overline{\cE_{D}^{c}}\mid\widetilde{\cQ}_{D}\cap\cC_{D}^{c}}+\probd{\overline{\widetilde{\cQ}_{D}}\mid \cC_{D}^{c}}\leq\exp(-\Omega(n^{2})).
\end{equation}
By Proposition~\ref{prop:wf} and~(\ref{eq:probofgadg}), we obtain
\begin{eqnarray*}
\prob{\overline{\cE^{c}}\mid\cC^{c}} & \leq & \prob{\overline{\left(\cE^{c}|_{D}\right)}\mid\cC^{c}}=\frac{\sum_{H\in\overline{\cE_{D}^{c}}\cap\cC_{D}^{c}}\text{comp}(H)}{\sum_{H'\in\cC_{D}^{c}}\text{comp}(H')}\leq\probd{\overline{\cE_{D}^{c}}\mid\cC_{D}^{c}}\cdot\exp\left(O\left(n\log^{2}n\right)\right)\\ & \leq & \exp\left(-\Omega\left(n^{2}\right)\right),
\end{eqnarray*}
completing the proof of the lemma.
\end{proof}
Lemma~\ref{masterswitch1} now follows very quickly from Lemmas~\ref{lemma:switch1} and~\ref{lemma:gearup}.
\lateproof{Lemma~\ref{masterswitch1}}
For fixed $c\in[n]$, we can use Lemma~\ref{lemma:gearup} to see that
\[
\prob{\overline{\cE^{c}}\mid\cC}=\frac{|\overline{\cE^{c}}\cap\cC|}{|\cC|}\leq\frac{|\overline{\cE^{c}}\cap\cC^{c}|}{|\cC^{c}|}\cdot\frac{|\cC^{c}|}{|\cC|}\leq2\prob{\overline{\cE^{c}}\mid\cC^{c}}\leq\exp\left(-\Omega\left(n^{2}\right)\right),
\]
where we have used $|\cC^{c}|\leq|\Phi(\dirK)|$ and $|\cC|\geq|\Phi(\dirK)|/2$, the latter following from Lemma~\ref{fewloops}.
A union bound over $c\in[n]$ now gives that $\prob{\overline{\cE}\mid\cC}\leq\exp\left(-\Omega\left(n^{2}\right)\right)$, as desired.
The final claim in the statement now follows by applying Lemma~\ref{fewloops} with $t=n/10^{9}$, and using the law of total probability.\COMMENT{$\prob{\overline{\cE}}=\prob{\cC}\prob{\overline{\cE}\mid\cC}+\prob{\overline{\cC}}\prob{\overline{\cE}\mid\overline{\cC}} \leq\prob{\overline{\cE}\mid\cC}+\prob{\overline{\cC}}\leq\exp\left(-\Omega\left(n^{2}\right)\right)+\exp\left(-\Omega\left(n\log n\right)\right)=\exp\left(-\Omega\left(n\log n\right)\right).$}
\endproof
}
\vspace{20mm}
\end{document}